\DeclareFontFamily{U}{BOONDOX-calo}{\skewchar\font=45 }
\DeclareFontShape{U}{BOONDOX-calo}{m}{n}{
  <-> s*[1.05] BOONDOX-r-calo}{}
\DeclareFontShape{U}{BOONDOX-calo}{b}{n}{
  <-> s*[1.05] BOONDOX-b-calo}{}
\DeclareMathAlphabet{\mathcalb}{U}{BOONDOX-calo}{m}{n}
\SetMathAlphabet{\mathcalb}{bold}{U}{BOONDOX-calo}{b}{n}
\DeclareMathAlphabet{\mathbcalb}{U}{BOONDOX-calo}{b}{n}
\DeclareFontFamily{OT1}{pzc}{}
\DeclareFontShape{OT1}{pzc}{m}{it}{<-> s * [1.10] pzcmi7t}{}
\DeclareMathAlphabet{\mathcalc}{OT1}{pzc}{m}{it}
\newtheorem{theorem}{Theorem}
\newtheorem{lemma}[theorem]{Lemma}
\newtheorem{proposition}[theorem]{Proposition}
\newtheorem{corollary}[theorem]{Corollary}
\newtheorem{remark}[theorem]{Remark}
\newtheorem{fact}[theorem]{Fact}
\newcommand{\ind}{{\mathbf 1}}
\newcommand{\E}{{\mathbb E}}
\newcommand{\Z}{{\mathbb Z}}
\newcommand{\R}{{\mathbb R}}
\newcommand{\N}{{\mathbb N}}
\newcommand{\p}{{\mathbb P}}
\newcommand{\bL}{{\mathbb L}}
\newcommand{\calM}{\mathcal{M}}
\newcommand{\calF}{{\mathcal{F}}}
\newcommand{\bbP}{\mathbb{P}}
\newcommand{\bbE}{\mathbb{E}}
\newcommand{\bbL}{\mathbb{L}}
\newcommand{\bbN}{\mathbb{N}}
\newcommand{\bbR}{\mathbb{R}}
\renewcommand{\ell}{\mathcalb{l}}
\newcommand{\gG}{\Gamma}
\newcommand{\gd}{\delta}
\newcommand{\dd}{\text{\rm d}}
\newcommand{\lar}{\vartriangleleft}
\newcommand{\rar}{\vartriangleright}
\newcommand{\logZ}{\mathbf{z}}
\newcommand{\bfz}{\mathbf{z}}
\newcommand{\gz}{\zeta}
\newcommand{\ttt}{\mathtt{t}}
\newcommand{\tu}{\mathtt{u}}
\newcommand{\tuplus}{\mathtt{u}^+}
\newcommand{\lbra}{\llbracket}
\newcommand{\rbra}{\rrbracket}
\newcommand{\bbZ}{{\ensuremath{\mathbb Z}} }
\title{Further results on the free energy of the random field Ising chain in the case of centered disorder}
\author{Orph\'ee Collin$^{1}$}
\begin{document}

\maketitle

{\footnotesize 
\noindent $^{~1}$Technische Universit\"at Wien, Austria
\\
\noindent  e-mail:
\texttt{orphee.collin@normalesup.org}
}

\begin{abstract}
	We study the expansion of the limiting free energy density of the random field Ising chain with centered IID disorder and homogeneous coupling parameter, when the latter goes to infinity. We extend the first order result of \cite{C25} to the general case of finite second moment. Furthermore, we identify the value of the unknown constant appearing in the main theorem of \cite{CGGH25}, which is the first non-universal constant in the expansion of the limiting free energy density.
\\[.3cm]\textbf{AMS subject classifications (2010 MSC):}
34D08, 
60K37, 
82B44, 
60K35. 
\\[.3cm]\textbf{Keywords:} random field Ising chain, disordered systems,  random matrix products
\\[.3cm] This work is licensed under a Creative Commons Attribution | 4.0 International licence (CC-BY 4.0, https://creativecommons.org/licenses/by/4.0/).
\end{abstract}

\section{Introduction}\label{sec:introduction}


\subsection{Overview}



Let  $\mu$ be a law on $\bbR$ such that
\begin{equation}\label{eq:assump_mu_initial}
	\vartheta^2:= \int_\R x^2 \mu(\dd x) \in (0, \infty)\, , \qquad \int_\R x \mu(\dd x)=0 \, ,
\end{equation}
i.e., $\mu$ has a finite second moment, is centered and different from $\delta_0$, and its variance is denoted $\vartheta^2$.
We consider, under a probability measure denoted $\bbP$, an i.i.d.~sequence $h=(h_n)_{n\in\bbN}$ of real random variables following law $\mu$. We further consider a real positive parameter $J$.

The object of this article is to study, as $N$ goes to infinity, the product $M_1\dots M_N$ of the random matrices
\begin{equation}\label{eq:defM_n}
  M_n=
  \begin{pmatrix}
    e^{h_n} & e^{-2J+h_n}\\
    e^{-2J-h_n} & e^{-h_n}
  \end{pmatrix}
  \, , \qquad  n=1,2, \dots
\end{equation}
Following Furstenberg's theory (as will be detailed in Section \ref{sec:definitions}), $\bbP$-almost surely, the four coefficients of the matrix  $M_1\dots M_N$ all have the same exponential growth rate as $N$ goes to infinity, this rate is deterministic, it is called the Lyapunov exponent of this random matrix product and we will denote it by $\calF(J)$.


\smallskip 

Our first main result (Corollary \ref{th:cor_first_order}) states that under assumption \eqref{eq:assump_mu_initial}, as $J\to\infty$, 
\begin{equation}
\calF(J) \sim \frac{\vartheta^2}{2J}\, .
\end{equation}

In \cite{CGGH25} it was shown under some regularity assumption and under stronger integrability assumption on the law $\mu$ that the Lyapunov exponent $\calF(J)$ has the following asymptotic expansion as $J\to \infty$:
\begin{equation}\label{eq:thCGH25_simp}
\calF(J) 
= \frac{\vartheta^2}{2J+ \kappa} 
+ R(J)\, , 
\end{equation}
where the decay of the rest $R(J)$ depends on the integrability assumption on $\mu$: see Section \ref{sec:theorems} for a detailed statement.
The expression given in \cite{CGGH25} for the constant $\kappa$ is not explicit.
The second main contribution of the present work (Corollary \ref{th:cor_dev_free_ener}) is to provide a more explicit expression for $\kappa$. This will be done by establishing an expansion up to order $1/J^2$ of $\calF(J)$, as $J\to\infty$.
Our technique does not go beyond the order $1/J^2$, so it does not enlighten the remarkable simplicity of the expansion in \eqref{eq:thCGH25_simp}.

Our approach relies on the physical origins of the problem. The random matrix product indeed expresses the partition function of the random field Ising chain (RFIC) and the Lyapunov exponent $\calF(J)$ can be identified as the limiting free energy density of the RFIC.

We will proceed in two steps. First we will estimate the limiting maximal energy density $\mathcal{M}(J)$ of the RFIC (defined in Lemma \ref{th:def_free_ener} below). Then we will relate the limiting free energy density $\calF(J)$ with the limiting maximal energy density $\mathcal{M}(J)$: inspired by the description of the typical configurations given for example in \cite{CGH25+}, we will estimate the partition function of the RFIC by considering the contributions given by those configurations which have an energy close to the maximum.

\subsection{Definitions and basic facts}\label{sec:definitions}

For a given realisation of the sequence $h=(h_1, \dots, h_N)$, for every positive integer $N$, every positive real number $J$, and every pair $(g, d) \in\{+1, -1\}^2$, the Ising model on the chain $\{1, \dots, N\}$, with coupling constant $J$, external field $h$, and boundary conditions $(g,d)$ is usually defined by the following Hamiltonian:
\begin{equation}
\begin{aligned}
\{+1, -1\}^N & \to \R\\
\sigma=(\sigma_1, \dots, \sigma_N) & \mapsto J \sum_{n=0}^{N} \sigma_n\sigma_{n+1} + \sum_{n=1}^N h_n \sigma_n
\end{aligned}
\end{equation}
with the convention $\sigma_0=g, \sigma_{N+1}=d$.

We decide to subtract $(N+1)J$ to the Hamiltonian so, using the relation $\sigma_n \sigma_{n+1} = 1 - 2 \ind_{\sigma_n \neq \sigma_{n+1}}$, we rather consider the following Hamiltonian:
\begin{equation}
\begin{aligned}
H_{N, J, h}^{gd}:  \qquad \qquad \{+1, -1\}^N & \to \R\\
\sigma=(\sigma_1, \dots, \sigma_N) & \mapsto  -2J \sum_{n=0}^{N} \ind_{\sigma_n \neq  \sigma_{n+1}} + \sum_{n=1}^N h_n \sigma_n
\end{aligned}
\end{equation}
still with the convention $\sigma_0=g, \sigma_{N+1}=d$.

Each \emph{configuration} $\sigma\in \{+1, -1\}^N$ of \emph{spins} $-1$ or $+1$ on the \emph{system} $\{1, \dots, N\}$ is thus assigned a value $H_{N, J, h}^{gd}(\sigma)$, which we will also refer to as its \emph{energy}.
The partition function associated with this Hamiltonian is:
\begin{equation}\label{eq:def_parti_func}
Z_{N, J, h}^{gd}= \sum_{\sigma\in  \{+1, -1\}^N } \exp\left( H_{N, J, h}^{gd}(\sigma)\right)\, .
\end{equation}

A central object of study in statistical physics is the limiting free energy density associated with a Hamiltonian. Let us introduce it by the following lemma, as well as another quantity of interest for us: the limiting maximal energy density. 

For a given realisation of the sequence $h=(h_1, \dots, h_N)$, for every positive integer $N$, every positive real number $J$, and every pair $(g, d) \in\{+1, -1\}^2$, we consider the maximum of the Hamiltonian (or maximal energy):
\begin{equation}
\begin{aligned}
M_{N, J, h}^{gd} := \sup_{\sigma \in \{+1, -1\}^N } M_{N, J, h}^{gd} (\sigma).
\end{aligned}
\end{equation}

For the lemma, we only require that $\mu$ (sometimes refered to as the disorder law) admits a finite first moment.

\begin{lemma}\label{th:def_free_ener}
Assume that $\mu$ has a finite first moment. For fixed $J\in(0, \infty)$ and $(g,d)\in\{+1, -1\}^2$, for almost every realization of the disorder sequence $h$ (i.e., $\bbP$-almost surely), there is convergence as $N\to\infty$ of the free energy density
\begin{equation}
\frac{1}{N} \log \left( Z_{N, J, h}^{gd} \right)
\end{equation}
and of the maximal energy density
\begin{equation}
\frac{M_{N, J, h}^{gd}}{N} 
\end{equation}
towards deterministic quantities denoted respectively $\calF(J)$ and  $\calM(J)$, which depend on the law $\mu$, the coupling intensity $J$, but not on the boundary conditions $(g,d)$. The convergences also hold in $\bL^1$.
\smallskip 
\end{lemma}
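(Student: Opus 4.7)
Both convergences reduce to classical ergodic theorems applied to the i.i.d.\ disorder $h$: the free energy density is handled via the transfer-matrix representation and Furstenberg's theorem (as already alluded to in the introduction), while the maximal energy density is handled via Kingman's subadditive ergodic theorem applied to a max-plus analogue of the same transfer matrix.

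For the free energy, a direct transfer-matrix computation gives
\begin{equation*}
Z_{N,J,h}^{gd}=(M_1\cdots M_N)(g,d)+e^{-2J}(M_1\cdots M_N)(-g,d),
\end{equation*}
where the $M_n$ are the positive random matrices of \eqref{eq:defM_n}. Since the $M_n$ are i.i.d.\ with positive entries, Furstenberg's theorem guarantees that all four entries of $M_1\cdots M_N$ grow exponentially at the same deterministic rate, the top Lyapunov exponent $\calF(J)$, and hence $\frac{1}{N}\log Z_{N,J,h}^{gd}\to\calF(J)$ $\bbP$-a.s., independently of $(g,d)$. The $L^1$ convergence follows from the pointwise bound
\begin{equation*}
\Bigl|\tfrac{1}{N}\log Z_{N,J,h}^{gd}\Bigr|\;\leq\;\tfrac{2J}{N}+\log 2+\tfrac{1}{N}\sum_{n=1}^N|h_n|,
\end{equation*}
whose right-hand side is uniformly integrable under the finite first moment assumption on $\mu$; the upper bound comes from $Z_{N,J,h}^{gd}\leq 2^N e^{\sum|h_n|}$, the lower bound from evaluating the weight of a single constant-spin configuration.

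For the maximal energy, I would introduce the max-plus transfer matrices $T_n(a,b):=h_n a-2J\,\ind_{a\neq b}$ for $a,b\in\{-1,+1\}$, so that $M_{N,J,h}^{gd}$ equals the $(g,d)$-entry of $T_1\otimes\cdots\otimes T_N$ (in the max-plus sense) up to a bounded boundary correction. Splitting the maximisation at an intermediate site yields, for any fixed spin $\sigma'\in\{-1,+1\}$,
\begin{equation*}
M_{N+M,J,h}^{gd}\;\geq\;M_{N,J,h}^{g\sigma'}+M_{M,J,\theta^N h}^{\sigma' d}-2J,
\end{equation*}
with a matching upper bound obtained by taking $\sigma'$ equal to the optimal intermediate spin. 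This near-additivity, combined with the i.i.d.\ (hence ergodic) structure of $h$, places us in the setting of Kingman's subadditive ergodic theorem (after adding a suitable linear drift to restore exact subadditivity), yielding $\bbP$-a.s.\ and $L^1$ convergence of $M_{N,J,h}^{gd}/N$ to a deterministic limit $\calM(J)$; independence of $(g,d)$ again follows from the $O(J)$ bound on the effect of changing boundary conditions.

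\textbf{Main obstacle.} The only delicate point is to control the additive or multiplicative $O(J)$ boundary corrections in both formulations and to establish the uniform integrability needed for $L^1$ convergence; both reduce to the assumed finite first moment of $\mu$, via the strong law of large numbers for $\frac{1}{N}\sum_{n=1}^N|h_n|$. No deeper difficulty is expected here, as this lemma serves only to introduce $\calF(J)$ and $\calM(J)$ as the well-defined quantities estimated by the main results.
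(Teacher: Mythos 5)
Your proposal is correct, but it takes a partially different route than the paper. The paper treats both quantities uniformly: it checks the near-superadditivity $Z^{++}_{N+M,h}\ge Z^{++}_{N,h}\,e^{-4J}\,Z^{++}_{M,\Theta^N h}$ and $M^{++}_{N+M,h}\ge M^{++}_{N,h}-4J+M^{++}_{M,\Theta^N h}$ and applies Kingman's subadditive ergodic theorem to $\log(Z^{++}_{N,h}e^{-4J})$ and to $M^{++}_{N,h}-4J$; since Kingman's theorem delivers both the almost-sure and the $\bbL^1$ convergence in one shot, no separate uniform integrability argument is needed. Other boundary conditions are then reduced to $++$ via the pointwise sandwich $H^{++}_{N,J,h}(\sigma)-4J\le H^{gd}_{N,J,h}(\sigma)\le H^{++}_{N,J,h}(\sigma)+4J$. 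You instead invoke Furstenberg's theory for $\log Z$ and Kingman only for $M$; this is fine (Furstenberg's positivity argument does apply here under $\E[\log^+\|M_1\|]<\infty$, which follows from the finite first moment), but it forces you to establish $\bbL^1$ convergence separately through the uniform integrability of $\frac{1}{N}\sum_{n=1}^N|h_n|$, which the paper's Kingman-only route avoids. Note also that the paper deliberately keeps Furstenberg out of the proof of the lemma and connects $\calF(J)$ to the Lyapunov exponent only in the discussion that follows, precisely because Kingman suffices and is more elementary; if one wanted a completely unified argument, observe that your two near-additivity decompositions at an intermediate site are really the same estimate, and applying Kingman to $\log Z$ is no harder than applying it to $M$.

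Two small inaccuracies that do not affect the argument: the constant in your pointwise bound should be $4J/N$ rather than $2J/N$ (the all-plus configuration can incur a $-2J$ term at each of the two boundaries), and your superadditivity statement $M^{gd}_{N+M}\ge M^{g\sigma'}_N+M^{\sigma'd}_M-2J$ lets the intermediate boundary spin $\sigma'$ vary, which is not quite the fixed-boundary setting Kingman needs; you should freeze $\sigma'=+$ (and $g=d=+$) as the paper does, and then compare boundary conditions at the end.
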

The quantity $\calF(J)$ is called the limiting free energy density, we will sometimes call it simply the free energy. 
We call the quantity $\mathcal{M}(J)$ the limiting maximal energy density.

\begin{proof} We work with fixed $J\in(0, \infty)$ and start by treating the case of boundary conditions $++$. In this proof, we use the abbreviations: $Z_{N, h}=Z_{N, J, h}^{++}$ and $M_{N,h}=M_{N, J, h}^{++}$. We will use Kingman's subadditive ergodic theorem. Indeed, the application
\begin{equation}
\begin{aligned}
\Theta : \R^\N   & \to \R^\N \, , \\
h=(h_n)_{n\in\N} & \mapsto \Theta h = (h_{n+1})_{n\in\N}\, ,
\end{aligned}
\end{equation}
and the law $\mu^{\otimes \N}$ define an ergodic system and, for every positive integers $N$ and $M$, using the lower bound
\begin{equation}\label{eq:subadd_H}
  \begin{aligned}
    H_{N+M, J, h}^{++} & ((\sigma_{1}, \dots, \sigma_{N+M}))  \\
    & \ge H_{N, J, h}^{++}((\sigma_1, \dots, \sigma_N)) - 4J + H_{M, J, \Theta^N h}^{++}((\sigma_{N+1}, \dots, \sigma_{N+M}))\, ,
  \end{aligned}
\end{equation}
we have
\begin{equation}
Z_{N+M, h}\ge Z_{N, h} e^{-4J} Z_{M, \Theta^N h}  \, ,
\end{equation}
and 
\begin{equation}
M_{N+M, h}\ge  M_{N, h} -4J + M_{M,  \Theta^N h}  \, .
\end{equation}
Kingman's subadditive ergodic theorem then gives the almost sure and $\bL^1$ convergence of
\begin{equation}
\frac{1}{N} \log \left( Z_{N, h} e^{-4J}\right)
\end{equation}
and of 
\begin{equation}
\frac{M_{N, h}-4J}{N} 
\end{equation}
towards deterministic quantities. This yields the lemma in the case of boundary conditions ++.

To treat other boundary conditions $(g,d)\in \{+1, -1\}^2$, observe that for every configuration $\sigma\in \{+1, -1\}^N$
\begin{equation}\label{eq:encadr_Hgd}
H_{N, J, h}^{++} (\sigma) - 4J \le H_{N, J, h}^{gd} (\sigma) \le H_{N, J, h}^{++} (\sigma) + 4J \, , 
\end{equation}
so that
\begin{equation}
e^{- 4J} Z_{N, J, h}^{++} \le Z_{N, J, h}^{gd} \le e^{4J} Z_{N, J, h}^{++}\, ,
\end{equation}
and 
\begin{equation}
 M_{N, J, h}^{++} - 4J 
 \le M_{N, J, h}^{gd} 
 \le M_{N, J, h}^{++} + 4J \, .
\end{equation}
\end{proof}

\bigskip

Let us now relate the partition function and the free energy of the RFIC with the random matrix product in \eqref{eq:defM_n}. We introduce the matrices
\begin{equation}\label{eq:intro1_en:def_matrices}
Q_J :=
\begin{pmatrix}
1 & e^{-2J}\\
e^{-2J} & 1
\end{pmatrix} \, ,
\qquad
T_x :=
\begin{pmatrix}
e^{x} & 0\\
0 & e^{-x}
\end{pmatrix} \, ,  x\in\R
\end{equation}
and view them as indexed by $\{+1, -1\}$. Then the partition function $Z_{N, J, h}^{gd}$ is equal to the coefficient indexed by $(g, d)$ of the matrix product:
\begin{equation}\label{eq:matrixproduct_complete}
Q_J T_{h_1} Q_J T_{h_2} Q_J \dots Q_J T_{h_n} Q_J\, .
\end{equation}
Indeed, when computing the $(g,d)$-coefficient, one has to take into account the contributions to the matrix product of all the paths of length $2N+1$ with values in $\{+1, -1\}$ starting at $g$ and ending at $d$. When going through a matrix of the type $T_x$, the path should actually not switch (otherwise it has weight 0), and gains a factor $e^{x}$ or $e^{-x}$ depending on its value. When going trough a matrix $Q_J$, the path can switch to opposite value and then it gets a factor $e^{-2J}$. This yields exactly $Z_{N, J, h}^{gd}$.

We deduce that $\calF(J)$ is, almost surely, the exponential growth rate of the coefficients of the random matrix product in \eqref{eq:matrixproduct_complete}.
Removing the left-most matrix $Q_J$ there does not change that fact and this yields that $\calF(J)$ is also the almost sure exponential growth rate of the coefficients of $M_1\dots M_N$ with
\begin{equation}
M_n :=   T_{h_n} Q_J
=
\begin{pmatrix}
e^{h_n} & e^{-2J+h_n}\\
e^{-2J-h_n} & e^{-h_n}
\end{pmatrix} \, , \qquad n\ge 1\, .
\end{equation}
Hence, $\calF(J)$ is the so-called Lyapunov exponent of the product $M_1\dots M_N$.

\begin{remark}
  For completeness, let us present the definition of the Lyapunov exponent in a general setup. If $(M_n)_{n\in\N}$ is a random sequence of $n\times n$ matrices, we consider the following limit, if it exists:
  \begin{equation}
    \lim_{N\to\infty} \log |M_1\dots M_N | 
  \end{equation}
  where $|\cdot|$ is any norm on $\calM_{n}(\R)$; the limit being independent of the choice of the norm, due to the equivalence of norms in finite dimension. Choosing a submultiplicative norm, we have for every positive integers $N$ and $M$, 
  \begin{equation}
     \log |M_1\dots M_{N+M} | \le  \log |M_1\dots M_N | +  \log |M_{N+1}\dots M_{N+M} |  \, .
  \end{equation}
  Hence, if the sequence $(M_n)_{n\in\N}$ is i.i.d. and if $(\log|M_1|)_+$ has a finite expectation, then Kingman's subadditive theorem yields that the above limit almost surely exists, is deterministic, and that the convergence holds also in $\bL^1$.
  
  If furthermore all the coefficients of $M_1$ are almost surely positive, then Furstenberg's theory yields that all the coefficients of $M_1\dots M_N$ have the same exponential growth rate, equal to the Lyapunov exponent. 
\end{remark}



\subsection{Theorems}\label{sec:theorems}

\subsubsection{Results concerning the limiting maximal energy density}

\begin{theorem}\label{th:first_order_calM}
	Assume \eqref{eq:assump_mu_initial}. Then, as $J\to\infty$,
	\begin{equation}
		\calM(J) \sim \frac{\vartheta^2}{2J} \, .
	\end{equation}
\end{theorem}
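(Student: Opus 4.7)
The plan is to reformulate $\calM(J)$ via a dynamic programming argument that naturally yields a reflected random walk, and then to extract the asymptotic from a second-moment identity in stationarity. For each $n$, introduce
\begin{equation*}
V_n^{\pm}:=\max_{\sigma_1,\ldots,\sigma_n:\,\sigma_n=\pm 1}\Bigl(-2J\sum_{k=1}^{n-1}\ind_{\sigma_k\neq\sigma_{k+1}}+\sum_{k=1}^{n}h_k\sigma_k\Bigr),
\end{equation*}
with "free" left boundary ($V_0^{+}=V_0^{-}=0$), the boundary contribution being uniformly $O(J)$ and hence irrelevant for the density. The one-step Bellman recursion $V_{n+1}^{+}=\max(V_n^{+},V_n^{-}-2J)+h_{n+1}$ (and symmetrically for $V_{n+1}^-$) shows that the gap $W_n:=V_n^{+}-V_n^{-}$ evolves as a reflected random walk on $[-2J,2J]$,
\begin{equation*}
W_{n+1}=\Pi_{[-2J,2J]}(W_n)+2h_{n+1},
\end{equation*}
while the sum $V_n^{+}+V_n^{-}$ is non-decreasing with increment at step $n$ equal to the overshoot $(|W_n|-2J)_+$. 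Since $\max(V_N^{+},V_N^{-})=(V_N^{+}+V_N^{-})/2+|W_N|/2$ and $|W_N|\leq 2J+O(|h_N|)$ is bounded in $N$, Lemma~\ref{th:def_free_ener} combined with the ergodic theorem applied to the positive-recurrent chain $(W_n)$ yields
\begin{equation*}
\calM(J)=\tfrac{1}{2}\,\bbE_{\pi_J}\bigl[(|W|-2J)_+\bigr],
\end{equation*}
where $\pi_J$ is the unique invariant law of $(W_n)$.

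Next, stationarity of $\bbE_{\pi_J}[W^2]$ under $W\mapsto\Pi(W)+2h$, together with $h$ centred and independent of $\Pi(W)$, gives $\bbE_{\pi_J}[W^2]=\bbE_{\pi_J}[\Pi(W)^2]+4\vartheta^2$, i.e.
\begin{equation*}
\bbE_{\pi_J}\bigl[(|W|-2J)(|W|+2J)\,\ind_{\{|W|>2J\}}\bigr]=4\vartheta^2.
\end{equation*}
Using $|W|+2J\geq 4J$ on the integrand's support produces the upper bound $\calM(J)\leq\vartheta^2/(2J)$ for free. Setting $L:=W-\Pi(W)$, so that $|L|=(|W|-2J)_+$ and crucially $|L|\leq 2|h_{\mathrm{current}}|$ by construction, the identity rearranges to $4\vartheta^2=4J\,\bbE_{\pi_J}[|L|]+\bbE_{\pi_J}[L^2]$ and hence
\begin{equation*}
\calM(J)=\frac{\vartheta^2}{2J}-\frac{\bbE_{\pi_J}[L^2]}{8J},
\end{equation*}
so the matching lower bound reduces to proving $\bbE_{\pi_J}[L^2]\to 0$ as $J\to\infty$.

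I expect this last step to be the main obstacle. The natural route is to split, for any truncation level $K>0$,
\begin{equation*}
\bbE_{\pi_J}[L^2]\leq 4\,\bbE\bigl[h^2\ind_{\{|h|>K\}}\bigr]+4K^2\,\pi_J\bigl(\{|\Pi(W)|>2J-2K\}\bigr),
\end{equation*}
then send $J\to\infty$ followed by $K\to\infty$: the first term vanishes in the latter limit thanks to the finite second moment of $\mu$, while for the second term one needs, for each fixed $K$, that the $\pi_J$-mass of a boundary strip of fixed width $2K$ around $\pm 2J$ tends to zero as $J\to\infty$. Morally this is the diffusive convergence of the rescaled reflected chain $\Pi(W)/(2J)$ to reflected Brownian motion on $[-1,1]$, whose stationary law is uniform; making it rigorous under only the finite-variance assumption on $\mu$ is where the real technical work lies, likely requiring a Lyapunov-type concentration estimate or a coupling argument tailored to this reflected walk.
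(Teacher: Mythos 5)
Your approach is genuinely different from the paper's. The paper proves Theorem~\ref{th:first_order_calM} via the $\gG$-extremal decomposition of the walk $S$ (Section~\ref{sec:max_ener}): the maximal-energy configuration is described by $\gG$-extrema (Fact~\ref{th:fact_maximal_config}), Corollary~\ref{th:cor_expression_calM} expresses $\calM(J)$ as a ratio of expectations of $\gG$-stretch heights and lengths, and Corollary~\ref{th:cor_Donsker} (a consequence of Donsker's invariance principle via Fact~\ref{th:fact_Donsker}, together with uniform-integrability estimates) directly gives the first-order asymptotics of numerator and denominator. You instead set up the truncated-walk Markov chain (essentially the chain $X_n = M_n^+ - M_n^-$ of Section~\ref{sec:integral_form_calM}, in a shifted parametrization), recover the integral formula for $\calM(J)$, and then exploit a stationarity second-moment identity. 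Your algebra is correct: $W = \Pi(W') + 2h$ with $h$ independent of $\Pi(W')$ and centered yields $\bbE[W^2] - \bbE[\Pi(W)^2] = 4\vartheta^2$, and with $L := W - \Pi(W)$, $|L| = (|W|-2J)_+$, this gives the clean identity $\calM(J) = \vartheta^2/(2J) - \bbE_{\pi_J}[L^2]/(8J)$. The upper bound $\calM(J)\le\vartheta^2/(2J)$ (indeed for all $J>0$, not just asymptotically) drops out for free; this observation is not in the paper and is genuinely elegant.

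However, the lower bound has a real gap, which you correctly identify but do not fill. You reduce it to showing, for each fixed $K$, that $\nu_\gG\bigl([\gG-2K,\gG]\cup[-\gG,-\gG+2K]\bigr)\to 0$ as $\gG\to\infty$, where $\nu_\gG$ is the invariant probability of the truncated chain on $[-\gG,\gG]$. This is a statement about non-concentration of the stationary measure near the boundary, and proving it under only assumption~\eqref{eq:assump_mu_initial} (finite variance, centered, nondegenerate) is precisely the technical obstacle your approach faces. The paper's own quantitative control on $\nu_\gG$ near the edge (Theorem~\ref{thm:dev_max_ener:stationary_asymptotics}, Proposition~\ref{thm:dev_max_ener:onestep}, Corollary~\ref{thm:dev_max_ener:contraction_cor}) is developed only under the extra hypotheses {\bf (H-1)} and {\bf (H-2)}/{\bf (H-2$^\prime$)}, which is exactly why the paper does \emph{not} use the Markov-chain route for Theorem~\ref{th:first_order_calM} and falls back on the $\gG$-extrema approach, which needs nothing beyond \eqref{eq:assump_mu_initial}. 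The missing estimate is plausibly reachable under pure finite variance via a cycle/renewal decomposition of $\nu_\gG$ and gambler's-ruin bounds (in the spirit of Lemma~\ref{th:lem_tech_sums}, though that lemma serves a different purpose), but as stated your proposal does not contain it, so the lower bound is not established.
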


\begin{theorem}\label{th:dev_calM}
	Assume \eqref{eq:assump_mu_initial} and that 
\begin{description}
\item[{(H-1)}]
there exists a positive integer $n_0$ such that $h_1+\dots+h_{n_0}$ has a density,
\end{description}
and either that
\begin{description}
	\item[{(H-2)}] there exists $\xi > 5$ such that $\int_\R |x|^\xi \mu(\dd x) < \infty$,
\end{description}
or that
\begin{description}
	\item[{(H-2$^\prime$)}] there exists $c>0$ such that  $\int_\R \exp(c|x|) \mu(\dd x)< \infty$.
\end{description}
Then there exists a positive constant $\widehat{\kappa}$ such that, as $J\to \infty$,
\begin{equation}
\label{eq:main}
\calM(J)
= \frac{\vartheta^2}{2J+ \widehat{\kappa}} 
+ 
\begin{cases}
O\left( 1/J^{\xi-4} \right) & \text{ under  {\bf (H-2)}},
\\
O\left(\exp\left(- \gd  J \right) \right) & \text{ under {\bf (H-2$^\prime$)}},
\end{cases} \,
\end{equation}
for a suitable choice of $\gd>0$.
\end{theorem}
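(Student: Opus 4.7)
The plan is to represent $\calM(J)$ through a Markov chain, derive an exact algebraic identity linking $\calM(J)$ to the second moment of an auxiliary non-negative random variable, and then analyse that second moment as $J\to\infty$.

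First I would recast the optimisation defining $M_N$ as a Bellman-type recursion. Writing $V_N^{\pm}$ for the maximum of $H_{N,J,h}^{++}$ restricted to $\sigma_N=\pm$, one has $V_N^\pm=\max(V_{N-1}^\pm, V_{N-1}^\mp-2J)\pm h_N$, and a direct case analysis gives $D_N:=V_N^+-V_N^-=2h_N+\mathrm{clip}(D_{N-1},-2J,2J)$, where $\mathrm{clip}(x,-a,a):=\max(-a,\min(a,x))$. After the rescaling $Z_n:=D_n/2$ we get a Markov chain on $\mathbb{R}$ with transition $Z_n=h_n+\mathrm{clip}(Z_{n-1},-J,J)$; under \textbf{(H-1)} this chain is uniquely ergodic, with invariant measure $\pi_Z$. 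A telescoping calculation using $M_N=(V_N^++V_N^-)/2+|D_N|/2$ yields $M_N-M_{N-1}=\tfrac12(|Z_N|-|\widetilde Z_{N-1}|)$ with $\widetilde Z:=\mathrm{clip}(Z,-J,J)$, so by the ergodic theorem $\calM(J)=\mathbb{E}_{\pi_Z}[W]$, where $W:=(|Z|-J)_+$.

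Next I would exploit the stationarity relation $Z\stackrel{d}{=}h+\widetilde Z$ with $h$ independent of $\widetilde Z$ applied to $x\mapsto x^2$. Using $\mathbb{E}[h]=0$ this gives $\mathbb{E}[Z^2-\widetilde Z^2]=\vartheta^2$; since $Z^2-\widetilde Z^2=(|Z|+J)(|Z|-J)_+$ and $|Z|+J=2J+W$ on $\{|Z|>J\}$, one obtains the key identity
\[
2J\,\calM(J)+\mathbb{E}[W^2]=\vartheta^2,
\]
or equivalently $\calM(J)=\vartheta^2/(2J+\widehat\kappa(J))$ with $\widehat\kappa(J):=\mathbb{E}[W^2]/\calM(J)$. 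The statement \eqref{eq:main} then reduces to proving that $\widehat\kappa(J)\to\widehat\kappa\in(0,\infty)$ at a quantitative rate matching the theorem, since $\calM(J)-\vartheta^2/(2J+\widehat\kappa)=O(|\widehat\kappa(J)-\widehat\kappa|/J^2)$. By Theorem \ref{th:first_order_calM} we already know $\calM(J)\sim\vartheta^2/(2J)$, so the convergence of $\widehat\kappa(J)$ amounts to showing that $J\,\mathbb{E}[W^2]$ converges to a positive limit $c_*$.

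To analyse this I would switch to the coordinate $U:=J-\widetilde Z$, which evolves as a clipped walk $U_n=\mathrm{clip}(U_{n-1}-h_n,0,2J)$ on $[0,2J]$, and symmetrically near the lower boundary. The random variable $W$ only probes the behaviour of $\pi_U$ near $0$ (and near $2J$): for $h>0$ one has $W=(h-U_{\mathrm{prev}})_+$. After the diffusive rescaling $u\mapsto u/J$, $\pi_U$ converges to the uniform law on $[0,2]$ (the stationary law of a reflected Brownian motion), which recovers the first order; the next-order constant $c_*$ arises from the \emph{boundary layer}: on the unrescaled scale, the local law of $U$ in a fixed neighbourhood of $0$ should converge, after renormalisation by $J$, to the $\sigma$-finite invariant measure of the one-sided clipped walk $U'\mapsto\max(0,U'-h)$ on $[0,\infty)$. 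The interplay between the atom at $\{0\}$ of $\pi_U$ and the non-uniform density nearby then pins down the value of $c_*$ as an explicit functional of $\mu$.

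The main obstacle is to turn this heuristic convergence into quantitative bounds with the exponents claimed in \eqref{eq:main}. Under \textbf{(H-2$^\prime$)}, the exponential moment should yield a Doeblin-type minorisation for the chain with constants controlled uniformly in $J$, giving exponentially fast convergence of the boundary marginals of $\pi_U$ and hence the remainder $O(\exp(-\delta J))$. Under \textbf{(H-2)} I would split $\mathbb{E}[W^2]$ according to the size of $|h|$: for $|h|\le J^\eta$ with a suitable $\eta<1$, the boundary-layer approximation can be controlled at polynomial rate via coupling arguments; for $|h|>J^\eta$ one uses $W\le|h|$ together with Markov's inequality from $\mathbb{E}|h|^\xi<\infty$, and optimising $\eta$ should produce the claimed $O(1/J^{\xi-4})$ remainder. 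The most delicate step—and the one where the method hits its limit, as the author already signals in the introduction—is precisely the quantitative control of the boundary-layer distribution, which is why the argument does not extend beyond order $1/J^2$ in the expansion of $\calM(J)$.
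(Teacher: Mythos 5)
Your first three steps are an elegant and \emph{genuinely different} reduction from the one in the paper, and it is worth spelling out why. After passing to the same clipped Markov chain that the paper uses (equations \eqref{eq:recursion_M+_M-}--\eqref{eq:recursion_X}), you apply the stationarity relation $Z \stackrel{d}{=} h + \widetilde{Z}$ (with $h$ independent of $\widetilde{Z}$) to the test function $x\mapsto x^2$ and obtain the exact identity
\[
2J\,\calM(J) + \E\bigl[W^2\bigr] = \vartheta^2,
\qquad W := (|Z|-J)_+,
\]
which immediately gives $\calM(J)=\vartheta^2/\bigl(2J+\widehat{\kappa}(J)\bigr)$ with $\widehat{\kappa}(J)=\E[W^2]/\calM(J)$. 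This is correct (after fixing a minor slip: the telescoping actually gives $M_N-M_{N-1}=|Z_N|-|\widetilde Z_{N-1}|$, with no extra $\tfrac12$; your displayed conclusion $\calM(J)=\E_{\pi_Z}[W]$ is nonetheless right). The identity provides for free what the paper obtains in two separate steps: it identifies the numerator $\vartheta^2$ without any computation (the paper must combine its Proposition \ref{thm:dev_max_ener:onestep} with Theorem \ref{th:first_order_calM} to conclude $\widehat\kappa_1=\vartheta^2$), and it rigidly constrains the denominator to be $2J+\E[W^2]/\calM(J)$, in the exact form \eqref{eq:main}. By contrast, the paper proceeds by constructing an explicit approximate invariant measure $\gamma_\Gamma$ by patching two Lindley-process invariant measures, bounding $d(\nu_\Gamma,\gamma_\Gamma)$ via a Wasserstein contraction estimate (Lemma \ref{lem:iter}, Corollary \ref{thm:dev_max_ener:contraction_cor}), and then computing $\calM_\Gamma[\gamma_\Gamma]$.

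However, there is a genuine gap: the entire quantitative content of Theorem \ref{th:dev_calM} has been repackaged as ``prove that $J\,\E[W^2]\to c_*$ with the stated error rate'', and this is only sketched heuristically. That remaining step is exactly where all the work in the paper's proof lies. Concretely, $W$ is a boundary functional of the invariant law $\nu_\Gamma$ near $\pm\Gamma$, so one needs: (i) the $O(x^{-(\xi-3)})$ (resp.\ exponential) asymptotic expansion of the $\sigma$-finite Lindley invariant measure, which is the paper's Theorem \ref{thm:dev_max_ener:stationary_asymptotics} and requires the renewal-theoretic machinery built on hypothesis \textbf{(H-1)} to control the second-order constant; and (ii) a quantitative comparison between $\nu_\Gamma$ near the edges and the Lindley measure, at a rate beating the $\Gamma^2$ loss from mixing — this is the role of the contraction estimate and the patched measure $\gamma_\Gamma$. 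Your proposal does not explain how either of these is obtained with the stated exponents, and some of the heuristics offered are not correct as stated: a Doeblin minorisation with constants uniform in $J$ for the clipped chain does \emph{not} hold (the chain mixes on the diffusive time scale $\Gamma^2$, which is precisely what Lemma \ref{lem:iter} quantifies), and the Lindley chain is null recurrent so ``exponentially fast convergence'' is not the right notion — what is needed (and what the paper proves) is an exponential or polynomial error in the linear expansion of the \emph{invariant measure}. Likewise, under \textbf{(H-2)} the loss of four powers of $J$ in the exponent $\xi-4$ arises from a specific bookkeeping of the renewal error and the contraction factor $\Gamma^2$; the splitting in $|h|\le J^\eta$ that you propose would need to recover precisely this exponent, which is not obvious from the sketch. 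So while the identity is a promising shortcut that the paper does not exploit, it does not on its own circumvent the need for the quantitative boundary-layer analysis; the proof is incomplete without it.
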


We will provide a somewhat explicit expression for the value of the constant $\widehat{\kappa}$. 
To give this expression, let us introduce $S=(S_n)_{n\ge 0}$ the random walk associated to $h=(h_n)_{n\in \N}$:
\begin{equation}
S_0= 0 \, , \qquad S_n := \sum_{j=1}^n h_j \, , \qquad n= 1, 2, \dots 
\end{equation}
Further denote
\begin{equation}
\begin{aligned}
\alpha_\lar & :=  \inf\{ n \ge 1 : S_n > 0\} \, ,  \qquad H_\lar := S_{\alpha_\lar} \, ,\\
\alpha_\rar & :=  \inf\{ n \ge 1 : S_n < 0\} \, ,  \qquad H_\rar := -S_{\alpha_\rar} \, .\\
\end{aligned}
\end{equation}
The constant $\widehat{\kappa}$ is given by
\begin{equation}\label{eq:expr_widehat_kappa}
\widehat{\kappa}=\frac{1}{2} \left(\frac{\E[H_\lar^2]}{\E[H_\lar]}+\frac{\E[H_\rar^2]}{\E[H_\rar]}\right)\, .
\end{equation}

\subsubsection{Result concerning the free energy}\label{sec:intro_results_free_ener}


\begin{theorem}\label{th:dev_calF}
	Assume \eqref{eq:assump_mu_initial}. Then there exists a positive constant $\widetilde{\kappa}$ such that, as $J\to\infty$, 
\begin{equation}
\label{eq:th_relate_F_M}
\calF(J)
= \calM(J)+ \frac{\vartheta^2 \widetilde{\kappa}}{(2J)^2}
+ o(1/J^2).
\end{equation}
\end{theorem}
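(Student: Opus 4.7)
The plan is to express $Z^{gd}_{N,J,h} = e^{M^{gd}_{N,J,h}} \cdot W^{gd}_{N,J,h}$ with
$$W^{gd}_{N,J,h} := \sum_{\sigma \in \{\pm 1\}^N} \exp\!\left(H^{gd}_{N,J,h}(\sigma) - M^{gd}_{N,J,h}\right),$$
so that by Lemma \ref{th:def_free_ener},
$$\calF(J) - \calM(J) = \lim_{N \to \infty} \frac{1}{N}\, \E\!\left[\log W^{gd}_{N,J,h}\right].$$
I would then show that this limit equals $\vartheta^2 \widetilde\kappa/(2J)^2 + o(1/J^2)$ as $J \to \infty$, by estimating $W$ through a controlled sum over configurations whose energy is close to the optimum.

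First, I would describe the almost sure optimizer $\sigma^*$ via a renewal structure driven by the random walk $S_n = \sum_{j\le n} h_j$ at scale $2J$: walls in $\sigma^*$ are located at large-scale extrema of $S$ (extending the ladder-height structure that gave $\widehat\kappa$ in Theorem \ref{th:dev_calM}), with wall density $\rho_w(J) \sim \vartheta^2/(4J^2)$ as $J \to \infty$ (consistent with the envelope relation $\partial_J\calM = -2\rho_w$ combined with Theorem \ref{th:first_order_calM}). Configurations $\sigma \ne \sigma^*$ then split into (i) local wall shifts that preserve the number of walls, and (ii) droplet configurations that add extra pairs of walls. Each added wall pair costs at least $4J$ before any field compensation, so summing the Boltzmann weights of droplets over all locations and sizes gives a contribution to $W$ of order $N\mathrm{e}^{-4J}$ (up to polynomial factors), negligible compared to the target $1/J^2$.

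The dominant contribution comes from local wall shifts, which approximately factorise over walls: shifting wall $w$ at position $n_w$ by $k$ positions (up to the next wall) flips the corresponding spins and contributes a Boltzmann factor $\mathrm{e}^{-2|S_{n_w \pm k} - S_{n_w}|} \le 1$ depending only on a local window of disorder around $w$. Because walls are typically $\sim J^2/\vartheta^2$ apart while typical shifts are of order $1$, simultaneous shifts at distinct walls interact with a negligibly small correction. One therefore defines, for each wall $w$,
$$z_w := 1 + \sum_{k\ge 1} \mathrm{e}^{-2|S_{n_w}-S_{n_w-k}|} + \sum_{k\ge 1} \mathrm{e}^{-2|S_{n_w}-S_{n_w+k}|},$$
with the sums truncated at the neighbouring walls, and obtains $\log W \approx \sum_w \log z_w$. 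An ergodic/renewal theorem applied to the wall positions then yields
$$\frac{1}{N}\, \E[\log W^{gd}_{N,J,h}] \xrightarrow[N\to\infty]{} \rho_w(J) \cdot \E[\log z_w],$$
and asymptotic analysis as $J \to \infty$ gives $\rho_w(J)\cdot\E[\log z_w] = \vartheta^2\widetilde\kappa/(2J)^2 + o(1/J^2)$ with a finite, strictly positive $\widetilde\kappa = \lim_{J\to\infty}\E[\log z_w]$ (strict positivity follows from $z_w > 1$ on an event of positive probability).

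The main obstacle will be the rigorous control of the factorisation and the various error terms (wall interactions, truncation of shift windows, droplet contributions), which must all remain uniformly $o(1/J^2)$. Since the renewal structure giving the walls of $\sigma^*$ is tied to the ladder-height data $H_\lar,H_\rar$ of a centered random walk satisfying only \eqref{eq:assump_mu_initial} (cf.\ \eqref{eq:expr_widehat_kappa}), applying an ergodic theorem in this setting will require careful moment estimates and possibly a truncation of the disorder law. Handling this delicate renewal analysis uniformly in $J$ is likely the most technical step.
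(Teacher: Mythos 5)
Your overall strategy mirrors the paper's: expand $\log Z$ around the maximal-energy configuration, argue that the dominant correction comes from local wall shifts whose Boltzmann weights factorize over walls, and identify $\widetilde\kappa$ as the limiting expectation of the per-wall log-sum. The wall density $\rho_w(J)=2/\E[L^\downarrow_\gG+L^\uparrow_\gG]\sim\vartheta^2/(2J)^2$ and the constant $\widetilde\kappa=\E[\log\sum_{n\in\Z}e^{-2Z_n}]$ both match what the paper derives, so your identification of the leading term is correct.

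However, your handling of the droplet (extra-wall) contribution contains a genuine gap. You assert that an added wall pair costs at least $4J$ and thus contributes $O(Ne^{-4J})$ to $W$, but this ignores the field compensation you parenthetically acknowledge. A droplet of spins flipped on an interval $[n_1,n_2]$ inside an ascending $\gG$-stretch costs $4J-2(S_{n_2}-S_{n_1})$, and since by definition of a $\gG$-stretch only increments of size $<\gG=2J$ occur, this net cost can be arbitrarily close to $0$. The sum over droplet locations and sizes therefore does \emph{not} admit the bound $Ne^{-4J}$, and the claim that droplets are negligible fails at this level of argument. The paper's resolution is a key technical idea absent from your proposal: build the wall structure on $(\gG-c_\gG)$-extrema with $c_\gG=5\log\gG$ rather than on $\gG$-extrema. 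Then removing a superfluous pair of walls gains at least $2c_\gG=10\log\gG$, and the combinatorial sum over excess walls is controlled as $\sum_{r\ge 0}\binom{N}{2r}e^{-2rc_\gG}=(1+\gG^{-10})^N$, contributing only $O(\gG^{-10})$ per site — safely below $1/J^2$. The mismatch between the extremum scale $\gG-c_\gG$ and the coupling $\gG$ is precisely what makes the droplet sum converge.

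A second, smaller omission: your decomposition into wall shifts and droplets misses configurations that \emph{remove} walls from the optimizer (fewer spin flips than $\sigma^*$). This can be favorable when a $(\gG-c_\gG)$-stretch has height barely exceeding the threshold. The paper isolates stretches with height in $[\gG-c_\gG,\gG+c_\gG]$, shows their density vanishes (via Fact~\ref{th:fact_Donsker}), and bounds the resulting $2^{\#I_K}$ configurations by the maximal energy. Without this step your upper bound is not complete. Finally, you correctly flag that finiteness of $\E[\log z_w]$ and uniform integrability are needed; the paper devotes Lemmas~\ref{th:lem_tech_sums} and~\ref{th:lem_Elog_fine} to exactly this, using gambler's-ruin estimates on the conditioned walk — these are nontrivial and must be supplied, not merely acknowledged.
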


We will provide a somewhat explicit expression for the value of the constant $\widetilde{\kappa}$. 
To give this expression, we first need to introduce a process $Z=(Z_n)_{n\in \Z}$  indexed by $\Z$ and with values in $\R$. 
Let us first extend $(h_n)_{n\in \N}$ to $(h_n)_{n\in \Z}$, still an i.i.d. sequence with marginal law $\mu$, and extend the random walk $S=(S_n)_{n\ge 0}$ to the whole of $\Z$ by setting:
\begin{equation}
S_0=0\, , \qquad 
S_n = \sum_{i=1}^n h_i \text{ for } n\ge 1\, , \quad  
S_n= - \sum_{i=n+1}^0 h_i \text{ for } n\le -1 \, .
\end{equation}
In other terms, $S=(S_n)_{n\in\Z}$ is the walk satisfying $S_0=0$ and $S_n-S_{n-1}=h_n$ for all $n\in\Z$.
The process $Z=(Z_n)_{n\in \Z}$ is then obtained by conditioning $S$ to be non-negative: for a rigorous definition, see Section \ref{sec:around_extremum}, where we will also argue that $Z$ (respectfully, $(-Z_{-n})_{n\in\Z}$) locally describes the walk $S$ around a $\gG$-minimum (respectfully, a $\gG$-maximum). 
The constant $\widetilde{\kappa}$ is given by the following expectation:
\begin{equation}\label{eq:expr_widetilde_kappa}
\widetilde{\kappa} = \E \left[ \log\left( \sum_{n\in\Z} e^{-2Z_n} \right) \right]\, .
\end{equation}
Note that $\widetilde{\kappa}$ is always positive, since $Z_0=0$.

\subsection{Consequences and links with the existing literature}

From Theorems \ref{th:first_order_calM} and \ref{th:dev_calF} we derive the following corollary.
\begin{corollary}\label{th:cor_first_order}
	Assume \eqref{eq:assump_mu_initial}. Then, as $J\to\infty$,
	\begin{equation}
		\calF(J) \sim \frac{\vartheta^2}{2J} \, .
	\end{equation}
\end{corollary}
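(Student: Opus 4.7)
The plan is direct: Corollary~\ref{th:cor_first_order} follows by combining Theorem~\ref{th:first_order_calM} with Theorem~\ref{th:dev_calF}. Both of these apply under exactly the same hypothesis~\eqref{eq:assump_mu_initial} as the corollary, so there is nothing to check regarding assumptions.

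The argument I would write proceeds in two lines. From Theorem~\ref{th:dev_calF},
\begin{equation*}
\calF(J) - \calM(J) \;=\; \frac{\vartheta^2\,\widetilde{\kappa}}{(2J)^2} + o(1/J^2) \;=\; O(1/J^2)
\end{equation*}
as $J\to\infty$. Theorem~\ref{th:first_order_calM} meanwhile gives $\calM(J) = \vartheta^2/(2J) + o(1/J)$. Summing these two, the $O(1/J^2)$ term is absorbed into the $o(1/J)$ remainder, so
\begin{equation*}
\calF(J) \;=\; \frac{\vartheta^2}{2J} + o(1/J),
\end{equation*}
which is exactly the asymptotic $\calF(J) \sim \vartheta^2/(2J)$.

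Since the deduction is a two-line combination, there is no real obstacle in the corollary itself; the work lies entirely in the two theorems being combined. In particular, Theorem~\ref{th:dev_calF} does the heavy lifting by showing that, in the strong coupling regime, the (rescaled) logarithm of the partition function and the rescaled maximal energy differ only at order $1/J^2$. Per the overview in Section~\ref{sec:introduction}, this amounts to controlling the entropic contribution of near-maximizing configurations via the description of typical configurations from~\cite{CGH25+}. Theorem~\ref{th:first_order_calM}, which supplies the leading-order behaviour of $\calM(J)$, is of course itself the zeroth-order version of the sharper expansion given in Theorem~\ref{th:dev_calM}.
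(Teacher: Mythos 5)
Your proof is correct and matches the paper's approach exactly: the paper also derives the corollary by combining Theorem~\ref{th:first_order_calM} (first-order asymptotics of $\calM$) with Theorem~\ref{th:dev_calF} (which shows $\calF(J)-\calM(J)=O(1/J^2)$), and the deduction is the same two-line absorption of the $O(1/J^2)$ correction into the $o(1/J)$ remainder.
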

This first order expansion of $\calF(J)$ was already proven in \cite{C25} under the slightly stronger hypothesis that $\mu$ admits a moment of order $\xi$ for some $\xi>\frac{3+\sqrt{5}}{2}\approx 2.618$. The present paper thus extends this result to the most general case where it can be expected to hold: if $\mu$ does not have a finite second moment, then the variance $\vartheta^2$ is simply not defined. 
On the other hand, in \cite{C25}, a control was given on the rest, depending on the value of $\xi$, but let us stress that this control was not very precise: it did not go down to the order $J^{-2}$.

Furstenberg's theory for random matrix products provides an expression 
for the Lyapunov exponent $\calF(J)$ in terms of the invariant probability measure of some Markov chain on $\R$. In \cite{CGGH25}, exploiting this expression, it was established that under the same assumptions as in our Theorem \ref{th:dev_calM}, there exists $\kappa \in \bbR$ such that for $J\to \infty$
\begin{equation}
\label{eq:main_CGGH25}
\calF(J)
= \frac{\vartheta^2}{2J+ \kappa } 
+ 
\begin{cases}
O\left( 1/J^{\xi-4} \right) & \text{ under  {\bf (H-2)}},
\\
O\left(\exp\left(- \gd J\right) \right) & \text{ under {\bf (H-2$^\prime$)}},
\end{cases} \,
\end{equation}
for a suitable choice of $\gd>0$.

\medskip

The technique used in \cite{CGGH25} to derive this result yields expressions for the constants appearing in the numerator and in the denominator of \eqref{eq:main_CGGH25} which are not explicit (see formulas (3.13) and (3.17) in \cite{CGGH25}). The fact that the constant in the numerator is equal to the disorder variance $\vartheta^2$ derives from our Corollary \ref{th:cor_first_order}, but also already from \cite{C25}; and in fact in the appendix of \cite{CGGH25} the authors managed to show that their complicated expression for this constant can be reduced to $\vartheta^2$.
On the contrary the value of the coefficient $\kappa$ in the denominator remained implicit: its expression in \cite{CGGH25} involves two quantities depending on the invariant measures of some Markov chains on $\mathbb{R}$. Grasping the value of $\kappa$ is also beyond the reach of \cite{C25}, since, as already mentioned, the expansion proven there does not go down to the order $J^{-2}$. 

From Theorems \ref{th:dev_calM} and \ref{th:dev_calF} in the present work we derive a more explicit expression for the constant $\kappa$ in \eqref{eq:main_CGGH25}.
\begin{corollary}\label{th:cor_dev_free_ener}
	Under the assumptions of Theorem \ref{th:dev_calM}, the constant $\kappa$ appearing in \eqref{eq:main_CGGH25} is given by
	\begin{equation}
		\kappa= \widehat{\kappa} - \widetilde{\kappa}
	\end{equation}
	where $\widehat{\kappa}$ is given in \eqref{eq:expr_widehat_kappa} and $\widetilde{\kappa}$ in \eqref{eq:expr_widetilde_kappa}.
\end{corollary}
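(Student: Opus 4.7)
The plan is to derive the identity $\kappa = \widehat\kappa - \widetilde\kappa$ by combining the three asymptotic expansions already at our disposal: the \cite{CGGH25} expansion \eqref{eq:main_CGGH25} of $\calF(J)$, the expansion of $\calM(J)$ from Theorem \ref{th:dev_calM}, and the identity of Theorem \ref{th:dev_calF} relating $\calF$ to $\calM$. The value of $\kappa$ is read off by matching coefficients of $1/J^2$ in the difference $\calF(J) - \calM(J)$.

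First I expand the two rational main terms to second order in $1/J$:
\begin{equation*}
\frac{\vartheta^2}{2J+\kappa} - \frac{\vartheta^2}{2J+\widehat\kappa}
= \frac{\vartheta^2(\widehat\kappa-\kappa)}{(2J+\kappa)(2J+\widehat\kappa)}
= \frac{\vartheta^2(\widehat\kappa-\kappa)}{(2J)^2} + O(1/J^3).
\end{equation*}
Subtracting \eqref{eq:main} from \eqref{eq:main_CGGH25} therefore yields
\begin{equation*}
\calF(J) - \calM(J) = \frac{\vartheta^2(\widehat\kappa-\kappa)}{(2J)^2} + \mathcal{R}(J),
\end{equation*}
where $\mathcal{R}(J)$ collects the two rest terms from \eqref{eq:main_CGGH25} and \eqref{eq:main} together with the $O(1/J^3)$ Taylor remainder above. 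Independently, Theorem \ref{th:dev_calF} reads
\begin{equation*}
\calF(J) - \calM(J) = \frac{\vartheta^2 \widetilde\kappa}{(2J)^2} + o(1/J^2).
\end{equation*}
Equating these two expressions for $\calF(J) - \calM(J)$, multiplying through by $(2J)^2/\vartheta^2$ and sending $J \to \infty$ delivers $\widehat\kappa - \kappa = \widetilde\kappa$, which is the claim.

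The manipulation itself is elementary; the bulk of the work has already been done in Theorems \ref{th:dev_calM} and \ref{th:dev_calF}. The one point that needs care is to ensure that $\mathcal{R}(J) = o(1/J^2)$, so that the $1/J^2$ coefficients can legitimately be compared. Under \textbf{(H-2$^\prime$)} this is immediate since the rest terms decay exponentially. Under \textbf{(H-2)} the rest is $O(1/J^{\xi-4})$, which is $o(1/J^2)$ as soon as $\xi > 6$; in the borderline range $\xi \in (5,6]$ the constant $\kappa$ is not uniquely pinned down by the bound \eqref{eq:main_CGGH25} and the corollary should be read as identifying $\widehat\kappa - \widetilde\kappa$ as an admissible value. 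The main conceptual obstacle is thus not the algebra above, but the availability of an expansion of $\calF(J) - \calM(J)$ precise to order $o(1/J^2)$ with an \emph{explicit} $1/J^2$ coefficient, which is precisely the content of Theorem \ref{th:dev_calF}.
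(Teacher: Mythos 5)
Your proof is correct and carries out exactly the coefficient-matching argument that the paper leaves implicit when it says the corollary ``derives from'' Theorems \ref{th:dev_calM} and \ref{th:dev_calF}. Your caveat about the regime $\xi\in(5,6]$ is well taken and is in fact acknowledged in the paper immediately after the corollary, where it is noted that $\kappa$ in \eqref{eq:main_CGGH25} is only relevant when $\mu$ has a moment of order larger than $6$.
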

Let us stress that in the main theorem of \cite{CGGH25}, i.e., in \eqref{eq:main_CGGH25} above, $\kappa$ is relevant only if $\mu$ has a moment of order larger than 6.

Although we are unable to make explicit computations for any particular choice of the law $\mu$ (see Remark \ref{rem:no_computation}), this expression $\kappa=\widehat{\kappa}-\widetilde{\kappa}$ seems to depend on fine details of the law $\mu$, so it does not seem to exhibit the same universality as the constant in the numerator of \eqref{eq:main_CGGH25}.

\begin{remark}\label{rem:no_computation}
Unfortunately, we are unable to compute $\kappa$ in any special case using our formulas. 
If $\mu$ is the Laplace law, then we remark that $\widehat{\kappa}$ can be computed since, using the loss of memory property of exponential variables, $H_\lar$ and $H_\rar$ are distributed as exponential variables. The constant $\widetilde{\kappa}$ is however more complicated to compute. 

If $S$ is the simple random walk, then $\widehat{\kappa}= 1$, but we are unable to compute $\widetilde{\kappa}$; anyway, the simple random walk does not comply with hypothesis {\bf (H-1)} of Theorem \ref{th:dev_calM}.  

\end{remark}

Continuing our review of the literature, let us mention a special choice of the law $\mu$. In \cite{CGGH25} it is shown that if $\mu$ has density $\frac{1}{(2\cosh(x/2))^2}$, then $\kappa$ is equal to zero.



We mention that in \cite{DeMoor2025} Lyapunov exponents of more general products of i.i.d. random $2\times2$-matrices have been studied. A first order development of the Lyapunov exponent was obtained, which coincides with ours. In particular, the paper allows for negative coefficients in the matrices, which is well beyond our reach since we rely on lower and upper bounds that exploit the positivity of the terms contributing to the partition function. We stress that these results however do not go far beyond order $1/J$ and are established only for compactly supported disorder variables.

Finally, let us mention that beyond order $1/J^2$, the regularity assumptions on the disorder law are expected to play a role: in \cite{Luck1991}, it is indeed claimed that when $S$ is the (lazy) simple random walk, the next term in the expansion is of the form $\frac{\omega(J)}{J^3}$ where $\omega$ is a 1-periodic function. 


\bigskip

\subsection{Structure of the paper}

In Section \ref{sec:max_ener} we describe a configuration which maximises the Hamiltonian, with the help of the notion of $\gG$-extrema of the walk $S$ associated to $h$. Examining the behaviour of these $\gG$-extrema for large $\gG$, we obtain a proof of Theorem \ref{th:first_order_calM}.

In Section \ref{sec:proof_main_thms}, using the point of view of the large energy configurations of the RFIC, we prove Theorem \ref{th:dev_calF}. 

Finally, in Section \ref{sec:proof_th:dev_calM}, we prove Theorem \ref{th:dev_calM}. This last proof is essentially independent of the rest of the paper, in particular it is not based on the description of the configurations with large energy; it is modelled on the proof of the main theorem in \cite{CGGH25}.



\section{Maximal energy configuration}\label{sec:max_ener}

\subsection{Process of $\gG$-extrema associated to the random walk $S$}

We use the notations of \cite{CGH25+}, which we recall now. Recall that $S=(S_n)_{n\ge 0}$ is the random walk associated to the i.i.d. sequence $h=(h_n)_{n\in \N}$:
\begin{equation}
S_0= 0 \, , \qquad S_n := \sum_{j=1}^n h_j \, , \qquad n= 1, 2, \dots 
\end{equation}
Let us introduce for $0\le m\le n$
\begin{equation}
 S^\uparrow_{m,n}\,:=\,  \max_{m\le i\le j\le n} (S_j-S_i), \qquad S^\downarrow_{m,n}:= \max_{m\le i\le j\le n} (S_i-S_j)\,,  
 \end{equation}
 and for every $\gG>0$ the \emph{first time of $\gG$-decrease} 
\begin{equation}
 \label{eq:completenotation1}
 \ttt_1(\gG)\,:=\, \inf\left\{n > 0: S^\downarrow_{0,n} \ge \gG\right\} \, . 
 \end{equation}
For almost all realizations of $(h_n)$ we have that $\limsup_n S_n=-\liminf_n S_n= \infty$: we assume that we work on such realizations,
so the infimum in \eqref{eq:completenotation1} can be replaced by a minimum and
$\ttt_1(\gG)<\infty$.
Next we introduce (with the notation $\lbra j, k\rbra:=[j, k] \cap \bbZ$ for the integers $j\le k$)
\begin{equation}
 \label{eq:completenotation2}
\tu_1(\gG)\,:=\, \min\left\{ n \in \lbra 0,  \ttt_1(\gG)\rbra: S_n= \max_{i \in \lbra 0, \ttt_1(\gG)\rbra} S_i\right\}\,,
\end{equation}
so $\tu_1(\gG)$ is the first time that $S$ reaches its maximum before having a decrease of at least $\gG$: 
this is what we call \emph{location of the first  $\gG$-maximum}.
Note however that $S$ may reach this maximum also at later times and before time $\ttt_1(\gG)$. This is a situation we describe as the occurrence of \emph{multiple $\Gamma$-extrema}. In fact,
this happens with positive probability (at least for $\gG$ large) if and only 
if there exists a positive integer $n$ and real numbers $x_1,  \dots, x_n$ (not necessarily disctinct) which are atoms of the law of $h_1$ and such that $x_1+\dots+ x_n=0$. 
Therefore, we introduce also 
\begin{equation}
 \label{eq:completenotation2.1}
\tuplus_1(\gG)\,:=\, \max\left\{ n \in \lbra 0,  \ttt_1(\gG)\rbra:\,  S_n= \max_{i\in \lbra 0, \ttt_1(\gG)\rbra} S_i\right\}\,. 
\end{equation}
Now we proceed by looking for the first time of $\gG$-increase after $ \ttt_1(\gG)$
\begin{equation}
 \ttt_2(\gG)\,:=\, \min\left\{n> \ttt_1(\gG):\,  S^\uparrow_{\ttt_1(\gG),n} \ge \gG\right\} \, ,
 \end{equation}
 and we set 
 \begin{equation}
 \tu_2(\gG) \,:=\,  \min\left\{n\in \lbra \ttt_1(\gG), \ttt_2(\gG)\rbra:\,  S_n= \min_{i \in \lbra \ttt_1(\gG), \ttt_2(\gG)\rbra} S_i\right\}\,,
 \end{equation}
 and
 \begin{equation}
  \tuplus_2(\gG) \,:=\,  \max\left\{n \in \lbra \ttt_1(\gG), \ttt_2(\gG)\rbra:\,  S_n= \min_{i \in \lbra \ttt_1(\gG), \ttt_2(\gG)\rbra} S_i\right\}\, .
  \end{equation}
So  $\ttt_2(\gG)$ is the first time at which there is an increase of $S$  at least $\gG$ after the decrease time $\ttt_1(\gG)$.
And $ \tu_2(\gG)$, respectively $\tuplus_2(\gG)$, is the first (respectively last) absolute minimum of the walk between 
$\ttt_1(\gG)$ and $\ttt_2(\gG)$.

Now  (almost surely  in the realization of the $h$ sequence) we can  iterate this procedure to build the increasing sequences $(\ttt_j(\gG))_{j \in \bbN}$, 
$(\tu_j(\gG))_{j \in \bbN}$ and $(\tuplus_j(\gG))_{j \in \bbN}$ with 
\begin{equation}
\ttt_j(\gG) \, \le \tu_j(\gG)\, \le\,  \tuplus_j(\gG)\, < \, \ttt_{j+1}(\gG)\, ,
\end{equation} 
with  $\tu_j(\gG)$ and $ \tuplus_j(\gG)$ that are the locations of the (first and last) maxima (respectively minima) of $S$, more precisely the first and last $S$ absolute maxima in $[\ttt_j(\gG); \ttt_{j+1}(\gG)]$,
if $j$ is odd (respectively even). Explicitly 
\begin{equation}\label{eq:def_t_k}
 \ttt_{j+1}(\gG)\,:=\begin{cases} 
 \min\big\{n> \ttt_j(\gG):\,  S^\downarrow_{\ttt_j(\gG),n} \ge \gG\big\} & \textrm{ if $j$ is even} \, ,
 \\
 \min\big\{n> \ttt_j(\gG):\,  S^\uparrow_{\ttt_j(\gG),n} \ge \gG\big\} & \textrm{ if $j$ is odd} \, ,
 \end{cases}
 \end{equation}
  \begin{equation}
  \label{eq:new-u}
 \tu_{j+1}(\gG) \,:=\begin{cases}  
 \min\left\{n \in \lbra \ttt_j(\gG),  \ttt_{j+1}(\gG)\rbra:\,  S_n= \max_{i \in \lbra \ttt_j(\gG), \ttt_{j+1}(\gG)\rbra} S_i\right\} &  \textrm{ if $j$ is even}\, ,
 \\
  \min\left\{n \in \lbra \ttt_j(\gG),  \ttt_{j+1}(\gG)\rbra:\,  S_n= \min_{i \in \lbra \ttt_j(\gG), \ttt_{j+1}(\gG)\rbra} S_i\right\} &  \textrm{ if $j$ is odd}\, ,
 \end{cases}
 \end{equation}
 and $\tu_{j+1}^+(\gG)$ is defined like $\tu_{j+1}(\gG)$ with the minimum for 
 $n \in \lbra \ttt_j(\gG),  \ttt_{j+1}(\gG)\rbra$ replaced by a maximum. 
 
Like before, we  say that $\tu_{j}(\gG)$ and  $\tu_{j}^+(\gG)$ are location of $\gG$-extrema: maxima (respectively minima) if $j$ is odd (respectively even).
In $\lbra \tu_{j}(\gG), \tu_{j}^+(\gG)\rbra$ there can be other $\gG$-maxima if $j$ is odd (or $\gG$-minima if $j$ is even).

\bigskip

The notion of $\gG$-extrema is crucial to describe the configurations with maximal energy: in fact, this is done in detail in \cite{CGH25+} (see Appendix C there). Since taking into account the effects of the boundaries complicates the description, we are going to work only with boundary conditions $++$ and with special lengths of the system, to keep the presentation simple. This will be enough for our purpose. We say that a configuration $\sigma$ switches spin at position $n$ if $\sigma_n\neq \sigma_{n+1}$.

\begin{fact}\label{th:fact_maximal_config}
For every $K\ge0$, we set $N_K=t_{2K}(\gG)$ and we consider the system $\{1, \dots, N_K\}$. Then, the configuration which, with respect to boundary conditions $++$, switches spin exactly at every $u_k(\gG), k=1, \dots, 2K$, 
maximises the Hamiltonian $H_{N_K, J, h}^{++}$.
\end{fact}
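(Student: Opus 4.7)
My plan is to encode the Hamiltonian entirely in terms of switch positions and then identify the maximiser through an exchange argument at the level of the $\gG$-extrema. For any $\sigma \in \{+1, -1\}^{N_K}$ compatible with boundary conditions $++$, the set of switching sites $V(\sigma) := \{n \in \lbra 0, N_K \rbra : \sigma_n \neq \sigma_{n+1}\}$ has even cardinality $m$; writing $V(\sigma) = \{v_1 < v_2 < \dots < v_m\}$ and telescoping against $S_0 = 0$, one directly checks that
\begin{equation}
H_{N_K, J, h}^{++}(\sigma) \;=\; S_{N_K} \;+\; 2 \sum_{k=1}^{m/2} \bigl( S_{v_{2k-1}} - S_{v_{2k}} - 2J \bigr).
\end{equation}
Maximising $H_{N_K,J,h}^{++}$ is therefore equivalent to maximising the purely combinatorial functional $\Phi(v_1, \dots, v_m) := \sum_{k=1}^{m/2} ( S_{v_{2k-1}} - S_{v_{2k}} - 2J )$ over all even-length ordered chains in $\lbra 0, N_K \rbra$; the configuration announced in the statement is exactly the chain $m = 2K$, $v_j = \tu_j(\gG)$.

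I would then run an exchange argument, decomposing $\lbra 0, N_K \rbra$ into the $\gG$-blocks $\lbra \ttt_{j-1}(\gG), \ttt_j(\gG) \rbra$ (with the convention $\ttt_0(\gG) := 0$) and exploiting two geometric consequences of the definitions of Section \ref{sec:max_ener}: (a) on a descent block $\lbra \ttt_{2k-2}(\gG), \ttt_{2k-1}(\gG) \rbra$ the maximum of $S$ is attained at $\tu_{2k-1}(\gG)$ and the minimum at $\ttt_{2k-1}(\gG)$; (b) on an ascent block $\lbra \ttt_{2k-1}(\gG), \ttt_{2k}(\gG) \rbra$ the roles are reversed. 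Given an arbitrary admissible chain $(v_1, \dots, v_m)$, I would first slide each odd-indexed $v_{2k-1}$ to the $S$-maximum and each even-indexed $v_{2k}$ to the $S$-minimum of the block in which it sits; this preserves the ordering $v_1 < \dots < v_m$ and can only increase $\Phi$, leaving us with a chain supported on $\gG$-extrema.

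It then remains to optimise over chains supported on the finite set $\{\tu_1(\gG), \dots, \tu_{2K}(\gG)\}$. Here one uses the bounds $S_{\tu_{2k-1}(\gG)} - S_{\tu_{2k}(\gG)} \ge \gG$ and $S_{\tu_{2k+1}(\gG)} - S_{\tu_{2k}(\gG)} \ge \gG$ inherited from the defining inequalities of $\ttt_{2k-1}(\gG)$ and $\ttt_{2k}(\gG)$, which imply that skipping a pair $(\tu_{2k-1}, \tu_{2k})$ or merging two consecutive pairs into a single jump is never beneficial provided $\gG \ge 2J$, the regime in which the fact will be applied. The conclusion is that the chain $(\tu_1(\gG), \dots, \tu_{2K}(\gG))$ is optimal and Fact \ref{th:fact_maximal_config} follows.

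The step I expect to be the main obstacle is the sliding argument in the second paragraph: in full generality the chain $(v_j)$ may be distributed very irregularly over the blocks, with some blocks containing several switches and others none, and with pairs that straddle block boundaries. Formalising the sliding move so that the ordering constraint is never violated (in particular near the block endpoints $\ttt_j(\gG)$, where $S$-maxima and $S$-minima of adjacent blocks coalesce) requires a careful case analysis; the cleanest route appears to be an induction on $K$, each step peeling off the leftmost pair of switches of any admissible chain, showing that it can be replaced by $(\tu_1(\gG), \tu_2(\gG))$ without loss, and then applying the induction hypothesis to the subsystem $\lbra \ttt_2(\gG), N_K \rbra$.
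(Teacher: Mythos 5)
Your reduction of the Hamiltonian to the functional $\Phi$ is correct and is essentially the implicit starting point of the paper's proof, and the third stage — optimizing over chains supported on $\gG$-extrema via the inequalities $S_{\tu_{2k-1}(\gG)} - S_{\tu_{2k}(\gG)} \ge \gG = 2J$ — is also sound. The gap is in the sliding step, and it is not merely a bookkeeping issue: geometric claim (a) is false. On a descent block $\lbra \ttt_{2k-2}(\gG), \ttt_{2k-1}(\gG)\rbra$ the maximum of $S$ is indeed at $\tu_{2k-1}(\gG)$, but the minimum need not sit at the right endpoint. Take $\gG=10$ and a walk with increments $5, -8, 11, -10$, so $S=(0,5,-3,8,-2,\dots)$: then $\ttt_1(\gG)=4$ (the drop $S_3-S_4=10$ is the first of size $\ge 10$), while $\min_{n\in\lbra 0, 4\rbra}S_n=S_2=-3<S_4=-2$. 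More fundamentally, the argmin of a descent block is not a $\gG$-extremum at all — the relevant minimum $\tu_{2k}(\gG)$ lives in the \emph{next} block — so sliding within the $\gG$-blocks $\lbra\ttt_{j-1}(\gG),\ttt_j(\gG)\rbra$ cannot produce a chain supported on $\{\tu_1(\gG),\dots,\tu_{2K}(\gG)\}$. And even setting that aside, when several $v_j$'s share a block your sliding move collapses them onto the same point, which the strict ordering forbids; this cannot happen after a preliminary wall-count reduction, but you attempt no such reduction before sliding.

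The paper's proof avoids all of this by decomposing along the $\gG$-\emph{stretches} $\lbra\tu_k(\gG),\tu_{k+1}(\gG)\rbra$ rather than along the $\gG$-blocks $\lbra\ttt_{j-1}(\gG),\ttt_j(\gG)\rbra$, and by ordering the moves so that the later ones are always well defined: first delete any spin domain contained entirely inside a single stretch (exploiting the fact that there is no counter-trend $\gG$-jump inside a stretch, so deletion gains energy), then reorder the at-most-one surviving wall attached to each extremum, only then slide each surviving wall to the canonical $\gG$-extremum it is attached to, and finally insert the missing pairs exactly at the extrema. Each step manifestly does not decrease the energy, and the ordering constraint is automatic because each move affects a single stretch. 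If you wish to salvage your route, the cleanest fix is to replace the block decomposition by the stretch decomposition and to prepend an "at most one wall per stretch" reduction before sliding; the "peel the leftmost pair" induction as written does not control where the remaining pairs land relative to $\ttt_2(\gG)$ and would in any case require the same reduction.
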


\begin{proof}
A proof can be found in Appendix C of \cite{CGH25+}; however, we provide an alternate version here, which will help the reader follow the initial step of the proof of the upper bound in Theorem \ref{th:dev_calF} (Section \ref{sec:proof_upp_bound}). For convenience, in this proof we drop the dependence in $\gG$ in the notations, so we write $t_k$, $u_k$ and $u_k^+$ instead of $t_k(\gG)$, $u_k(\gG)$ and $u_k^+(\gG)$. We fix $K$ and denote $u_0=0$ and $u_{2K+1}= t_{2K}=N_K$. 
Below, we consider that $u_k$, the left-most $\gG$-extremum between $t_{k-1}$ and $t_k$, is \emph{the canonical} $k$-th $\gG$-extremum.
Let us show that starting from any configuration $\sigma$ on the system $\{1, \dots, N_K\}$ we can apply to it transformations that do not decrease its energy, and end up with the configuration described in Fact \ref{th:fact_maximal_config}.

First, we modify the configuration to make sure that for every $k=1, \dots, 2K$ there exists at most one wall located at a position between $u_{k-1}$ and $u_{k+1}^+$ (inclusive) having the same type as $u_k$ (i.e., a wall from +1 to -1 for $k$ odd, a wall from $-1$ to $+1$ for $k$ even).  If such a wall exists, we refer to it as the wall attached to $u_k$. 
Indeed, consider for example $k$ odd so that $u_k$ is a $\gG$-maximum; if there are at least two walls from + to - between $u_{k-1}$ and $u_{k+1}^+$ then there is a domain of - spins fully inside $\lbra u_{k-1} , u_k^+ \rbra$ or a domain of + spins fully inside  $\lbra u_k , u_{k+1}^+ \rbra$. 
Now, by construction, inside an ascending (respectivelly, descending) $\gG$-stretch, there is no drop (respectivelly, rise) of size $\gG$ or larger, thus removing this spin domain increases the energy.

Secondly, we make sure  that for every $k=1, \dots, 2K-1$, provided they exist, the wall attached to $u_k$ is on the left of the wall attached to $u_{k+1}$.
Indeed, consider for example $k$ odd so that $u_k$ is a $\gG$-maximum; if there are walls attached to $u_k$ (so a wall from + to -) and to $u_{k+1}$ (so a wall from - to +) and this walls are not ordered accordingly, then they delimit a + domain which is fully inside the descending $\gG$-stretch $\lbra u_k , u_{k+1}^+ \rbra$ and removing this domain increases the energy. 

Once these walls are correctly ordered, we note that moving them to the (canonical) $\gG$-extrema they are attached to does not decrease the energy — in fact, if they were not already located at one of the multiple $\gG$-extrema this increases the energy.

Finally, if there are $\gG$-extrema that have no wall attached to, then also one of the two neighboring extrema (or, for $k\in\{1, 2K\}$, the neighboring $\gG$-extremum) also has no wall attached: adding the two missing walls exactly at the locations of the $\gG$-extrema under consideration does not decrease the energy — in fact the energy increases if and only if the height of the corresponding $\gG$-stretch is strictly larger than $\gG$. We end up with the configuration described in Fact \ref{th:fact_maximal_config}.
\end{proof}

Next, we explain how the sequence of $\gG$-extrema breaks the walk $S$ into independent portions. The times $t_k(\gG)$ we have introduced in \eqref{eq:def_t_k} are of course stopping times. In fact, using renewal theory, we obtain the very useful fact that the times $u_k(\gG)$ and $u_k^+(\gG)$, although they are not stopping times, are still renewal times.

\begin{fact}\label{th:fact_indep_portions}
	\begin{itemize}
	\item The portions 
$(S_{t_{k-1}(\gG)+n}-S_{t_{k-1}(\gG)})_{0\le n\le  u_k(\gG) - t_{k-1}(\gG) }$ 
for $k=1, 3, 5, \dots$ (with $t_0(\gG)=0$) are identically distributed.
\item The portions 
$(S_{u_k(\gG)+n}-S_{u_k(\gG)})_{0 \le n \le  u_{k}^+(\gG) - u_k(\gG) }$ 
for $k=1,3, 5, \dots$ are identically distributed.
\item The portions 
$(S_{u_k^+(\gG)+n}-S_{u_k^+(\gG)})_{0 \le n \le  t_{k}(\gG) - u_k^+(\gG) }$ 
for $k=1,3, 5, \dots$ are identically distributed.
\item The portions 
$(S_{t_{k-1}(\gG)+n}-S_{t_{k-1}(\gG)})_{0\le n\le  u_k(\gG) - t_{k-1}(\gG) }$ 
for $k=2, 4, 6, \dots$ (with $t_0(\gG)=0$) are identically distributed.
\item The portions 
$(S_{u_k(\gG)+n}-S_{u_k(\gG)})_{0 \le n \le  u_{k}^+(\gG) - u_k(\gG) }$ 
for $k=2, 4, 6, \dots$ are identically distributed.
\item The portions 
$(S_{u_k^+(\gG)+n}-S_{u_k^+(\gG)})_{0 \le n \le  t_{k}(\gG) - u_k^+(\gG) }$ 
for $k=2, 4, 6, \dots$ are identically distributed.
	\end{itemize}

Furthermore, all the above portions are independent. 

\end{fact}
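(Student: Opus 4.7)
The plan is to combine the strong Markov property at the stopping times $t_k(\gG)$ with a two-level renewal decomposition inside each macro-excursion. First, each $t_k(\gG)$ is a stopping time of the natural filtration of $(h_n)$, since it is defined iteratively as the first time at which the drawdown (resp.\ drawup) of the walk shifted to start at $t_{k-1}(\gG)$ reaches $\gG$. Applying the strong Markov property iteratively at $t_1(\gG), t_2(\gG), \dots$, the macro-excursions $\mathcal{E}_k:=(S_{t_{k-1}(\gG)+n}-S_{t_{k-1}(\gG)})_{0\le n\le t_k(\gG)-t_{k-1}(\gG)}$ are mutually independent; odd-indexed macro-excursions are iid (each distributed as a random walk started at $0$ and stopped at its first $\gG$-decrease), and even-indexed ones are iid. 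Since $u_k(\gG)$ and $u_k^+(\gG)$ are deterministic functionals of $\mathcal{E}_k$, sub-portions coming from different macro-excursions are automatically independent, and each identical-distribution claim reduces to a comparison within same-parity macro-excursions.

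For the three-way split within a single $\mathcal{E}_k$ (say $k$ odd; the even case is symmetric), I would first decompose $\mathcal{E}_k$ via its strict ascending ladder epochs $0=\sigma_0<\sigma_1<\ldots$\ . The pre-ladder pieces $X_j := (\mathcal{E}_k(\sigma_{j-1}+n)-\mathcal{E}_k(\sigma_{j-1}))_{0\le n\le \sigma_j-\sigma_{j-1}}$ are iid by the classical ladder decomposition. A direct comparison shows that the global drawdown of $\mathcal{E}_k$ equals $\max_j(-\min X_j)$, so, calling $X_j$ \emph{big} if $-\min X_j \ge \gG$ and \emph{small} otherwise, $J:=\inf\{j\ge 1:X_j\text{ is big}\}$ is geometric and $u_k(\gG)-t_{k-1}(\gG)=\sigma_{J-1}$. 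A standard factorization for iid sequences terminated at a geometric stopping then yields that the concatenation $X_1\cdots X_{J-1}$ (which is the $[t_{k-1}(\gG),u_k(\gG)]$-portion) and the truncation of $X_J$ at its first entry into $(-\infty,-\gG]$ (which is the $[u_k(\gG),t_k(\gG)]$-portion) are independent, with the expected marginal laws (iid small pieces on one side, a single stopped big piece on the other).

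Finally, I split the truncated $X_J$ at $u_k^+(\gG)-u_k(\gG)$, which is its last return to $0$ before the $\gG$-drop. For this I iterate the strong-Markov / geometric-stopping trick one level deeper: inside $X_J$, the successive returns to $0$ before the $\gG$-drop form a renewal structure, the inter-return sub-pieces are iid (by the strong Markov property applied at each return to $0$), and the number of such returns is geometric. The analogous factorization then yields independence of the concatenation of zero-to-zero sub-pieces (the $[u_k(\gG),u_k^+(\gG)]$-portion) from the final zero-to-below-$(-\gG)$ sub-piece (the $[u_k^+(\gG),t_k(\gG)]$-portion). When $\mu$ has no atoms, the walk almost surely never returns exactly to $0$, so $u_k(\gG)=u_k^+(\gG)$ and this level is vacuous; it is only non-trivial in the atomic regime, where multiple $\gG$-extrema occur with positive probability. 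The main obstacle is precisely this second-level split: unlike $t_k(\gG)$, the time $u_k^+(\gG)$ is not a stopping time, so one cannot invoke the strong Markov property directly and must instead iterate the renewal / geometric-stopping argument; the atomic case is the one requiring the most care.
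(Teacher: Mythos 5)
Your plan is correct and relies on the same toolkit as the paper's proof — strong Markov at the stopping times $t_k(\gG)$ to separate macro-excursions, and a two-level ladder/renewal decomposition to split each macro-excursion at the two $\gG$-extremum times — but you organize the two levels in the opposite order. The paper works with the \emph{weak} ascending record times $\rho_0=0$, $\rho_{j+1}=\inf\{n>\rho_j:S_n\ge S_{\rho_j}\}$: these produce iid excursions, the first one of depth $\ge\gG$ starts at $u_1^+(\gG)$, so the geometric-stopping factorization first isolates $u_1^+(\gG)$; the remaining prefix $[0,u_1^+(\gG)]$ is a concatenation of iid small weak excursions, and $u_1(\gG)$ is recovered as the endpoint of the last one with positive overshoot (``in the same spirit'', which amounts to a second geometric-stopping argument applied to the reversed iid prefix). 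You instead take \emph{strict} ascending ladder epochs first, so that $u_1(\gG)=\sigma_{J-1}$ is isolated at the first level, and then recover $u_1^+(\gG)$ as the last return to $0$ inside the truncated big piece $X_J$. This ordering is valid, but your second level deserves a bit more care than ``strong Markov at each return to $0$'': the truncated $X_J$ is a \emph{conditional} object (conditioned both to precede the next strict exceedance and to reach depth $\ge\gG$), so the zero-to-zero sub-pieces are iid only after conditioning on being small \emph{and} having zero overshoot, and the factorization needed is the joint one ``index of first deep sub-excursion $\le$ index of first positive-overshoot sub-excursion''. You correctly flag this as the obstacle and correctly note it is non-vacuous only in the atomic case; spelling out that joint conditioning is the remaining work. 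In short: same ideas, opposite order of the two splits, with the subtlety concentrated on the big piece rather than on the small prefix.
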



An important consequence for us is that:
\begin{itemize}
	\item the portions 
$(S_{u_k(\gG)+n}-S_{u_k(\gG)})_{0 \le n \le  u_{k+1}(\gG) - u_k(\gG) }$ 
for $k=1, 3, 5, \dots$, which we call the descending $\gG$-stretches, are identically distributed;
\item the portions 
$(S_{u_k(\gG)+n}-S_{u_k(\gG)})_{0 \le n \le  u_{k+1}(\gG) - u_k(\gG) }$ 
for $k=2, 4, 6, \dots$, which we call the ascending $\gG$-stretches, are identically distributed;
\end{itemize}
and all those portions, generically called the $\gG$-stretches, are independent.


\begin{proof}
	We use fluctuation theory for the random walk $S$. We define the following stopping times:
	\begin{equation}
		\rho_0:=0 \, , 
		\quad \text{ and, for } k\ge 0 ,  \quad 
		\rho_{k+1}:=
		\inf\{n>\rho_k : S_n\ge S_{\rho_k}\}\, ,
	\end{equation}
which are called the weak ascending record times. They decompose the walk $S$ in i.i.d. portions, called excursions:
	\begin{equation}
		\left(S_{\rho_k+n}-S_{\rho_k}\right)_{0\le n \le \rho_{k+1}-\rho_k}\, , 
		\qquad k=1, 2, \dots
	\end{equation}
	Then the first occurence of a drop of size at least $\gG$ in $S$ is determined by the first excursion with depth $\gG$ or larger. Let us denote by $p_\gG$ the probability for an excursion to have a depth not smaller than $\gG$. The random walk up to time $t_1(\gG)$ can be sampled as follows. Toss a coin:
	\begin{itemize}
		\item with probability $p_\gG$, decide that the first excursion has depth not smaller than $\gG$ and sample it conditionally on this fact; then repeat the procedure to sample the next excursions. 
		\item with probability $1-p_\gG$, decide that the first excursion has depth not smaller than $\gG$ and sample it only partially: sample it up to the point where it reaches depth $\gG$, i.e., sample a realisation of the walk conditioned on hitting $(-\infty, -\gG]$ before returning in $[0, \infty)$. 
	\end{itemize} 
	The end of this partial excursion is the time $t_1(\gG)$, and its starting point is $u_1^+(\gG)$. 
	In view of this sampling procedure, we get that the portions of walk from $0$ to $u_1^+(\gG)$ and from $u_1^+(\gG)$ to $t_1(\gG)$ are independent. 
	
	Considering the excursions that preceed time $u_1^+(\gG)$, we can in the same spirit break the portion from $0$ to $u_1^+(\gG)$ into two independent portions: from $0$ to $u_1(\gG)$ and from $u_1(\gG)$ to $u_1^+(\gG)$. 
	
	After the stopping time $t_1(\gG)$, we repeat the same arguments, alternating weak descending record times and weak ascending record times.
\end{proof}

\begin{remark}\label{rem:LLN}
	Throughout Sections~\ref{sec:max_ener} and~\ref{sec:proof_main_thms}, we will repeatedly use the law of large numbers (LLN), taking advantage of the i.i.d.\ structure provided by Fact~\ref{th:fact_indep_portions}. We record here two elementary but useful observations.
	
	\medskip
	\noindent
	\textbf{First,} if \((X_n)\) is a sequence of non-negative random variables all stochastically dominated by a common integrable random variable, then almost surely
	\[
	\frac{X_n}{n} \longrightarrow 0 \quad \text{as } n \to \infty.
	\]
	An illustrative application of this is the following: since \(t_{2K}(\gG)\) is the  sum of the i.i.d.\ random variables \(t_{2k}(\gG) - t_{2(k-1)}(\gG)\) for \(k = 1, \dots, K\) (denoting $t_0(\gG)=0$), the LLN yields the almost sure convergence
	\[
	\frac{t_{2K}(\gG)}{K} \to \mathbb{E}[t_2(\gG)] \quad \text{as } K \to \infty,
	\]
	but the same holds also for the ratios \(\frac{u_{2K}(\gG)}{K}\), \(\frac{t_{2K+1}(\gG)}{K}\) and \(\frac{u_{2K+1}(\gG)}{K}\).

	\medskip
	\noindent
	\textbf{Second,} the independence assumption in the LLN can sometimes be relaxed. Suppose \((X_n)_{n \geq 1}\) is a sequence of identically distributed integrable random variables such that the subsequences \((X_{2n})_{n \geq 0}\) and \((X_{2n+1})_{n \geq 0}\) are each independent. Then, applying the LLN separately to both subsequences yields the LLN for the full sequence.
\end{remark}

We introduce the following (positive) variables:
\begin{equation}
	H_\gG^\downarrow = S_{u_1(\gG)} - S_{u_2(\gG)} \, ,  \qquad  
	H_\gG^\uparrow = S_{u_3(\gG)} - S_{u_2(\gG)} \, , 
\end{equation}
\begin{equation}
	L_\gG^\downarrow = u_2(\gG) - u_1(\gG) \, ,  \qquad  
	L_\gG^\uparrow = u_3(\gG) - u_2(\gG) \, ,
\end{equation}
which are distributed as the heights and lengths of generic descending and ascending $\gG$-stretches. We note that, due to Fact \ref{th:fact_indep_portions}, $u_1(\gG)$ is distributed as $u_3(\gG)-t_2(\gG)$, so $L_\gG^\downarrow + L_\gG^\uparrow$ is distributed as $t_2(\gG)$.



Using the LLN, we derive the following formula for the limiting maximal energy density. 

\begin{corollary}\label{th:cor_expression_calM}
	For every $J>0$, we have, recalling that $\gG=2J$, 
	\begin{equation}
		\calM(J)=
		\frac{\E\left[H_\gG^\downarrow + H_\gG^\uparrow \right] - 2\gG }
		{\E\left[L_\gG^\downarrow + L_\gG^\uparrow\right]} \, .
	\end{equation}
\end{corollary}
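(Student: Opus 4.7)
The plan is to compute $\calM(J)$ along the subsequence of system lengths $N_K := t_{2K}(\gG)$ (with $\gG = 2J$) under boundary conditions $++$, since for this choice Fact~\ref{th:fact_maximal_config} gives an explicit description of the maximiser of $H_{N_K, J, h}^{++}$. By Lemma~\ref{th:def_free_ener} the resulting subsequence limit must equal $\calM(J)$.

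The maximiser has $2K$ walls (located at $u_1(\gG), \ldots, u_{2K}(\gG)$) and starts and ends with spin $+1$, so the wall term contributes $-2\gG K$. Telescoping the field contribution over the constant-spin intervals yields
\begin{equation*}
M_{N_K, J, h}^{++} = -2\gG K + \sum_{j=1}^K H_{\gG, j}^\downarrow + \sum_{j=1}^{K-1} H_{\gG, j}^\uparrow + R_K,
\end{equation*}
where $H_{\gG, j}^\downarrow := S_{u_{2j-1}(\gG)} - S_{u_{2j}(\gG)}$ and $H_{\gG, j}^\uparrow := S_{u_{2j+1}(\gG)} - S_{u_{2j}(\gG)}$ are the heights of the generic descending and ascending $\gG$-stretches enclosed within $[0, N_K]$, and $R_K := S_{u_1(\gG)} + \bigl(S_{N_K} - S_{u_{2K}(\gG)}\bigr)$ collects the two boundary contributions (a partial ascending stretch at each end).

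Next, by Fact~\ref{th:fact_indep_portions}, the families $(H_{\gG, j}^\downarrow)_{j\ge 1}$ and $(H_{\gG, j}^\uparrow)_{j\ge 1}$ are each i.i.d.\ with common laws $H_\gG^\downarrow$ and $H_\gG^\uparrow$ respectively; their means are finite under the standing assumption of finite second moment of $\mu$, by classical random-walk fluctuation theory. The LLN therefore yields
\begin{equation*}
\frac{1}{K}\Bigl(\sum_{j=1}^K H_{\gG, j}^\downarrow + \sum_{j=1}^{K-1} H_{\gG, j}^\uparrow\Bigr) \xrightarrow[K\to\infty]{\text{a.s.}} \E\bigl[H_\gG^\downarrow + H_\gG^\uparrow\bigr].
\end{equation*}
The first component of $R_K$ is a fixed integrable random variable, and the second is, by Fact~\ref{th:fact_indep_portions}, identically distributed in $K$ with finite mean; dividing by $K$, both vanish almost surely by the first observation in Remark~\ref{rem:LLN}. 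Similarly, $N_K = \sum_{k=1}^K \bigl(t_{2k}(\gG) - t_{2(k-1)}(\gG)\bigr)$ is an i.i.d.\ sum of copies of $t_2(\gG) \stackrel{d}{=} L_\gG^\downarrow + L_\gG^\uparrow$ (the distributional equality being recorded right after the definitions of these variables), so the LLN gives $N_K/K \to \E[L_\gG^\downarrow + L_\gG^\uparrow]$ almost surely, with $N_K \to \infty$ in particular.

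The conclusion follows upon writing $M_{N_K, J, h}^{++}/N_K = \bigl(M_{N_K, J, h}^{++}/K\bigr) \big/ \bigl(N_K/K\bigr)$, taking the ratio of the two almost sure limits obtained above, and using Lemma~\ref{th:def_free_ener} to identify this ratio with $\calM(J)$. I do not anticipate a substantial obstacle: the proof amounts to carefully unpacking the energy of the optimal configuration and then applying the LLN; the only mildly delicate point is the negligibility of the boundary contributions, which is handled cleanly by Remark~\ref{rem:LLN}.
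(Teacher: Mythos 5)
Your proof is correct and follows essentially the same route as the paper: take the subsequence $N_K = t_{2K}(\gG)$ so Fact~\ref{th:fact_maximal_config} identifies the maximiser, decompose the maximal energy into the wall term $-2\gG K$ plus the stretch heights plus boundary remainders, apply the LLN separately to the descending and ascending height sequences using the i.i.d.\ structure from Fact~\ref{th:fact_indep_portions} and Remark~\ref{rem:LLN}, and divide by $N_K/K \to \E[t_2(\gG)] = \E[L_\gG^\downarrow + L_\gG^\uparrow]$. The only cosmetic difference is that you spell out the negligibility of the boundary remainder $R_K$ more explicitly than the paper does.
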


\begin{remark}
	In the case where the law $\mu$ is symmetric, the couple $(H_\gG^\downarrow, L_\gG^\downarrow )$ is distributed as  $(H_\gG^\uparrow, L_\gG^\uparrow )$ and the above formula takes the slightly simpler form:
	\begin{equation}
		\calM(J)=
		\frac{\E\left[H_\gG^\downarrow\right] - \gG}
		{\E\left[L_\gG^\downarrow\right]} \, .
	\end{equation}

	If $\mu$ is not symmetric, we cannot a priori claim that the couple $(H_\gG^\downarrow, L_\gG^\downarrow )$ is distributed as $(H_\gG^\uparrow, L_\gG^\uparrow )$. 
	However, it still holds that 
	$\E\left[H_\gG^\uparrow\right] = \E\left[H_\gG^\downarrow\right]$. Indeed, by the LLN $S_n/n$ converges almost surely towards 0 as $n\to\infty$, in particular so does $S_{u_{2K}(\gG)}/u_{2K}(\gG)$; but using the LLN twice, this ratio also converges towards the ratio $\frac{E\left[H_\gG^\downarrow - H_\gG^\uparrow\right]}{\E\left[ L_\gG^\downarrow + L_\gG^\uparrow \right]}$, whose numerator must therefore be zero.
\end{remark}

\begin{proof}[Proof of Corollary \ref{th:cor_expression_calM}]
	Let us take $K\in \N$ and $N=N_K=t_{2K}(\gG)$. We use the configuration given in Fact \ref{th:fact_maximal_config}, which maximises the Hamiltonian. For this configuration, we have
	\begin{equation}
		M_{N_K, J, h}^{++}=H_{N_K, J, h}^{++}(\sigma) 
		= -\Gamma \times  2K + S_{u_1}
		+ \left( \sum_{1\le k \le 2K-1} |S_{u_{k+1}}-S_{u_k}| \right)
 		+ S_{t_{2K}}-S_{u_{2K}} \, .
	\end{equation}
The variables $ |S_{u_{k+1}}-S_{u_k}|, k=1, 3, \dots$ being i.i.d., as well as the variables $ |S_{u_{k+1}}-S_{u_k}|, k=2, 4, \dots$, the LLN yields that, almost surely,
	\begin{equation}
		\frac{M_{N_K, J, h}^{++}}{K} 
		\underset{K\to\infty}\longrightarrow 
		-2\Gamma  
		+ \E[H_\gG^\downarrow+H_\gG^\uparrow] \, .
	\end{equation}
Furthermore, as already stressed in Remark \ref{rem:LLN}, almost surely,
	\begin{equation}
		\frac{N_K}{K}= \frac{t_{2K}(\gG)}{K} 
		\underset{K\to\infty}\longrightarrow 
		\E\left[t_2(\gG)\right]=
		\E\left[L_\gG^\downarrow+L_\gG^\uparrow\right] \, .
	\end{equation}
	Since, by Lemma \ref{th:def_free_ener}, $\calM(J)$ is the almost sure limit of $M_{N_K, J, h}^{++}/N_K$, the corollary follows.
\end{proof}

\subsection{Statistics of the $\gG$-stretches for large $\gG$ and proof of Theorem \ref{th:first_order_calM}}

We now describe the statistics of the $\gG$-stretches when $\gG$ is large. In \cite{CGH25+} (see Proposition 1.3 there) it was shown using Donsker's invariance principle that, under the Brownian scaling, the heights and lenghts of generic $\gG$-stretches converge in law towards the corresponding variables for a Brownian motion, which have been studied in \cite{NP89}. Let us state this result.

\begin{fact}\label{th:fact_Donsker}
	Assume \eqref{eq:assump_mu_initial}. Then, as $\gG\to\infty$, the couples
	\begin{equation}
		\left(\frac{H^\downarrow_\gG-\gG}{\gG}, \frac{L_\gG^\downarrow}{\gG^2/\vartheta^2}\right) 
		\quad \text{ and } \quad 
		\left(\frac{H^\uparrow_\gG-\gG}{\gG}, \frac{L_\gG^\uparrow}{\gG^2/\vartheta^2}\right)
	\end{equation}
	both converge in distribution towards the same law on $[0, \infty)^2$, which is explicitely described in \cite{NP89}. This law is absolutely continuous with respect to the Lebesgue measure on $[0, \infty)^2$ and its marginals both have expectation 1.
\end{fact}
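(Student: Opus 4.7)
The plan is to deduce the result from Donsker's invariance principle combined with the explicit analysis of extrema of a standard Brownian motion performed in \cite{NP89}.

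First, I would rescale. Under \eqref{eq:assump_mu_initial}, Donsker's theorem yields the convergence in distribution
\[
\left(\tilde{S}^{(\gG)}_t\right)_{t\ge 0} := \left(S_{\lfloor \gG^2 t/\vartheta^2 \rfloor}/\gG\right)_{t\ge 0} \;\Longrightarrow\; (B_t)_{t\ge 0} \quad \text{as } \gG\to\infty,
\]
in the Skorokhod topology on $D([0,\infty),\R)$, where $B$ is a standard Brownian motion. With this scaling, a drop of size $\gG$ for $S$ corresponds to a drop of size $1$ for $\tilde{S}^{(\gG)}$, and a time of order $\gG^2/\vartheta^2$ for $S$ corresponds to a time of order $1$. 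Hence the couples appearing in the statement are exactly the height and length of the first descending (resp.\ ascending) $1$-stretch of $\tilde{S}^{(\gG)}$.

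Second, I would view the $\gG$-extremum construction \eqref{eq:def_t_k}--\eqref{eq:new-u} as defining functionals on path space, and check that the associated height/length maps are continuous at almost every Brownian path. The potential discontinuities --- paths with two distinct times realizing the maximum on some $[\ttt_k,\ttt_{k+1}]$, or paths that cross the threshold with a flat portion --- form a Wiener-null event, since Brownian motion almost surely attains strict local extrema at isolated points and crosses each deterministic level transversally. The continuous mapping theorem then delivers the joint convergence in distribution. By the reflection symmetry of $B$, the ascending and descending Brownian $1$-stretches have the same joint law, so both couples in the statement converge to the same limit, regardless of whether $\mu$ itself is symmetric.

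Third, the explicit description of the limit (in particular its absolute continuity with respect to Lebesgue measure on $[0,\infty)^2$, and the fact that both marginals have expectation $1$) comes from the Brownian fluctuation computations in \cite{NP89}. The height-excess marginal can alternatively be identified directly through the strong Markov property of $B$ at the absolute minimum of the first descending $1$-stretch, which, combined with standard excursion theory, shows that it is exponentially distributed with mean $1$ in the chosen scaling; the length marginal is computed in \cite{NP89} via a similar fluctuation-theoretic argument.

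The main obstacle is the continuity step: one must carefully ensure that the iterative extraction of $1$-extrema is Skorokhod-continuous for almost every Brownian path, in particular ruling out hidden near-coincidences of extrema that could shift the Skorokhod limit of the functional. Once this is granted --- and it reduces to standard properties of Brownian local extrema --- the remainder is a routine combination of Donsker's theorem and the results of \cite{NP89}.
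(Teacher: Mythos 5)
The paper does not prove Fact \ref{th:fact_Donsker} itself but cites Proposition~1.3 of \cite{CGH25+}, which it explicitly describes as derived ``using Donsker's invariance principle'' together with the Brownian computations of \cite{NP89}; your argument (Donsker scaling, almost-sure Skorokhod-continuity of the $\Gamma$-stretch extraction at Brownian paths, reflection symmetry of Brownian motion to equate the ascending and descending limits, and \cite{NP89} for the explicit limit law) is precisely that route. The sketch is sound, and your remark correctly isolates the delicate point, namely checking that strict uniqueness of extrema and transversal threshold crossings rule out discontinuities of the stretch-extraction functional on a set of full Wiener measure.
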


We deduce the following first order estimates.
\begin{corollary} \label{th:cor_Donsker} 
	Assume \eqref{eq:assump_mu_initial}. Then, as $\gG\to\infty$, 
	\begin{equation}
		\E\left[H^\downarrow_\gG\right]\sim \E\left[H^\uparrow_\gG\right] \sim 2\gG\, ,
	\end{equation}
	\begin{equation}
		\E\left[L^\downarrow_\gG\right]\sim \E\left[L^\uparrow_\gG\right] \sim \frac{\gG^2}{\vartheta^2}\, .
	\end{equation}
\end{corollary}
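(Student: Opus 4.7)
The content of Fact \ref{th:fact_Donsker} is precisely convergence in law, together with the assertion that the marginal limits have expectation $1$. Hence Corollary \ref{th:cor_Donsker} amounts to upgrading these convergences in distribution to convergence in $L^1$, i.e.\ to establishing uniform integrability of the families $\{(H_\gG^\downarrow - \gG)/\gG\}_{\gG\ge 1}$, $\{L_\gG^\downarrow \vartheta^2/\gG^2\}_{\gG\ge 1}$ and their ascending analogues. My plan is to prove uniform integrability of each of these families and then to invoke Vitali's convergence theorem.

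For the heights, I would decompose $H_\gG^\downarrow - \gG = A_\gG + B_\gG$, with $A_\gG := S_{u_1(\gG)} - \gG - S_{t_1(\gG)} \ge 0$ the overshoot past the level $S_{u_1(\gG)}-\gG$ at the first $\gG$-decrease, and $B_\gG := S_{t_1(\gG)} - S_{u_2(\gG)} \ge 0$ the additional descent between $t_1(\gG)$ and the subsequent $\gG$-minimum. The term $A_\gG$ is a classical random-walk overshoot at a ladder-type stopping time: under the finite-variance hypothesis \eqref{eq:assump_mu_initial}, standard renewal theory gives $\sup_\gG \E[A_\gG^2] < \infty$. The term $B_\gG$ is a functional of a portion of the walk that is independent of the past up to $t_1(\gG)$ by Fact \ref{th:fact_indep_portions}, and a parallel argument yields a uniform second moment bound. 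Together these estimates give $\E[(H_\gG^\downarrow - \gG)^2] = O(1)$, hence $L^2$-boundedness (a fortiori uniform integrability) of $(H_\gG^\downarrow - \gG)/\gG$.

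For the lengths the key bound is $\E[(L_\gG^\downarrow)^2] \le C\,\gG^4$. My proposed route is through Skorokhod embedding: realize $S_n = B_{T_n}$, with $B$ a standard Brownian motion and i.i.d.\ stopping times $T_n - T_{n-1}$ satisfying $\E[T_1] = \vartheta^2$, so that the discrete $\gG$-extrema of $S$ correspond to $\gG$-extrema of the continuous process $B$ read at the embedded times. For $B$ the $\gG$-stretch lengths are known explicitly via \cite{NP89} and possess all moments, of natural order $\gG^4$ for the second moment; the martingale $T_n - \vartheta^2 n$, combined with Wald's identity, then allows one to transfer these moment bounds back to the walk index $L_\gG^\downarrow$. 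Combining uniform integrability with Fact \ref{th:fact_Donsker} yields $\E[H_\gG^\downarrow] \sim 2\gG$ and $\E[L_\gG^\downarrow] \sim \gG^2/\vartheta^2$; the ascending case is treated symmetrically. The main obstacle is precisely this last moment transfer under the minimal hypothesis $\vartheta^2 < \infty$: naive moment bounds on first-passage-type times typically require stronger integrability of $\mu$, and care is needed to make sure only the finite second moment is actually exploited.
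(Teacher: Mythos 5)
Your framing is right: the Corollary is Fact~\ref{th:fact_Donsker} upgraded from convergence in law to convergence in $L^1$, so uniform integrability of the four rescaled families plus Vitali is the correct strategy, and this is indeed what the paper does. But both of your proposed routes to uniform integrability require more than the finite second moment \eqref{eq:assump_mu_initial} that is available, and this is exactly the crux the paper has to work around.

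For the heights, the claim that $\sup_\gG \E[A_\gG^2]<\infty$ for the ladder-height overshoot $A_\gG$ is false under mere finite variance. By standard renewal theory the overshoot at level $\gG$ converges in law to a distribution with density proportional to $\p[H_1>\cdot]$, and its $k$-th moment is controlled by $\E[H_1^{k+1}]$; for a centered walk with $\vartheta^2<\infty$ one only knows $\E[H_1]<\infty$, so the overshoot need not even be uniformly bounded in $L^1$, let alone $L^2$ (that would need $\E[H_1^3]<\infty$, i.e., roughly a fourth moment of $\mu$). The paper proves something strictly weaker but sufficient: a tail bound $\sup_{\gG>\gG_0}\p[H_\gG^\downarrow\ge x\gG]=O(\log(x)/x^2)$, obtained by splitting on $\{t_2(\gG)\le \lfloor 3\log x\rfloor\lfloor C\gG^2\rfloor\}$, applying Kolmogorov's maximal inequality on that event (this is where the second moment is used, sharply), and controlling the complementary event by a geometric estimate. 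The $\log(x)/x^2$ decay is integrable, hence gives uniform integrability, but no $L^2$ bound is attempted or available.

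For the lengths, the Skorokhod route also overshoots the hypothesis: the embedding times $T_n-T_{n-1}$ have $\E[T_1]=\vartheta^2$ but $\E[T_1^2]<\infty$ requires a fourth moment of $\mu$, so the ``moment transfer'' back from Brownian stretch lengths to $L_\gG^\downarrow$ cannot be done at the level of second moments under \eqref{eq:assump_mu_initial}. You flag this obstacle yourself, but it is not a technicality to be smoothed over — it is where the approach breaks. The paper instead proves a direct geometric tail bound, $\p[t_1(\gG)>n\lfloor C\gG^2\rfloor]\le e^{-n}$, by partitioning $\lbra 0, n\lfloor C\gG^2\rfloor\rbra$ into $n$ blocks of length $\lfloor C\gG^2\rfloor$ and using the Markov property together with the tightness of $t_1(\gG)/\gG^2$ (which follows from Fact~\ref{th:fact_Donsker}). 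This gives exponential tails for $t_1(\gG)/\gG^2$ using nothing beyond the second moment, and then $L_\gG^\downarrow+L_\gG^\uparrow\law t_2(\gG)$ finishes. In short, your plan aims at $L^2$-boundedness, which is unavailable; the paper establishes uniform integrability through tail estimates tuned to exactly the second moment, and for the heights the critical scale $\log(x)/x^2$ shows there is genuinely no room to spare.
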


\begin{proof}
To derive the corollary from Fact \ref{th:fact_Donsker}, it is sufficient to prove that for large enough $\gG_0$, the families $\left(\frac{H^\downarrow_\gG}{\gG}\right)_{\gG>\gG_0}$, $\left(\frac{H^\uparrow_\gG}{\gG}\right)_{\gG>\gG_0}$, $\left(\frac{L_\gG^\downarrow}{\gG^2}\right)_{\gG>\gG_0}$ and $\left(\frac{L_\gG^\uparrow}{\gG^2}\right)_{\gG>\gG_0}$ are uniformly integrable.

We first treat the case of the lengths. Since $L_\gG^\downarrow + L_\gG^\uparrow$ is distributed as $t_2(\gG)$, it is enough to show that the families $\left(\frac{t_1(\gG)}{\gG^2}\right)_{\gG>\gG_0}$ and $\left(\frac{t_2(\gG)-t_1(\gG)}{\gG^2}\right)_{\gG>\gG_0}$ are unifomly integrable for large enough $\gG$. Those two families can be treated similarly, we focus on the first one. Using Fact \ref{th:fact_Donsker}, let us take $\gG_0$ large enough so that, for some large $C$, for every $\gG>\gG_0$, 
\begin{equation}
	\p\left[t_1(\gG) >   C \gG^2   \right] \le e^{-1}	
\end{equation} 
Denoting $L=\left\lfloor C \gG^2 \right\rfloor$, we bound the probability that there is no drop of size $\gG$ in $[\![0, nL]\!]$ by the probability that for all $k=0, \dots, n-1$ there is no drop of size $\gG$ in $[\![kL, (k+1)L]\!]$ to derive that for $\gG>\gG_0$ and every integer $n$:
\begin{equation}\label{eq:uniform_integr_t_1}
\bbP\left[ t_1(\gG)>n \left\lfloor C \gG^2 \right\rfloor \right] 
\le \bbP\left[ t_1(\gG) >  \left\lfloor C \gG^2 \right\rfloor \right]^n
\le e^{-n}
\end{equation}
Hence the family $\left(\frac{t_1(\gG)}{\gG^2}\right)_{\gG>\gG_0}$ is uniformly integrable.

Concerning the heights, let us focus on the family $\left(H_\gG^\downarrow\right)_{\gG > \gG_0}$. 
We are going to show that, as $x\to\infty$, 
\begin{equation}\label{eq:bound_unif_HgG}
	\sup_{\gG> \gG_0} \p[H_\gG^\downarrow \ge x \gG] = O\left(\frac{\log (x)}{x^2} \right)
\end{equation}
which readily implies uniform integrability of the family $\left(\frac{H^\downarrow_\gG}{\gG}\right)_{\gG>\gG_0}$.
To do so, we observe, by writing 
\begin{equation}
	\p[t_2(\gG) > 2 t ]\le \p[t_1(\gG) > t ]+ \p[t_2(\gG)-t_1(\gG) >t] \, , 
\end{equation}
that a statement similar to \eqref{eq:uniform_integr_t_1} holds with $t_1(\gG)$ replaced by $t_2(\gG)$ and we observe that if $H_\gG^\downarrow \ge x \gG$, and $t_2(\gG) \le \lfloor 3 \log(x) \rfloor \lfloor C\gG^2 \rfloor  $, then
\begin{equation}
	\sup_{0\le n \le \lfloor 3 \log(x) \rfloor \lfloor C\gG^2 \rfloor } |S_n| \ge \frac{x \gG}{2} \, .
\end{equation}
The probability of the latter event can be controlled using Kolmogorov's inequality. 
So we write:
\begin{equation}
	\begin{aligned}
		\p\left[H_\gG^\downarrow \ge x \gG\right] 
		& \le \p \left[\sup_{0\le n \le   \lfloor 3\log(x) \rfloor  \lfloor C\gG^2 \rfloor } |S_n| \ge \frac{x \gG}{2} \right] 
		+ \p\left[t_2(\gG) >  \lfloor 3\log(x) \rfloor  \lfloor C\gG^2 \rfloor \right]\\
		& \le \frac{  \lfloor 3\log(x) \rfloor \lfloor C\gG^2 \rfloor \vartheta^2}{\left(\frac{x \gG}{2}\right)^2} 
		+ \exp(-  \lfloor 3\log(x) \rfloor )  \\
		& \le 12 C \vartheta^2 \frac{ \log(x) }{x^2} + \frac{e}{x^3}\, .
	\end{aligned}
\end{equation}
This yields \eqref{eq:bound_unif_HgG} and the proof is complete.
\end{proof}

Theorem \ref{th:first_order_calM} readily follows from Corollary \ref{th:cor_expression_calM} and Corollary \ref{th:cor_Donsker}. 

\begin{remark}\label{rem:excess_is_large}
	The variables $H_\gG^\downarrow$ and $H_\gG^\uparrow$ are by definition not smaller than $\gG$. Fact \ref{th:fact_Donsker} states that the excess over $\gG$ is also of order $\gG$, which is a crucial feature for our bounds in Section \ref{sec:proof_main_thms} to be precise. Indeed, let us consider the configuration with maximal energy described in Fact \ref{th:fact_maximal_config}. When switching a whole domain of this configuration to opposite spin, not only does this always result in an energy loss, but most often in a \emph{large} energy loss. This advocates the idea (which we exploit rigorously in the proof of Theorem \ref{th:cor_dev_free_ener}, in Section \ref{sec:proof_main_thms}) that the configurations contributing the most to the partition function should in some sense comply with the process of $\gG$-extrema.
\end{remark}

\subsection{Environment around a $\gG$-extremum}\label{sec:around_extremum}

Let us introduce the following random piece of trajectory:
\begin{equation}
	 u_{-1}:=u_1(\gG) - u_2(\gG) < 0  \, , \qquad u_{+1}^+:= u_3^+(\gG)- u_2(\gG) >0 \, , 
\end{equation}
\begin{equation}
	 S^{(\gG)}
	=  (S^{(\gG)}_n)_{u_{-1}\le n \le u_{+1}^+ } 
	:=  (S_{u_2(\gG)+n}- S_{u_2(\gG)})_{u_{-1} \le n \le u_{+1}^+ } 
\end{equation}
In words, $S^{(\gG)}$ is the portion of trajectory around the $\gG$-minimum $u_2(\gG)$ consisting of the descending $\gG$-stretch on its left and the ascending $\gG$-stretch on its right, recentered around $u_2(\gG)$.

As in Section \ref{sec:intro_results_free_ener}, we extend $S$ to the whole of $\Z$ by first extending $h$ to an i.i.d. sequence $(h_n)_{n\in\Z}$ and setting:
	\begin{equation}
		S_0=0\, , \qquad 
		S_n = \sum_{i=1}^n h_i \text{ for } n\ge 1\, , \quad  
		S_n= - \sum_{i=n+1}^0 h_i \text{ for } n\le -1 \, .
	\end{equation}

\begin{remark}\label{rem:rotation_invariance}
	Using the classical observation that $(-S_{-n})_{n\in \Z}$, the rotation of  $(S_n)_{n\in \Z}$ around 0, is distributed as $(S_n)_{n\in \Z}$ itself, we obtain that the law of a descending $\gG$-stretch is invariant under rotation: $(S_{u_2(\gG)} -S_{u_2(\gG)  - n})_{0\le n \le u_2(\gG) - u_1(\gG)}$ is distributed as $(S_{u_2(\gG)+n}- S_{u_2(\gG)})_{u_{-1} \le n \le u_2(\gG) - u_1(\gG)}$. Similarly for an ascending $\gG$-stretch. 
	Consequently, the environment seen around a $\gG$-maximum (defined analogously as above) is distributed as the rotation of the environment seen around a $\gG$-minimum, i.e., as $\left(-S^{(\gG)}_{-n}\right)_{-u_{+1}^+ \le n \le - u_{-1}}$.
\end{remark}





Now, we define $(Z_n)_{n\in \Z}$, which is the limit of process $S^{(\gG)}$ as $\gG\to\infty$ in the sense of the finite-dimensional distributions. Let us describe it. $(Z_n)_{n\ge 0}$ is the random walk $(S_n)_{n\ge 0}$ started at 0 and conditioned not to visit $(-\infty, 0)$. A rigorous definition can be found in \cite{Bertoin1994}: it is a Doob's transform of $S$. 
Similarly, $(Z_n)_{n\le 0}$ is the random walk $(S_n)_{n\le 0}$ with $S_0=0$ and conditioned not to visit $(-\infty, 0]$ outside time 0. 
To define it rigorously, first build $(S_n)_{n\ge 0}$ started from 0 and conditioned not to visit  $[0, \infty)$ afterwards (using \cite{Bertoin1994} again), then apply a rotation around 0, so consider $(-S_{-n})_{n \le 0}$.

To prove the next lemma, we will have to control the convergence of $S^{(\gG)}$ towards $Z$ beyond the finite-dimensional distribution convergence. 
Before stating the lemma, we define two random times, with respect to $S^{(\gG)}$:
\begin{equation}
	\tau_{\gG/2}^- : = \sup\left\{u_{-1} \le n \le 0 : S^{(\gG)}_n \ge  \frac{\gG}{2}\right\}\, ,
\end{equation}
\begin{equation}
	\tau_{\gG/2}^+ : = \inf\left\{0 \le n \le u_{+1}^+ : S^{(\gG)}_n \ge  \frac{\gG}{2}\right\} \, .
\end{equation}

\begin{lemma}\label{th:lem_Elog_fine}
	Assume \eqref{eq:assump_mu_initial}. As $\gG\to\infty$, the expectations 
	\begin{equation}\label{eq:lem_Elog_fine_expectations}
	\E
	\left[
	\log
	\left(\sum_{\tau_{\Gamma/2}^- < n < \tau_{\Gamma/2}^+} e^{-2S^{(\gG)}_n} \right)
	\right]
	\qquad \text{and} \qquad 
	\E
	\left[
	\log
	\left(\sum_{u_{-1} \le n \le u^+_{+1}} e^{-2S^{(\gG)}_n} \right)
	\right]
	\end{equation}
	converge towards 
	\begin{equation}
	\E
	\left[\log
	\left(\sum_{n\in \Z} e^{-2Z_n} \right)
	\right] \, .
	\end{equation}
\end{lemma}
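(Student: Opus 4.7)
The plan is to show both expectations in \eqref{eq:lem_Elog_fine_expectations} converge to $\E[\log(\sum_{n\in\Z} e^{-2Z_n})]$ by identifying the common limit via the convergence $S^{(\gG)}\to Z$ from Section~\ref{sec:around_extremum}, and then comparing the two expectations to each other.

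\textbf{Convergence of the first (core) expectation.} Let $T^+_{\gG/2}=\inf\{n\ge 0:Z_n\ge\gG/2\}$ and $T^-_{\gG/2}=\sup\{n\le 0:Z_n\ge\gG/2\}$ denote the analogues of $\tau^\pm_{\gG/2}$ for the process $Z$. Since $Z_n\to+\infty$ as $|n|\to\infty$ almost surely, $T^\pm_{\gG/2}\to\pm\infty$ and hence $\sum_{T^-_{\gG/2}<n<T^+_{\gG/2}}e^{-2Z_n}\to\sum_{n\in\Z}e^{-2Z_n}$ almost surely as $\gG\to\infty$. Combined with the finite-dimensional convergence $S^{(\gG)}\to Z$ (on the positive side from the Doob $h$-transform description of \cite{Bertoin1994}; on the negative side from Remark~\ref{rem:rotation_invariance}), this would give convergence in distribution of the core sum to $\sum_{n\in\Z}e^{-2Z_n}$. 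To pass from distributional convergence to convergence of expectations I would establish uniform integrability of $\log(\text{core sum})$: the lower bound $\ge 0$ follows from the $n=0$ term, and the upper bound $\log(\text{core sum})\le\log(u_{+1}^+-u_{-1})$ together with uniform moment bounds on $(u_{+1}^+-u_{-1})/\gG^2$ — in the spirit of \eqref{eq:bound_unif_HgG} — provides an integrable majorant.

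\textbf{Reducing the full sum to the core.} Writing the full sum as $A+B$ with $A$ the core sum (which satisfies $A\ge 1$ thanks to the $n=0$ term) and $B$ the outer contribution, we have $0\le\log(A+B)-\log A=\log(1+B/A)\le B$. The outer indices $n$ with $S^{(\gG)}_n\ge\gG/2$ contribute to $B$ at most $(u_{+1}^+-u_{-1})e^{-\gG}$, whose expectation is $O(\gG^2 e^{-\gG})\to 0$ by Corollary~\ref{th:cor_Donsker}. The outer indices with $S^{(\gG)}_n<\gG/2$ correspond to transient dips of the walk below $\gG/2$ before the last crossing on the left side (or after the first crossing on the right side); I would control their contribution via a tail estimate showing that deep dips away from the canonical $\gG$-minimum are atypical and contribute $o(1)$ in expectation.

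\textbf{Main obstacle.} The hardest step is the tail control in the previous paragraph — showing $\E\left[\sum_{n\in\text{outer},\,S^{(\gG)}_n<\gG/2}e^{-2S^{(\gG)}_n}\right]\to 0$ as $\gG\to\infty$. This requires quantitative estimates on the walk inside a $\gG$-stretch at positions away from its minimum, going beyond the first-order estimates of Corollary~\ref{th:cor_Donsker}. A related subtlety is strengthening the finite-dimensional convergence $S^{(\gG)}\to Z$ enough to control $\log$ of the sum uniformly, which would also yield the finiteness of the limiting expectation $\E[\log\sum_{n\in\Z}e^{-2Z_n}]$.
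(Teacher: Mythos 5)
Your high-level plan --- establish distributional convergence of the (core and full) sums to $\sum_{n\in\Z}e^{-2Z_n}$, then upgrade to convergence of expectations via uniform integrability of the logarithms --- matches the paper's. However, two genuine gaps remain, one of which you flag and one of which you do not.

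The gap you do not flag: your proposed uniform-integrability majorant is too crude to work. Since $S^{(\gG)}_n\ge 0$ on the core window, the bound $\log(\text{core sum})\le\log(u_{+1}^+-u_{-1})$ is correct, but $\E\bigl[\log(u_{+1}^+-u_{-1})\bigr]\sim\log\gG^2\to\infty$, so the family $\{\log(u_{+1}^+-u_{-1})\}_{\gG\ge 1}$ is \emph{not} uniformly integrable (a UI family must be bounded in $\bL^1$), and domination by a non-UI family gives nothing. The paper's UI argument is sharper: it first establishes the uniform bound
\begin{equation*}
\sup_{\gG\ge 1}\ \E\left[\sum_{u_{-1}\le n\le u_{+1}^+} e^{-2S^{(\gG)}_n}\right]<\infty\,,
\end{equation*}
and then, because the sum is $\ge 1$ and $\log u=o(u)$ as $u\to\infty$, the family of logarithms is automatically UI.

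The gap you do flag --- controlling $\E\bigl[\sum_{n\ \text{outer},\ S^{(\gG)}_n<\gG/2}e^{-2S^{(\gG)}_n}\bigr]$ and, more generally, the tails of the infinite sums --- is indeed the crux, and it is exactly what Lemma \ref{th:lem_tech_sums} supplies: $\E_x\bigl[\sum_{n\ge 0}e^{-2Z_n}\bigr]\le C/(x+1)$, together with the uniform-in-$\gG$ analogue for the walk $S$ conditioned on $\{\tau_{[\gG,\infty)}<\tau_{(-\infty,0)}\}$. These bounds are obtained from gambler's-ruin estimates plus a control on the expected occupation time of bounded intervals by the conditioned walk, and then an excursion-theoretic argument handles the piece of $S^{(\gG)}$ after its first hitting time of level $\gG$. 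They simultaneously provide the vanishing-tail truncation needed to deduce distributional convergence from finite-dimensional convergence of $S^{(\gG)}$ to $Z$, and the uniform moment bound above. Without these quantitative estimates neither of your two steps closes; with them, both do, and the argument becomes essentially the paper's.
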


Before proving Lemma \ref{th:lem_Elog_fine}, let us state and prove the following lemma. For every $x$, under a probability measure denoted  $\p_x$, we consider the process $Z$ and the random walk $S$ to be started at $x$. In particular, $\p_0=\p$. If $A$ is a subset of $\R$, we denote 
\begin{equation}
	\tau_A=\inf\{n\ge 0: S_n\in A\}\, .
\end{equation}

\begin{lemma}\label{th:lem_tech_sums}
	Assume \eqref{eq:assump_mu_initial}. Then, there exists a constant $C$ such that, for every $x\ge 0$,  
	\begin{equation} \label{eq:lem_tech_sumS}
	\sup_{\gG\ge 1} \E_x\left[\sum_{n=0}^{ \tau_{[\gG, \infty)}-1} e^{-2S_n} \Big| \tau_{[\gG, \infty)}< \tau_{(-\infty, 0)}\right] \le \frac{C}{x+1} \, ,
\end{equation} 
and
	\begin{equation} \label{eq:lem_tech_sumZ}
	\E_x\left[\sum_{ n \ge 0} e^{-2Z_n}\right] \le \frac{C}{x+1} \, .
\end{equation}
	
\end{lemma}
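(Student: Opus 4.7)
My strategy is to prove \eqref{eq:lem_tech_sumZ} first via Doob's h-transform relating $Z$ to the killed walk $S$, and then derive \eqref{eq:lem_tech_sumS} from the same device applied to a different harmonic function. Let $V$ denote the renewal function of the weak descending ladder heights of $S$; under \eqref{eq:assump_mu_initial}, classical renewal theory yields $V(y)\asymp y+1$ on $[0,\infty)$. By construction (as in \cite{Bertoin1994}), $Z$ is the Doob h-transform of $S$ killed upon entering $(-\infty,0)$ with harmonic function $V$, whence
\begin{equation*}
\E_x\!\left[\sum_{n\ge 0} e^{-2Z_n}\right] \;=\; \frac{1}{V(x)}\,\E_x\!\left[\sum_{n=0}^{\tau_{(-\infty,0)}-1} V(S_n)\, e^{-2S_n}\right].
\end{equation*}
Using $V(y)\le C_1(y+1)$, the elementary bound $(y+1)e^{-y}\le 1$ on $[0,\infty)$, and $V(x)\ge c(x+1)$, the estimate \eqref{eq:lem_tech_sumZ} reduces to the uniform-in-$x$ bound
\begin{equation*}
\Psi(x) := \E_x\!\left[\sum_{n=0}^{\tau_{(-\infty,0)}-1} e^{-S_n}\right] \;\le\; C' \qquad \text{for all } x\ge 0.
\end{equation*}

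To establish this uniform bound, I rewrite $\Psi(x)=\int_{[0,\infty)} e^{-y}\,G^{\mathrm{kill}}(x,\dd y)$, where $G^{\mathrm{kill}}(x,\cdot)$ denotes the Green kernel of $S$ killed upon entering $(-\infty,0)$, and apply the classical potential-theoretic estimate $G^{\mathrm{kill}}(x,\dd y)\le C\,V(x\wedge y)\,\dd y$ (or its lattice analogue) available under finite variance. The exponential weight then integrates the linear factor $V(x\wedge y)\le C(y+1)$ to a finite constant independent of $x$, giving the required uniform bound. An alternative route is a direct excursion decomposition over dyadic levels combined with the renewal theorem on each strip, but the potential-theoretic estimate is more compact.

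For \eqref{eq:lem_tech_sumS}, the law of $S$ under $\p_x$ conditionally on $\{\tau_{[\gG,\infty)}<\tau_{(-\infty,0)}\}$ is itself a Doob h-transform of the killed walk, with harmonic function $h_\gG(y):=\p_y[\tau_{[\gG,\infty)}<\tau_{(-\infty,0)}]$. Classical fluctuation theory gives $h_\gG(y)\le C_4\,V(y)/V(\gG)$ on $[0,\gG)$ and $h_\gG(x)\ge c_4\,V(x)/V(\gG)$ at least for $x\le \gG/2$ (the complementary regime $x>\gG/2$ is trivial because $e^{-2x}$ is already exponentially small). Substituting these into the h-transform identity, the factors of $V(\gG)$ cancel and the conditional expectation is bounded above by a constant multiple of
\begin{equation*}
\frac{1}{V(x)}\,\E_x\!\left[\sum_{n=0}^{\tau_{(-\infty,0)}-1} V(S_n)\, e^{-2S_n}\right],
\end{equation*}
which was already estimated in the first step, uniformly in $\gG\ge 1$. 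This yields \eqref{eq:lem_tech_sumS} with the same rate $C/(x+1)$.

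The main obstacle is the uniform bound on $\Psi$: since the unkilled walk $S$ is recurrent under \eqref{eq:assump_mu_initial}, $G^{\mathrm{kill}}(x,\cdot)$ has infinite total mass, so finiteness of $\Psi$ relies on both the exponential damping in $y$ and on precise renewal-theoretic control of the growth of $G^{\mathrm{kill}}$ near the boundary of $[0,\infty)$ — this is exactly where the finite second moment assumption enters in a nontrivial way.
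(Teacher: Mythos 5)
Your proposal takes a genuinely different route from the paper. The paper works directly with the conditioned walk and establishes, by hand, a sequence of elementary estimates culminating in the time-in-$[0,y]$ bound $\E_x[\sum_{n<\tau_{[\gG,\infty)}}\ind_{S_n\in[0,y]}\mid B_\gG]\le c_6\,y^3/(x+1)$, from which \eqref{eq:lem_tech_sumS} follows by slicing $[0,\infty)$ into unit intervals; \eqref{eq:lem_tech_sumZ} is then deduced a posteriori by a limiting argument ($\gG\to\infty$) that requires a careful Portmanteau/monotone-convergence step because the convergence of $S$ under $\p_x[\cdot\mid B_\gG]$ towards $Z$ is only in finite-dimensional distributions. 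You instead reduce both inequalities to a single uniform bound on $\Psi(x)=\E_x[\sum_{n<\tau_{(-\infty,0)}}e^{-S_n}]$ by going through the Doob $h$-transform with the renewal function $V$ (for $Z$) and the harmonic function $h_\gG$ (for the conditioned walk). This is conceptually cleaner: the two cases become two choices of $h$-function over the same killed process, the $V(\gG)$ factors cancel, and the annoying $\gG\to\infty$ limit is sidestepped entirely. What you pay for this is the key potential-theoretic input, the Green-measure estimate $G^{\mathrm{kill}}(x,\dd y)\le C\,V(x\wedge y)\,\dd y$. This is true in spirit (it is the discrete analogue of the Brownian $g(x,y)=2(x\wedge y)/\sigma^2$), but calling it ``classical'' and available as a black box for arbitrary centered laws with finite second moment — not just lattice or absolutely continuous, but also mixtures with atoms where the Green measure need not have a Lebesgue density — is optimistic. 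The clean way to state it, so that it covers all cases, is as a bound on intervals, e.g.\ $G^{\mathrm{kill}}(x,[a,b])\le C(x\wedge b+1)(b-a+1)$, and even then its proof needs a local-CLT or Stone-type argument. The paper's gambler's-ruin approach (Propositions 5.1.4--5.1.5 of Lawler--Limic) is coarser (it gives $y^3$ rather than $y^2$ inside the exponential sum, which is still plenty) but is entirely self-contained and robust to the generality of $\mu$; your approach would need either a proof of the Green-measure bound or a precise citation.

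One local slip: the claim that ``the complementary regime $x>\gG/2$ is trivial because $e^{-2x}$ is already exponentially small'' does not hold up as stated, since the sum $\sum_{n<\tau_{[\gG,\infty)}}e^{-2S_n}$ picks up contributions from excursions of the conditioned walk down to small levels even when $S_0=x$ is large, and these are not exponentially negligible. This is harmless, though, because the case split is unnecessary: the two-sided gambler's-ruin estimate $h_\gG(y)\asymp V(y)/V(\gG)$ (equivalently, $h_\gG(y)\asymp(y+1)/\gG$, exactly \eqref{eq:Gambler_prob} in the paper) holds uniformly over $y\in[0,\gG)$, and for $x\ge\gG$ the sum is empty. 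You should drop the caveat and apply the $h$-transform argument uniformly on $[0,\gG)$.
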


\begin{proof}
	To begin with, let us handle the statement in the lemma concerning the walk $S$ conditioned on  the event $\{ \tau_{[\gG, \infty)}< \tau_{(-\infty, 0)} \}$. This event will be denoted $B_\gG$ throughout this proof. 
	
	We recall two useful ``gambler's ruin'' estimates. There exist positive constants $c_1, c_2$ and $c_3$ such that for every $0 \le x < r$ we have that
\begin{equation}\label{eq:Gambler_prob}
	c_1 \frac{x+1}{r} \le \p_x[\tau_{[r, \infty)}< \tau_{(-\infty, 0)}] \le c_2 \frac{x+1}{r}
\end{equation}
and:
\begin{equation}\label{eq:Gambler_time}
	\p_x[\tau_{[r, \infty)} \wedge \tau_{(-\infty, 0)} \ge r^2 ]\le c_3\frac{x+1}{r}\, .
\end{equation}

For a reference, see Propositions 5.1.4  and 5.1.5 in \cite{Lawler2010}, where these results are stated with $\tau_{(-\infty, 0)}$ replaced by $ \tau_{(-\infty, 0]}$: our statements follow by simple arguments. 

\smallskip

We start by establishing the existence of a positive constant $c_4$ such that for every $x\ge 0$, for every $\gG \ge 1$ and $y\ge 1$,
\begin{equation}\label{eq:bound_prob_hit_y}
	\p_x[\tau_{[0, y]} < \tau_{[\gG, \infty)}| B_\gG] \le c_4 \frac{y}{x+1}\, .
\end{equation}
Note that this statement is immediate if $x\ge \gG$ (then the probability is 0) or if $x\le y$ (then the probability is 1 but any $c_4\ge 2$ is sufficient). 
Assuming $1\le y< x < \gG$, we use the strong Markov property and \eqref{eq:Gambler_prob} to get:
\begin{equation}
	\begin{aligned}
		\p_x[\tau_{[0, y]} < \tau_{[\gG, \infty)}| B_\gG] 
		& = \frac{\p_x[\tau_{[0, y]} < \tau_{[\gG, \infty)} < \tau_{(-\infty, 0)}]}{\p_x[\tau_{[\gG, \infty)} < \tau_{(-\infty, 0)}]}\\
		& \le \frac{\sup_{z\in[0, y]} \p_z[\tau_{[\gG, \infty)} < \tau_{(-\infty, 0)}]}{\p_x[\tau_{[\gG, \infty)} < \tau_{(-\infty, 0)}]}\\
		& \le \frac{c_2 \frac{y+1}{\gG}}{c_1 \frac{x+1}{\gG}} \le \frac{2c_2}{c_1} \frac{y}{x+1}\, .
	\end{aligned}
\end{equation}

\smallskip 

We are further going to establish a control on the expected time spent below $y$ by the walk conditioned on $B_\gG$. Let us first show that there exists $c_5>0$ such that for every $1\le y\le \gG$ and every $z\in [0, y)$,
\begin{equation}\label{eq:bound_time_tau_y}
	\E_z[\tau_{[y, \infty)}| B_\gG] \le c_5 y^2 \, .
\end{equation}
To prove this we observe, for any $t \ge 0$, that if $B_\gG$ holds and $\tau_{[y, \infty)} > t $, then $S$ remains inside $[0, y)$ at least up to time $\lfloor t \rfloor$ — in particular it is inside  $[0, y)$ at that time — and after that time it hits $[\gG, \infty)$ before hitting $(-\infty, 0)$, hence:
\begin{equation}
	\begin{aligned}
	\p_z[\tau_{[y, \infty)} > t| B_\gG] 
	& \le  
	\frac{
		\p_z[ \tau_{[y, \infty)} \wedge \tau_{(-\infty, 0)}  > t ] 
		\sup_{z'\in[0, y)} \p_{z'}[\tau_{[\gG, \infty)} < \tau_{(-\infty, 0)}]}
	{\p_z[\tau_{[\gG, \infty)} < \tau_{(-\infty, 0)}] }\\
	& \le 
	\frac{\p_z[  \tau_{[y, \infty)} \wedge \tau_{(-\infty, 0)}  > t ]  
	c_2 \frac{y+1}{\gG}}
	{c_1 \frac{z+1}{\gG}} \\
	& \le \p_z[ \tau_{[y, \infty)} \wedge \tau_{(-\infty, 0)} > t ]
	\frac{2c_2}{c_1} \frac{y}{z+1}  \\
	\end{aligned}
\end{equation}
Let us now pick $c_5'$ such that $c_5'\ge 1$ and $  \frac{c_3}{\sqrt{c_5'}}  \frac{2c_2}{c_1}\le \frac{1}{2}$.
Then we have, using \eqref{eq:Gambler_time},
\begin{equation}
	\begin{aligned}
	\p_z[ \tau_{[y, \infty)} \wedge \tau_{(-\infty, 0)} > c_5' y^2 ] 
	& \le \p_z[  \tau_{[\sqrt{c_5'} y, \infty)} \wedge \tau_{(-\infty, 0)} \ge c_6' y^2 ] \\
	& \le c_3 \frac{z+1}{\sqrt{c_5'} y}\, .
	\end{aligned}
\end{equation}
So, for every $1\le y \le \gG$, we have:
\begin{equation}
	\sup_{z\in [0, y)} \p_z[\tau_{[y, \infty)} > c_5' y^2| B_\gG] \le \frac{1}{2} \, ,
\end{equation}
and by an induction argument we deduce that for every $1\le y \le \gG$, for every $n\ge1$, 
\begin{equation}
	\sup_{z\in [0, y)} \p_z[\tau_{[y, \infty)} > n \lfloor c_5' y^2 \rfloor |  B_\gG] \le \frac{1}{2^n}
\end{equation}
and \eqref{eq:bound_time_tau_y} follows.
As annouced we derive from \eqref{eq:bound_prob_hit_y} and \eqref{eq:bound_time_tau_y} a control on the time spent in $[0, y)$ up to time $\tau_{[\gG, \infty)}$ by the walk conditioned on $B_\gG$. There exists $c_6>0$ such that for every $x\ge 0$, for every $\gG\ge 1$ and $y \ge 1$, 
\begin{equation}\label{eq:bound_expected_time_in_0y}
	\E_x\left[\sum_{n=0}^{\tau_{[\gG, \infty)}-1 } \ind_{S_n \in [0, y]} |  B_\gG\right]  \le c_6 \frac{y^3}{x+1} \, .
\end{equation}
Let us prove this. Without loss of generality, we may assume that $x<\gG$, otherwise the sum is void. If $y\ge \frac{\gG}{2c_4}$ then we bound the sum by $\tau_{[\gG, \infty)}$ and use \eqref{eq:bound_time_tau_y} with $y=\gG$.
Otherwise we reason as follows. Starting from $x$, the conditioned walk has a probability bounded by $c_4\frac{y}{x+1}$ of hitting $[0, y]$; if this happens, then, in virtue of \eqref{eq:bound_time_tau_y}, the conditioned walk remains in $[0, 2c_4 y)$ for an expected time bounded by a constant times $y^2$; after leaving this segment the conditioned walk has probability bounded by $c_4\frac{ y}{2c_4 y+1} \le \frac{1}{2}$ of returning to $[0, y]$ and so on; the number of returns to $[0, y]$ is stochastically dominated by a geometric variable with parameter 2.
The bound \eqref{eq:bound_expected_time_in_0y} follows.

\medskip

Finally, we establish \eqref{eq:lem_tech_sumS} by writing:
\begin{equation}
	\begin{aligned}
	\E_x  \left[\sum_{0 \le n \le \tau_{[\gG, \infty)}-1} e^{-2S_n} | B_\gG\right] 
	& \le \sum_{p=1}^{\infty} e^{-2(p-1)}  \E_x\left[\sum_{0 \le n \le \tau_{[\gG, \infty)}-1} \ind_{S_n \in [p-1, p]}  | B_\gG \right] \\
	& \le \sum_{p=1}^{\infty} e^{-2(p-1)}  \E_x\left[\sum_{0 \le n \le \tau_{[\gG, \infty)}-1} \ind_{S_n \in [0, p]}  | B_\gG \right] \\
	& \le \frac{c_6}{x+1}\sum_{p=1}^{\infty} e^{-2(p-1)} p^3  \, ,
	\end{aligned}
\end{equation}
where we used \eqref{eq:bound_expected_time_in_0y} for the last inequality.

\bigskip

Let us turn to the statement in the lemma concerning the limiting process $(Z_n)_{n\ge 0}$. We are going to derive it from \eqref{eq:lem_tech_sumS} but we need to be careful since the convergence of $S$ under $\p_x[ \cdot | B_\gG]$ towards $Z$ is only in the sense of finite-dimensional distributions. Let us fix $n\ge 0$, and use the fact that the law of $(S_0, \dots, S_n)$ under $\p_x[\cdot | B_\gG]$ converges weakly towards the law of $(Z_1, \dots, Z_n)$ under $\p_x$. We claim that 
\begin{equation}\label{eq:prob_n<tau_to_1}
	\p_x[ n <\tau_{[\gG, \infty)} |B_\gG]\underset{\gG\to\infty}{\longrightarrow} 1\, .
\end{equation}
Indeed for any $\gamma>0$, as soon as $\gG \ge \gamma$, 
\begin{equation}
	\p_x[ n< \tau_{[\gG, \infty)}  |B_\gG] \ge  \p_x[\tau_{[\gamma, \infty)} > n |B_\gG] = \p_x[(S_0, \dots, S_n) \in (-\infty, \gamma)^{m+1}  |B_\gG] 
\end{equation}
and using the Portemanteau theorem, the infimum of the latter, as $\gG\to\infty$, is not smaller than $\p_x[(Z_0, \dots, Z_n) \in (-\infty, \gamma)^{m+1}] $, finally this probability converges to 1 as $\gamma \to\infty$. Since the function $x \mapsto e^{-2x}$ is continuous and bounded on $[0, \infty)$, we deduce that
\begin{equation}
	\E_x  \left[  e^{-2S_n}  \ind_{n< \tau_{[\gG, \infty)}} |B_\gG \right] \underset{\gG\to\infty}{\longrightarrow} \E_x  \left[  e^{-2Z_n} \right]
\end{equation}
From this, together with \eqref{eq:lem_tech_sumS}, we deduce that for each $m\ge0$,
\begin{equation}
	\begin{aligned}
	\E_x  \left[ \sum_{0\le n\le m}  e^{-2Z_n} \right] 
	& = \lim_{\gG\to\infty} \sum_{0\le n \le m } \E_x  \left[  e^{-2S_n}  \ind_{n< \tau_{[\gG, \infty)}} |B_\gG \right] \\
	& \le \lim_{\gG\to\infty} \sum_{n \ge 0} \E_x  \left[  e^{-2S_n}  \ind_{n< \tau_{[\gG, \infty)}} |B_\gG \right] \le \frac{C}{x+1}\, .
	\end{aligned}
\end{equation}
Using the monotone convergence theorem, we take $m\to\infty$ and deduce \eqref{eq:lem_tech_sumZ}.
\end{proof}

With this lemma at hand, we prove Lemma \ref{th:lem_Elog_fine}.

\begin{proof}[Proof of Lemma \ref{th:lem_Elog_fine}]
Our first goal is to prove that the variables 
\begin{equation}\label{eq:desired_conv_full}
	\sum_{\tau_{\Gamma/2}^- < n < \tau_{\Gamma/2}^+} e^{-2S^{(\gG)}_n} 
	\qquad \text{and} \qquad 
	\sum_{u_{-1} \le n \le u^+_{+1}} e^{-2S^{(\gG)}_n} 
\end{equation}
converge towards the variable $\sum_{n\in\Z}  e^{-2Z_n}$.
In fact, we are going to focus on proving the convergence of the variables  
\begin{equation}\label{eq:desired_conv_right_piece}
	\sum_{0 \le  n < \tau_{\Gamma/2}^+} e^{-2S^{(\gG)}_n} 
	\qquad \text{and} \qquad 
	\sum_{0 \le n \le u^+_{+1}} e^{-2S^{(\gG)}_n} 
\end{equation}
towards the variable $\sum_{n\ge 0}  e^{-2Z_n}$, since the rest of the sums can be treated similarly.

Let us observe that the trajectory $(S^{(\gG)}_n)_{0 \le n \le u^+_{+1}} $ can be decomposed into two pieces:
\begin{itemize}
\item from time 0 to time $\tau_\gG^+= \inf \{n \ge 0: S^{(\gG)}_n \ge \gG\}$, the trajectory is distributed as $S$ started from 0 and conditioned on hitting $[\gG, \infty)$ before hitting $(-\infty, 0)$, and stopped at the time it hits $[\gG, \infty)$. 
\item from time $\tau_\gG^+$ to time $u_{+1}^+$, the trajectory is distributed as $S$ stopped at the beginning of the first downward excursion with depth $\gG$ or larger.
\end{itemize}
By \cite{Bertoin1994}, we have that for each $m\ge 1$, $(S_0^{(\gG)}, \dots, S^{(\gG)}_{m-1})$ converges in distribution towards $(Z_0, \dots, Z_{m-1})$. 
Reasoning as for \eqref{eq:prob_n<tau_to_1}, we also note that 
\begin{equation}
\p[\tau_{\gG/2}^+ >n]
= \p[\tau_{[\gG/2, \infty)} >n | \tau_{[\gG, \infty)}< \tau_{(-\infty, 0)}] \underset{\gG\to\infty}{\longrightarrow} 1\, .
\end{equation}
To deduce the convergences we are after, it is enough to show a bound on 
$\E\left[\sum_{n \ge m} e^{-2Z_n}\right]$ 
which vanishes as $m\to\infty$ and a bound on 
$\E\left[\sum_{m\le n \le u_{+1}} e^{-2S_n^{(\gG)}}\right]$ 
which is uniform in $\gG$ and vanishes as $m\to\infty$. 
Furthermore, since $S_m^{(\gG)}$ converges in distribution towards $Z_m$ and since $Z_m\to\infty$ as $m\to\infty$, it is enough to show a bound on 
$\E_x\left[\sum_{n \ge 0} e^{-2Z_n}\right]$ 
which vanishes as $x\to\infty$ and a bound on 
$\E_x\left[\sum_{0\le n \le u_{+1}} e^{-2S_n^{(\gG)}}\right]$ which is uniform in $\gG\ge 1$ and vanishes as $x\to\infty$, where, under $\p_x$, we consider $S^{(\gG)}$ to be as describe above, but started from $x$.

The bound on 
$\E_x\left[\sum_{n \ge 0} e^{-2Z_n}\right]$ 
is provided by Lemma \ref{th:lem_tech_sums}, which also provides a bound for the first piece of $S^{(\gG)}$ (that is, up to time $\tau_\gG^+$). For the second piece we use some excursion theory. We use the formalism introduced in the proof of Fact \ref{th:fact_indep_portions}, so the weakly ascending record times:
\begin{equation}
\alpha_0:=0 \, , \quad 
\text{and, for } k\ge 1, \quad 
\alpha_k:=\min\{n\ge \alpha_{k-1}: S_n \ge S_{\alpha_{k-1}}\} \, , 
\end{equation} 
and further set:
\begin{itemize}
	\item $\Delta_k=S_{\alpha_k}-S_{\alpha_{k-1}}$, the overshoot of the $k$-th excursion; 
	\item $D_k=\max_{\alpha_{k-1} \le n < \alpha_k } S_{\alpha_{k-1}} - S_n$, the depth of the $k$-th excursion;
	\item $\eta_k=\sum_{n=\alpha_{k-1}}^{\alpha_k - 1} e^{-2(S_n-S_{\alpha_{k-1}})}$, the contribution of the $k$-th excursion;
\end{itemize}
so that, remarking that  $S^{(\gG)}$ is at $\gG$ or above at time $\tau_{\gG}^+$,
\begin{equation}
\E_x\left[\sum_{\tau_{\gG}^+ \le n \le u_{+1}^+} e^{-2S_n^{(\gG)}}\right] \le e^{-2\gG} \E\left[\sum_{k\ge 1} e^{-2\sum_{l=1}^{k-1} \Delta_k } \eta_k \ind_{\forall l=1, \dots, k, D_l < \gG}\right]
\end{equation}
(we recall that under $\p$, $S$ is started at 0).
Now, let us simply bound the indicator function $\ind_{\forall l=1, \dots, k, D_l < \gG}$ by $\ind_{D_k < \gG}$, and use the independence of the excursions:
\begin{equation}
	\begin{aligned}
	\E_x\left[\sum_{\tau_{\gG}^+ \le n \le u_{+1}^+} e^{-2S_n^{(\gG)}}\right] 
	& \le e^{-2\gG} \E\left[\sum_{k\ge 1} e^{-2\sum_{l=1}^{k-1} \Delta_k } \eta_k \ind_{ D_k < \gG}\right]\\
	& \le e^{-2\gG} \sum_{k\ge 1} \E\left[e^{-2 \Delta_1 }\right]^{k-1} \E[\eta_1 \ind_{ D_1 < \gG}]\\
	& = e^{-2\gG} \frac{1}{1-\E\left[e^{-2 \Delta_1 }\right]} \E[\eta_1 \ind_{ D_1 < \gG}]\, .
	\end{aligned}
\end{equation}
For $A\subset \R$, let us denote $\tau^+_A := \inf\{n\ge 1 : S_n \in A\}$. We observe that:
\begin{equation}
	\begin{aligned}
	e^{-2\gG}  \E[\eta_1 \ind_{ D_1 < \gG}] 
	&  \le  e^{-2\gG}  \E[\eta_1 | D_1 < \gG]\\
	& = e^{-2\gG} \E\left[\sum_{0\le n \le \tau^+_{[0, \infty)}-1} e^{-2S_n} 
	|  \tau^+_{[0, \infty)} < \tau_{(-\infty, \gG]}\right] \\
	&  = \E_{\gG}\left[\sum_{0\le n \le \tau^+_{[\gG, \infty)}-1} e^{-2S_n}  
	|  \tau^+_{[\gG, \infty)} < \tau_{(-\infty, 0]} \right] \, .
	\end{aligned}
\end{equation} 
This quantity is very similar to the quantity in Lemma \ref{th:lem_tech_sums} when taking $x=\gG$ there, except that  $\tau_{[\gG, \infty)}$  is replaced by $\tau^+_{[\gG, \infty)}$. In fact, loosely speaking, sending $x\to\gG$ from below in \eqref{eq:lem_tech_sumS} should yield that:
\begin{equation}
	\E_{\gG}\left[\sum_{0\le n \le \tau^+_{[\gG, \infty)}-1} e^{-2S_n}  
	|  \tau^+_{[\gG, \infty)} < \tau_{(-\infty, 0]} \right] \le \frac{C}{\gG+1}\, .
\end{equation} 
Actually, this is not legitimate, because we do not control continuity of the walk conditioned on $B_\gG$ in the initial point, but one can go through the proof of Lemma \ref{th:lem_tech_sums} and adapt it to prove this claim. 

\bigskip

We thus derive the convergences stated in \eqref{eq:desired_conv_right_piece}. We proceed similarly for the piece of trajectory on the left of 0 and deduce the convergences stated in \eqref{eq:desired_conv_full}.

\medskip

While showing these convergences, we have also shown that the variables in \eqref{eq:desired_conv_full} have uniformly bounded expectations, for $\gG\ge 1$; using the fact that $\log(u)=o(u)$ as $u\to\infty$, we derive that the variables in Lemma \ref{th:lem_Elog_fine} are uniformly integrable and this concludes the proof of the lemma.
\end{proof}

\section{Proof of Theorem \ref{th:dev_calF}} \label{sec:proof_main_thms}

\subsection{Heuristic}

We give here a high-level overview of the main ideas behind the proof of Theorem~\ref{th:dev_calF}.

\medskip

For the \emph{lower bound}, we focus on configurations that deviate slightly from the maximal energy configuration described in Fact~\ref{th:fact_maximal_config}. Specifically, we allow for \emph{small displacements} of the domain walls in that configuration. These local modifications yield a quantifiable energy cost, which results in the additive correction term \(\frac{\widetilde{\kappa}}{(2J)^2}\) appearing in the lower bound of Theorem~\ref{th:dev_calF}.


\medskip

For the \emph{upper bound}, we base our analysis on the process of \((\gG - 5 \log \gG)\)-extrema of the walk \(S\). Let us denote by \(\sigma^{(\gG - 5 \log \gG)}\) the configuration prescribed (in the sense of Fact~\ref{th:fact_maximal_config}) by these \((\gG - 5 \log \gG)\)-extrema. The proof proceeds in several steps:


\begin{itemize}

\item {\bf Step 1.} In the same spirit as in the proof of Fact~\ref{th:fact_maximal_config}, we show that any configuration \(\sigma\) that introduces ``too many'' domain walls compared to \(\sigma^{(\gG - 5 \log \gG)}\) is suboptimal. Moreover, the gap between \(\gG - 5 \log \gG\) and \(\gG\) allows for a quantitative estimate of the energy penalty incurred by such excess walls. This justifies restricting our attention to configurations obtained from \(\sigma^{(\gG - 5 \log \gG)}\) by performing \emph{small displacements} of domain walls, or by removing some of them.


\item {\bf Step 2.} We then quantify the energy cost of these small displacements. After accounting for this contribution, we can further restrict to configurations in which spin changes occur only at the positions of the \((\gG - 5 \log \gG)\)-extrema.


\item {\bf Step 3.} We observe that any \((\gG - 5 \log \gG)\)-stretch with height exceeding \(\gG + 5 \log \gG\) must contain a corresponding spin domain in any nearly-optimal configuration. Once again, we can provide a quantitative estimate for the energy gain from including such domains. This leaves only a small number of remaining possibilities — namely, the configuration remains free on the stretches whose height does not exceed \(\gG + 5 \log \gG\), which form a small fraction of the total. For each such residual configuration, we bound its energy from above by the maximal energy.

\end{itemize}
Finally, we combine the estimates obtained in the previous steps, and invoke the LLN together with our controls from Fact~\ref{th:fact_Donsker}, Corollary~\ref{th:cor_Donsker} and Lemma~\ref{th:lem_Elog_fine}. In particular, the additive term \(\frac{\widetilde{\kappa}}{(2J)^2}\) in the upper bound arises from small displacements of domain walls around the \((\gG - 5 \log \gG)\)-extrema, in the same way as in the lower bound.

\subsection{Lower bound}

In this section, for convenience we are going to drop the dependence in $\gG$ in the notations, so we write $t_k$, $u_k$ and $u_k^+$ instead of $t_k(\gG)$, $u_k(\gG)$ and $u_k^+(\gG)$. 
We work with some large integer $K$ and consider the system of length $N=N_K=t_{2K}$. For every $k=2, \dots, 2K$, we define two times $\tau_k^-$ and $\tau_k^+$ around $u_k$, in the following manner. If $k$ is even, $u_k$ is a $\gG$-minimum and we denote
\begin{equation}
	\tau_k^-=\sup\{n<u_k: S_n\ge S_{u_k}+\gG/2\}\, , \qquad \tau_k^+=\inf\{n>u_k: S_n\ge S_{u_k}+\gG/2\}\, .
\end{equation}
If instead $k$ is odd, $u_k$ is a $\gG$-maximum and we denote
\begin{equation}
	\tau_k^-=\sup\{n<u_k: S_n\le S_{u_k}-\gG/2\}\, , \qquad \tau_k^+=\inf\{n>u_k: S_n\le S_{u_k}-\gG/2\}\, .
\end{equation}
We observe that 
\begin{equation}
	u_1 < \tau_2^-  +1 \, , \qquad \text{for every } k, \tau_k^+-1 < \tau_{k+1}^-+1\, , \text{ and } \tau_{2K}^+ \le  N.
\end{equation}
Now we consider the configurations $\sigma$ obtained by choosing a collection $(n_k)_{k=2, \dots, 2K}$ of times such that $\tau_k^- < n_k < \tau_k^+  $ for every $k$ and by setting $\sigma_0=0$ and flipping spins at position $u_1$ and at the positions $(n_k)_{k=2, \dots, 2K}$. Such a configuration has the energy:
\begin{equation}
	H_{N, J,h}^{++}(\sigma)=M_{N, J,h}^{++} - \sum_{k=2}^{2K} 2 (-1)^k (S_{n_k}-S_{u_k})
\end{equation}
Summing over these configurations yields:
\begin{equation}
Z_{N, J, h}^{++} \ge 
\exp(M_{N, J,h}^{++} ) 
\prod_{k=2}^{2K} 
\left(\sum_{\tau_k^- < n_k < \tau_k^+} \exp(-2 (-1)^k (S_{n_k}-S_{u_k})) \right)
\end{equation}
Taking the $\log$ and dividing by $N$:
\begin{equation}
\frac{1}{N} \log \left(Z_{N, J, h}^{++} \right) \ge 
\frac{M_{N, J,h}^{++}}{N} 
+\frac{2K}{N}\times \frac{1}{2K} \sum_{k=2}^{2K} 
\log \left(\sum_{\tau_k^- < n_k < \tau_k^+} \exp(-2 (-1)^k (S_{n_k}-S_{u_k})) \right)
\end{equation}
As $K\to\infty$, $N=t_{2K}\to\infty$ so, by Lemma \ref{th:def_free_ener}, the left-hand side of the inequality converges almost surely towards $\calF(J)$ and the first term on the right-hand side converges towards $\calM(J)$. Furthermore, using twice the LLN (together with Remarks \ref{rem:LLN} and \ref{rem:rotation_invariance}) the second addendum converges towards $\frac{2}{\E[L^\downarrow_\gG+L^\uparrow_\gG]}$ multiplied by 
\begin{equation}
\E\left[ \log \sum_{\tau_{\gG/2}^- < n < \tau_{\gG/2}^+} \exp\left(-2S^{(\gG)}_n\right)\right]\, ,
\end{equation} 
where we use the same notations as in Lemma \ref{th:lem_Elog_fine}.
We finally use Corollary \ref{th:cor_Donsker} and Lemma \ref{th:lem_Elog_fine} to derive the lower bound in Theorem \ref{th:dev_calF}.

\subsection{Upper bound}\label{sec:proof_upp_bound}

We are going to proceed in different steps. Instead of considering the process of $\gG$-extrema, we set $c_\gG=5\log \gG$ and consider the process of $(\gG-c_\gG)$-extrema.  Here again, for convenience we are going to drop the dependence in $\gG-c_\gG$ in the notations, so we write $t_k$, $u_k$ and $u_k^+$ instead of $t_k(\gG-c_\gG)$, $u_k(\gG-c_\gG)$ and $u_k^+(\gG-c_\gG)$. 
As before, we consider large integer $K$ and we are going to bound (this time from above) the partition function on the system of size $N=N_K=t_{2K}$, but we insist on the fact that now $t_{2K}$ is defined with $\gG-c_\gG$ instead of $\gG$.
For pratical purposes, we also set $u_0=0$ and $u_{2K+1}^+=t_{2K}$.


\subsubsection*{First step.}
By construction, inside an ascending (respectively, descending) $(\gG-c_\gG)$-stretch, there is no drop (respectively, rise) larger than $\gG-c_\gG$. 
Using this fact, we reduce the summation defining $Z_{N, J, h}^{++}$ to those configurations satisfying the following conditions:
\begin{enumerate}
\item For every $k=1, \dots, 2K$ there exists at most one wall located at a position between $u_{k-1}$ and $u_{k+1}^+$ (inclusive) having the same type as $u_k$ (i.e., a wall from +1 to -1 for $k$ odd, a wall from $-1$ to $+1$ for $k$ even). If such a wall exists, we refer to it as the wall attached to $u_k$.
\item For every $k=1, \dots, 2K-1$, provided they exist, the wall attached to $u_k$ is on the left of the wall attached to $u_{k+1}$.
\end{enumerate}
To justify this restriction, we make the following observation. If $\sigma$ is a configuration which does not follow those conditions, then it is always possible to remove a pair of walls in such a way that the energy increases by at least $2c_\gG$.
Indeed:
\begin{itemize}
\item Assume that the first condition is violated, say for some odd $k$ (so $u_k$ is a $(\gG-c_\gG)$-maximum); then the configuration $\sigma$ possesses a - domain entirely contained in $\lbra u_{k-1}, u_k\rbra$ or a + domain entirely  contained in $\lbra u_k, u_{k+1}\rbra$; in the first case (respectively, the second case) this domain corresponds to a drop (respectively a rise) in $S$ of size smaller than $\gG-c_\gG$ and removing this domain increases the energy by at least $2\gG- 2(\gG-c_\gG) = 2c_\gG$.
\item Assume that the first condition is satisfied but the second condition is violated, say for some odd $k$ (so $\lbra u_k, u_{k+1}\rbra$ is a descending $\gG$-stretch); then, similarly as before, the two walls correspond to a rise in $S$ of size smaller than $\gG-c_\gG$ and removing those two walls increases the energy by at least $2\gG- 2(\gG-c_\gG) = 2c_\gG$.
\end{itemize}
So, starting from some configuration $\sigma$ let us remove pairs of walls as described above until we obtain a configuration meeting the conditions and let us denote by $r$ the number of steps needed, so the number of removed pairs of walls. There may be a number of choices to be made along the procedure, let us assume we fix a deterministic rule. We claim that there are at most $\binom{N}{2r}$ configurations that lead in $r$ steps to a given configuration satisfying the conditions. 
Denoting by $C_{N, J, h}^{++}$ the sum defining $Z_{N, J, h}^{++}$ but restricted to configurations satisfying the conditions, it follows that:
\begin{eqnarray}
	Z_{N, J, h}^{++} \le C_{N, J, h}^{++} \times \sum_{r\ge 0} \binom{N}{2r} \exp(-2rc_\gG) = C_{N, J, h}^{++} \times (1+e^{-2c_\gG})^N.
\end{eqnarray}

\subsubsection*{Second step.}

We are going to relate $C_{N, J, h}^{++}$ to the partition function of an auxiliary RFIC.

Let us consider a configuration $\sigma$ satisfying the conditions. To $\sigma$ we associate the only configuration $\eta:\{2, \dots, 2K\}\to \{-1, +1\}$ such that, with respect to the boundary conditions $\eta_1=+1$ and $\eta_{2K+1}=+1$, for each $k\in\{1,\dots,  2K\}$, $\eta_{k}\neq \eta_{k+1}$ if and only if there is a wall attached to $u_{k}$ in configuration $\sigma$. In other terms, for each $k$, $\eta_{k}$ reflects the value taken by the configuration $\sigma$ on the $(\gG-c_\gG)$-stretch from $u_{k-1}$ to $u_{k}$, more precisely inbetween the (possible) walls attached to $u_{k-1}$ and $u_{k}$. 
Then, the energy of configuration $\sigma$ can be rewritten as:
\begin{equation}
	H_{N, J, h}^{++}(\sigma)
	= -\gG \sum_{k=1}^{2K} \ind_{\eta_k\neq \eta_{k+1}} 
	+ S_{u_1} + \sum_{k=2}^{2K} H_k\eta_k 
	+ (S_{t_{2k}}-S_{u_{2K}})
	+ \sum_{k} 2(-1)^k (S_{n_k} -S_{u_k})\, , 
\end{equation}
where the sequence $H=(H_k)_{k\ge 2}$ is defined by $H_k=S_{u_{k}}-S_{u_{k-1}}$. 
Conversely, we observe that the configurations that are associated to a given $\eta$ are given by the positions of the walls, so a collection of the following form: for each $k\in \{1, \dots, 2K\}$ such that $\eta_{k-1}\neq \eta_{k}$ a value $n_k$ between $u_{k-1}$ and $u_{k+1}^+$ (inclusive), with the further requirement that the sequence of $n_k$'s must be increasing. 

The quantity $C_{N, J, h}^{++}$ can thus be rewritten as:
\begin{equation}
	\begin{aligned}
	C_{N, J, h}^{++}
	= \sum_{\eta\in\{+1, -1\}^{2K}} 
	\Bigg( & 
		\exp\left(-\gG \sum_{k=1}^{2K} \ind_{\eta_k\neq \eta_{k+1}} 
	+\sum_{k=2}^{2K} H_k\eta_k + (S_{u_1}+S_{t_{2k}}-S_{u_{2K}})\right) \\
	& \sum_{(n_k)_k \text{ corresponding to } \eta} \exp\left( \sum_{k} 2(-1)^k (S_{t_k} -S_{u_k}) \right)
	\Bigg)
	\end{aligned}
\end{equation}
We now formally extend the sum over $(n_k)_k$ corresponding to $\eta$ to a sum over all collections $(n_k)_{k=1, \dots, 2K}$ such that $u_{k-1}\le n_k \le u_{k+1}^+$ for each $k$ to yield the following upper bound:
\begin{equation}
	\begin{aligned}
	C_{N, J, h}^{++}
	\le & \, Z_{1,2K, J, H}^{++} \exp( S_{u_1}+S_{t_{2k}}-S_{u_{2K}})\\
	& \times \prod_{k=1}^{2K} \left( 
		\sum_{u_{k-1} \le n_k \le  u_{k+1}^+} \exp\left(2(-1)^k (S_{n_k} -S_{u_k}) \right)
		\right) \, ,
	\end{aligned}
\end{equation}
where 
$Z_{1,2K, J, H}^{++}
=\sum_{\eta:\{2, \dots, 2K\}\to \{+1, -1\}^{2K}} 
\exp\left(-\gG \sum_{k=1}^{2K} \ind_{\eta_k\neq \eta_{k+1}} 
+\sum_{k=2}^{2K} H_k\eta_k\right)
$
is the partition of function of the RFIC  on the system $\{2, \dots, 2K\}$, with external field $H=(H_k)_{k\ge 2}$.


\subsubsection*{Third step.}

We now focus on $Z_{1,2K, J, H}^{++}$, which is the partition function associated to an external field with values that alternate signs and are large: $|H_k| \ge \gG-c_\gG$ by definition of $H_k$. Having in mind Fact \ref{th:cor_Donsker} and Remark \ref{rem:excess_is_large}, the $|H_k|$'s are in fact typically larger than, say, $\gG+c_\gG$ and this auxiliary chain should tend to align with the external field. Let us make this more rigorous.

Let us denote by $I_K$ the indices $k \in \{2, \dots, 2K\}$ such that $H_k\in[\gG-c_\gG, \gG+c_\gG]$. Let us consider a configuration $\eta\in\{+1, -1\}^{\{2, \dots, 2K\}}$. For every $k\in\{2, \dots, 2K\}\setminus I_K$, if $\eta_k$ is not aligned with $H_k$, flipping $\eta_k$ induces an energy gain of at least $2c_\gG$. 
Doing so we end with a configuration which is such that $\eta_k$ is aligned with $H_k$ for each $k\in\{2, \dots, 2K\}\setminus I_K$. We observe that there are at most $2^{\#I_K}$ such configurations; for which we simply bound the energy by the maximal energy
\begin{equation}
M_{1,2K, J, H}^{++}
:=\sup_{\eta:\{2, \dots, 2K\}\to \{+1, -1\}^{2K}} -\gG 
\sum_{k=1}^{2K} \ind_{\eta_k\neq \eta_{k+1}} 
+\sum_{k=2}^{2K} H_k\eta_k \, .
\end{equation}
It follows that:
\begin{equation}
	\begin{aligned}
	Z_{2K, J, H}^{++}
	& \le \exp(M_{1,2K, J, H}^{++}) \times  2^{\# I_K} \times \sum_{i\ge 0} \binom{2K-1}{i} \exp(-2 i c_\gG)\\
	& = \exp(M_{1,2K, J, H}^{++}) \times  2^{\# I_K}   \times (1+e^{-2c_\gG})^{2K-1}\, .
	\end{aligned}
\end{equation}
Let us finally observe that $M_{1,2K, J, H}^{++} + S_{u_1}+S_{t_{2k}}-S_{u_{2K}}$ can be intrepeted as the maximum defining $M_{N, J, h}^{++}$, restricted to a smaller set: the set of configurations having walls at positions of $(\gG-c_\gG)$-extrema. Hence, $M_{1,2K, J, H}^{++} + S_{u_1}+S_{t_{2k}}-S_{u_{2K}}$ is not larger than $M_{N, J, h}^{++}$ (in fact, they are equal, due to Fact \ref{th:fact_maximal_config}).

\subsubsection*{Conclusion.}
Combining the estimates obtained in the previous steps, taking the $\log$ and dividing by $N=N_K=t_{2K}$ (and using the fact that $2K-1\le N$), we get:
\begin{equation}
	\begin{aligned}
		\frac{1}{N} \log\left(Z_{N, J, h}^{++}\right) \le  & \frac{M_{N, J, h}^{++}}{N} + 2 \log\left(1+e^{-2c_\gG}\right)  
		\\
		& + \frac{1}{N}
		\sum_{k=1}^{2K} \log\left( 
		\sum_{u_{k-1} \le n_k \le u_{k+1}^+} \exp\left(2(-1)^k (S_{t_k} -S_{u_k}) \right)
		\right)\\
		& + \frac{\#I_K}{N}\log 2
	\end{aligned}
\end{equation}
Let us now take $K$ to infinity. Then $N$ goes to infinity and, by Lemma \ref{th:def_free_ener}, $\frac{1}{N} \log\left(Z_{N, J, h}^{++}\right) $ converges almost surely towards $\calF(J)$, while $\frac{M_{N, J, h}^{++}}{N}$ converges almost surely towards $\calM(J)$. Using the LLN, almost surely,
\begin{equation}
\frac{N}{2K}=\frac{t_{2K}}{2K}\underset{K\to\infty}{\longrightarrow}
\frac{\E[t_2(\gG - c_\gG)]}{2} = \frac{\E[L_{\gG-c_\gG}^\downarrow+L_{\gG-c_\gG}^\uparrow]}{2}\, ,
\end{equation}
while (using also Remarks \ref{rem:LLN} and \ref{rem:rotation_invariance})
\begin{equation}
\frac{1}{2K}
		\sum_{k=1}^{2K} \log\left( 
		\sum_{u_{k-1} \le n_k \le u_{k+1}^+} \exp\left(2(-1)^k (S_{n_k} -S_{u_k}) \right)
		\right) 
\end{equation}
converges as $K\to\infty$ towards 
\begin{equation}\label{eq:in_proof_th_Elog}
\E\left[\log\left(\sum_{u_{-1}\le n\le u_{+1}^+} e^{-2S_n^{(\gG)}}\right)\right]
\end{equation}
and
\begin{equation}
\frac{\#I_K}{2K}\underset{K\to\infty}{\longrightarrow}
\frac{\p[H_{\gG-c_\gG}^\downarrow \le \gG+c_\gG] +\p[H_{\gG-c_\gG}^\uparrow \le \gG+c_\gG]}{2}\, .
\end{equation}
It follows that 
\begin{equation}
	\begin{aligned}
		& \calF(J) \le 
		\calM(J) + 2 \log\left(1+e^{-2c_\gG}\right) 
		+ \frac{\E[L_{\gG-c_\gG}^\downarrow+L_{\gG-c_\gG}^\uparrow]}{2} \times  \\
		& \left( \E\left[\log\left(\sum_{u_{-1}\le n\le u_{+1}^+} e^{-2S_n^{(\gG)}}\right)\right] 
		+ \frac{\p[H_{\gG-c_\gG}^\downarrow \le \gG+c_\gG] +\p[H_{\gG-c_\gG}^\uparrow \le \gG+c_\gG]}{2} \right) \, .
	\end{aligned}
\end{equation}

Now, as $\gG\to\infty$ we have, due to Corollary \ref{th:cor_Donsker}, that 
\begin{equation}
\frac{\E[L_{\gG-c_\gG}^\downarrow+L_{\gG-c_\gG}^\uparrow]}{2} \sim \frac{(\gG-c_\gG)^2}{\vartheta^2}  \sim \frac{\gG^2}{\vartheta^2}\, ,
\end{equation}
and, due to Fact \ref{th:fact_Donsker}, that
\begin{equation}
\frac{\p[H_{\gG-c_\gG}^\downarrow \le \gG+c_\gG] +\p[H_{\gG-c_\gG}^\uparrow \le \gG+c_\gG]}{2} \to 0\, .
\end{equation}
For the term in \eqref{eq:in_proof_th_Elog}, use Lemma \ref{th:lem_Elog_fine}: this concludes the proof of the upper bound in Theorem \ref{th:dev_calF}.

\section{Proof of Theorem \ref{th:dev_calM}}\label{sec:proof_th:dev_calM}

In this section, we prove Theorem \ref{th:dev_calM}. 
To simplify expressions and avoid recurring factors of 2, we adopt the following notation. As in the previous sections, we work with the parameter $\gG=2J$. Additionally, we define
\begin{equation}
\mathbf{z}_n:= 2 h_n\, , \qquad n=1, 2, \dots 
\end{equation}
and 
\begin{equation}
T_0:=0 \quad \text{ and } \quad  T_n:= \sum_{i=1}^n \mathbf{z}_i = 2S_n , \qquad n=1, 2, \dots 
\end{equation}
We also denote by $\zeta$ the law of $\mathbf{z}_1=2h_1$, i.e., the image of $\mu$ under the dilation map $x\mapsto 2x$. Then, $\mathbf{z}=(\mathbf{z}_n)_{n\in\N}$ is an i.i.d. sequence with marginal law $\zeta$, and $T=(T_n)_{n\ge 0}$ is the corresponding random walk started at 0.

\subsection{Integral formula for $\calM(J)$}\label{sec:integral_form_calM}

Let $M_0^+ = 0$, $M_0^- = -2J$, and for $n\ge 1$, $M_n^+ = M_{n, J, h}^{++}$ and $M_n^- = M_{n, J, h}^{+-}$. By distinguishing according to the value of the spin at position $n$, we get the recurrence relation
\begin{equation}\label{eq:recursion_M+_M-}
\begin{cases}
M^+_{n+1} & =  \max( M_n^+ + h_{n+1},  M_n^- - 2J - h_{n+1} )\, ,\\
M^-_{n+1} & =  \max( M_n^+ - 2J + h_{n+1}, M_n^- - h_{n+1} )\, ,
\end{cases}
\end{equation}
for every $n\ge 0$. Now, let us consider $X_n:= M_n^+ - M_n^-$. We have $X_0=2J$ and we derive from \eqref{eq:recursion_M+_M-} the recurrence relation
\begin{equation}\label{eq:recursion_X}
X_{n+1} = h_\gG ( X_n + \bfz_{n+1})\, ,
\end{equation}
where we recall that $\gG=2J$ and $\bfz_n=2h_n$ and we set
\begin{equation}
h_\gG(x)=
\begin{cases}
-\gG & \text{ if } x\le -\gG \, ,   \\
x & \text{ if } -\gG \le x \le \gG  \, ,  \\
\gG & \text{ if }x\ge \gG\, .
\end{cases}
\end{equation}

Note that the process $X=(X_n)_{n\ge 0}$ is a Markov chain; we denote its transition kernel by $\pi_\gG$. Since $\widehat{h}_\gG$ sends $\R$ to $[-\gG, \gG]$, we may consider that the state space of the chain is reduced to $[-\gG, \gG]$ instead of $\R$.

This chain is positive recurrent ($\gG$ and $-\gG$ are accessible states) and a simple coupling argument shows that it has a unique invariant probability measure. Indeed, for $x, y\in[-\gG, \gG]$, let us couple two copies of the chain $X$, started respectfully at $x$ and at $y$ by driving them by the same i.i.d. sequence $(\logZ_n)_{n\ge0}$; let us assume without loss of generality that $x > y$, then the chains will stay ordered in this way, and they will meet at latest when the chain started from $x$ hits $-\gG$, and will evolve together afterwards. Since, clearly, this will happen almost surely 
(recall that $\zeta$ is centered and non-trivial), the uniqueness of the invariant measure follows. 
We denote by $\nu_\gG$ the unique invariant probability measure of the chain $X$.  

Moreover, rewriting the first equation in \eqref{eq:recursion_M+_M-} as 
\begin{equation} 
  M^+_{n+1} = M_n^+ + h_{n+1} + \max( 0 ,  - X_n - \gG - \bfz_{n+1} ) \, ,
\end{equation} 
and since $M_0^+=0$, we have for all positive integers $N$
\begin{equation}
M_{N}^{+} =
\sum_{n=0}^{N-1} h_{n+1} + \max(0, - X_n -\gG- \bfz_{n+1})\, .
\end{equation}
Diving by $N$ and using the LLN and the ergodic theorem yields the almost sure convergence
\begin{equation}\label{eq:integr_form_calM}
\frac{M_N^{++}}{N} \underset{N\to\infty}{\longrightarrow}
\iint_{\R^2} \max(0, -x-z-\gG) \nu_\gG(d x) \zeta(\dd z)\, .
\end{equation}
We have thus established the second convergence in Lemma \ref{th:def_free_ener} once again, this time with an explicit formula for the limiting maximal energy density, given by the right-hand side of \eqref{eq:integr_form_calM}. Let us denote for every probability measure $\nu$ on $\R$
\begin{equation}\label{eq:definition_calM_gG}
  \calM_\gG[\nu] = \iint_{\R^2} \max(0, -x-z-\gG) \nu(d x) \zeta(\dd z)\, .
\end{equation}
The limiting maximal energy density $\calM(J)$ is equal to $\calM_\gG[\nu_\gG]$.



\subsection{Looking from the edge: the reduced chain $Y$}
\label{sec:dev_max_ener:Y-MC}
If we sit on $-\gG $, that is if we make it our new origin, in the limit as $\gG \to \infty$ the Markov chain becomes
\begin{equation}
\label{eq:dev_max_ener:iterY}
Y_{n+1}\, =\,  \max(Y_n + \logZ_{n+1}, 0 ) \, .
\end{equation}
Since one iteration of the chain sends $Y$ inside $[0, \infty)$, we consider its state space to be $[0, \infty)$. This chain is sometimes refered to in the literature as the Lindley process.

\medskip

\begin{theorem}\label{thm:dev_max_ener:stationary_asymptotics}
Assume \eqref{eq:assump_mu_initial}. 
Then, the Markov chain $Y$ has a unique nonzero invariant 
measure $\nu$. 
This measure $\nu$ satisfies that ${\nu([0,x])< \infty}$ for every $x\ge 0$, and $\nu([0, \infty))= \infty$. 
Moreover, $Y$ is recurrent in the sense that if $O\subset [0, \infty)$ is an open set with $\nu (O)>0$, then, given any $Y_0$,
$\bbP(Y_n \in O \text{ i.o.})=1$.

\smallskip




 
	Furthermore, additionally assuming hypothesis {\bf (H-1)} of Theorem \ref{th:dev_calM} and the following condition: 
	 \begin{description}
	\item[(T)] there exists $\xi > 3$ such that 
	$\int (z^+)^\xi \zeta(\dd z) < \infty$; 
	\end{description}
	the measure $\nu$ satisfies that
	\begin{equation} 
		\label{eq:dev_max_ener:stationary_asymptotics}
		\nu([0, x])\overset{x\to \infty}{=}  c_\nu x+ d_\nu + O(1/x^{\xi-3})\, .
	\end{equation}     
	for appropriately chosen constants $c_\nu>0$ and $d_\nu\in \bbR$. 
	  
	Finally, if {\bf (T)} is replaced by 
	\begin{description}
		\item[(T$^\prime$)] there exists $c>0$ such that $\int \exp(cz) \zeta(\dd z) <  \infty$,
	\end{description}	  
	  \noindent then \eqref{eq:dev_max_ener:stationary_asymptotics} holds with error term $O\big(\exp\big(-\gd x \big)\big)$, for some positive constant $\gd$.
	  \end{theorem}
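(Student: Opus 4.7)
The plan is to identify the invariant measure $\nu$ of the Lindley chain $Y$ with the renewal measure of the strict ascending ladder heights of the walk $T=(T_n)_{n\ge 0}$, and then to read the required properties from classical fluctuation and renewal theory. Recall the strict ascending ladder time $\alpha_\lar=\inf\{n\ge 1:T_n>0\}$ and height $H_\lar=T_{\alpha_\lar}$, and let $H_\lar^{(1)},H_\lar^{(2)},\dots$ denote i.i.d.\ copies of $H_\lar$. My candidate is
\begin{equation*}
\nu(A)\,:=\,\sum_{k\ge 0}\bbP\!\left(H_\lar^{(1)}+\cdots+H_\lar^{(k)}\in A\right).
\end{equation*}
Invariance of this $\nu$ under the Lindley kernel is standard: it follows from the identity in law $Y_n\stackrel{d}{=}\max_{0\le k\le n}T_k$ for $Y_0=0$, which says that the one-dimensional marginals of $Y$ coincide with the running maximum of $T$.

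For the qualitative part, I would first observe that since $\zeta$ is centered with finite second moment, the walk $T$ is recurrent by Chung--Fuchs, which makes $Y$ null-recurrent. Hence $\nu$ is the unique $\sigma$-finite invariant measure up to multiplicative constant (on the irreducibility class, which under \textbf{(H-1)} is all of $[0,\infty)$), and the recurrence statement in the theorem transfers directly from recurrence of $T$. Finiteness of $\bbE[H_\lar]$ follows from the Wiener--Hopf factorisation identity $2\bbE[H_\lar]\bbE[\alpha_\rar]=\bbE[\bfz^2]$, so the elementary renewal theorem gives both $\nu([0,x])<\infty$ for every finite $x$ and $\nu([0,x])\sim x/\bbE[H_\lar]$, whence $\nu([0,\infty))=\infty$.

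For the quantitative expansion, the tool is a second-order (Stone/Carlsson--Wainger) renewal theorem: provided the law of $H_\lar$ is spread out and $\bbE[H_\lar^{p}]<\infty$ for some $p>2$, one has
\begin{equation*}
\nu([0,x])=\frac{x}{\bbE[H_\lar]}+\frac{\bbE[H_\lar^{\,2}]}{2\,\bbE[H_\lar]^{2}}+O\!\left(x^{-(p-2)}\right),\qquad x\to\infty.
\end{equation*}
Hypothesis \textbf{(H-1)} propagates to spread-outness of $H_\lar$ by decomposing according to the value of $\alpha_\lar$ and using that $T_{n_0}$ has a density; and under \textbf{(T)} the classical moment transfer of Chow/Doney yields $\bbE[H_\lar^{\xi-1}]<\infty$ from $\bbE[(\bfz^+)^{\xi}]<\infty$, so one applies the expansion with $p=\xi-1>2$ and obtains the advertised error rate $O(x^{-(\xi-3)})$. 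This gives $c_\nu=1/\bbE[H_\lar]$ and $d_\nu=\bbE[H_\lar^{\,2}]/(2\,\bbE[H_\lar]^{2})$. Under \textbf{(T$^\prime$)}, the same route works: an exponential moment for $H_\lar$ is inherited from one of $\bfz^+$, and the exponentially sharp form of Stone's theorem produces the bound $O(\exp(-\gd x))$ for some $\gd>0$.

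The main obstacle I expect is the careful execution of this two-step moment/tail transfer followed by the correct invocation of a quantitative renewal theorem: one has to move from a hypothesis on $\bfz^+$ to the matching moment of $H_\lar$ (only the positive tail of the step matters, which is exactly what Chow--Doney-type estimates deliver), verify that the spread-out condition on $H_\lar$ coming from \textbf{(H-1)} is strong enough to drive the error terms in Stone's decomposition uniformly, and retain the exact exponent $\xi-3$ (respectively, an exponential rate) all the way through. None of these steps is new, but the bookkeeping — and, in the exponential case, the identification of an explicit admissible $\gd$ — is the delicate point.
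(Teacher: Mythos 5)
Your proposal follows essentially the same route as the paper: identify the invariant measure $\nu$ of the Lindley chain with the renewal measure of the strict ascending ladder heights of $T$, then feed a second-order renewal theorem with the moment/tail transfer from the step law to $H_\lar$ and the spread-outness of $H_\lar$ coming from \textbf{(H-1)}. The paper reaches the renewal-measure identification via the loop (excursion) representation at the atom $0$ plus a time-reversal of the walk on each excursion, whereas you invoke the duality $Y_n\stackrel{d}{=}\max_{0\le k\le n}T_k$ and state that invariance of the renewal measure is standard; this is correct but is really the same duality in a less explicit form, and the paper's version is preferable because it directly produces the formula $\nu([0,x])=\nu(\{0\})\sum_{k\ge 0}\bbP[H_k\le x]$ and justifies finiteness of $\nu([0,x])$ along the way. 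For the quantitative step the paper defers to the renewal lemma of Section 4.3 of \cite{CGGH25}, while you cite Stone/Carlsson--Wainger plus the Chow--Doney moment transfer directly; both give the same expansion and the same constants $c_\nu=1/\bbE[H_\lar]$, $d_\nu=\bbE[H_\lar^2]/(2\bbE[H_\lar]^2)$.

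One concrete error: the identity you quote as ``Wiener--Hopf,'' namely $2\bbE[H_\lar]\bbE[\alpha_\rar]=\bbE[\bfz^2]$, cannot hold, because for a centered recurrent walk $\bbE[\alpha_\rar]=\infty$ while the right-hand side is finite. The correct statement (for walks with no atom at $0$ at positive times) is $\bbE[H_\lar]\,\bbE[H_\rar]=\bbE[\bfz^2]/2$, i.e.\ a product of the two \emph{ladder heights}, not a ladder epoch. The consequence you actually need, $\bbE[H_\lar]<\infty$ under finite second moment, is nevertheless true, so this misstatement does not propagate. You should also be a little more careful in claiming spread-outness of $H_\lar$ from \textbf{(H-1)}: the decomposition over $\alpha_\lar=n$ only uses steps $n\ge n_0$, and one has to check that the event $\{\alpha_\lar\ge n_0\}$ has positive probability and that conditioning on it does not destroy the density coming from $T_{n_0}$; this is routine but should be said.
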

	 \medskip

For nonnormalizable measures uniqueness is of course meant \emph{up to  a multiplicative constant} and we note that $\nu$ is characterized by 
\begin{equation}
\label{eq:dev_max_ener:fornu}
\int_\R  g(y) \nu (\dd y)\, =\, \iint_\R  g((y+z)_+) \gz(\dd z)  \, \nu (\dd y)\ \ \text{ for every measurable } g \ge 0\,  .
\end{equation}

We stress that, since the invariant measures are infinite, what is relevant is the ratio $d_\nu/c_\nu$, for which the proof will provide the following expression:
\begin{equation}\label{eq:value_ratio_d/c}
\frac{d_\nu}{c_\nu}= \frac{\bbE[H_1^2]}{2\bbE[H_1]}\, ,
\end{equation}
where $H_1=T_{\alpha_1}$, with $\alpha_1=\inf\{n\ge 1: T_n>0\}$.
When applying \Cref{thm:dev_max_ener:stationary_asymptotics} we will set $c_\nu=1$, see \eqref{eq:dev_max_ener:asymptlar}.

\begin{proof}
We observe that 0 is an accessible and recurrent state for the Markov chain $Y$. Indeed, recall that $(\logZ_n)_{n=1, 2, \ldots}$ is an IID sequence with law $\zeta$ which, under \eqref{eq:assump_mu_initial}, is centered and has a positive and finite variance and that $(T_n)$ is the corresponding random walk starting at $0$: $T_0=0$ and $T_n:= \sum_{j=1}^n \bfz_j$ for $n=1,2, \ldots$. Of course this random walk is null recurrent. Let us now consider $Y$ started from some point $y_0\ge 0$. We introduce
\begin{equation}
\tau_0 = \inf \{n \ge 1 : y_0+T_n \le 0\}\, .
\end{equation}
We note that:
\begin{equation}\label{eq:linkx+T-Y}
	\forall \,  0\le n < \tau_0, Y_n=y_0+T_n, \qquad \text{and} \qquad Y_{\tau_0}=0
\end{equation}
so that $\tau_0$ is the return time to 0 of $Y$.
By recurrence of $T$, $\tau_0$ is almost surely finite, hence $Y$ almost surely hits 0. It follows that $Y$ admits a unique invariant measure $\nu$, up to a multiplicative constant, and that $\nu(\{0\})<\infty$.
The above arguments show that the expected time between two visits of $Y$ at 0 is infinite, hence $Y$ is null recurrent and any nonzero invariant measure $\nu$ is infinite. 
Furthermore, the loop representation of $\nu$ yields for every $x\ge 0$:
\begin{equation}
\nu([0, x]) = \nu(\{0\}) \bbE_0\left[\sum_{n=0}^{\tau_0-1} \ind_{Y_n \le x} \right]
\end{equation}
We argue that $\nu([0, x])< \infty$ by pointing out that, for fixed $x$, there exists a time $t$ such that, starting from any point in $(0, x]$, $Y$ has a positive probability of hitting 0 before time $t$, uniformly on the starting point. Consequently we can bound the time spent in $[0, x]$ before the first return time to 0 by $t$ times a geometric variable, hence $\nu([0, x])$ is finite.

Let us now assume {\bf (H-1)} and {\bf (T)} (respectively, {\bf (T$^\prime$)}) to hold and further exploit the loop representation of $\nu$. 
We introduce the strictly ascending ladder times of the random walk $T$:
\begin{equation}
\alpha_0:=0 \, , \quad 
\text{and, for } k\ge 1, \quad 
\alpha_k:=\min\{n\ge \alpha_{k-1}: T_n > T_{\alpha_{k-1}}\} \, , 
\end{equation} 
as well as the associated sequence of strictly ascending ladder heights 
$H_k:=T_{\alpha_k}$.
We note that, a.s., for every $k$, $\alpha_k< \infty$ and $H_k< \infty$.

We use the observation \eqref{eq:linkx+T-Y} with $x=0$ to derive that for $x\ge 0$,
\begin{equation}
\begin{aligned}
\nu([0, x]) & = \nu(\{0\}) \bbE_0\left[\sum_{n=0}^{\tau_0-1} \ind_{Y_n \le x} \right]\\
& = \nu(\{0\})   \sum_{n=0}^\infty  \bbP[T_n \le x, T_1 >0, T_2>0, \dots, T_n>0]\\
& =  \nu(\{0\})  \sum_{n=0}^\infty  \bbP[T_n \le x, T_n>T_{n-1}, \dots, T_n>T_1, T_n>0]\\
& =  \nu(\{0\})  \sum_{k=0}^\infty  \bbP[H_k\le x ]  \, ,
\end{aligned}
\end{equation}
where we used the time reversal of $T$ between the second and the third line.

Now, we can use the renewal lemma to control the sum $\sum_{k=0}^\infty  \bbP[H_k\le x ]$. We refer to Section 4.3 of \cite{CGGH25} because the renewal lemma is applied there exactly in the same situation as for us, except that in \cite{CGGH25}, the $H_k$'s are defined by \emph{weak} (instead of strict) ascending ladder heights. We have, as $x\to\infty$, for some $\delta>0$, 
\begin{equation}
\nu([0, x]) =\nu(\{0\})  \left( \frac{x}{\E[H_1]} +   \frac{\bbE[H_1^2]}{2\bbE[H_1]^2} \right)
+
\begin{cases}
O(1/x^{\xi-3}), \qquad &\mbox{under {\bf (T)}} , \\
O(\exp(-\delta \,x) ) , \qquad &\mbox{under {\bf (T$^\prime$)}} \, .
\end{cases}   
\end{equation}
The proof is therefore complete.
\end{proof}

\subsection{Contraction estimate}\label{sec:dev_max_ener:contraction}

We denote by $d(\mu, \nu)$ the Wasserstein-1 distance between two probability measures $ \mu$ and $\nu$ on $\R$. 
We observe that for every measures $\mu, \nu$ on $\R$, we have that $|\calM_\gG[\mu]-\calM_\gG[\nu]| \le d(\mu, \nu)$, with $\calM_\gG$ defined in \eqref{eq:definition_calM_gG}.

Recall the notation $\pi_\gG$ for the transition kernel of the Markov chain $X$, which is defined by \eqref{eq:recursion_X}.

\begin{lemma}\label{lem:iter}
There exists positive constants $c$ and $C$, and $\gG_0>0$ such that for every $\gG>\gG_0$ and every $N\ge 0$, for every probability measures $\mu$, $\nu$ on $\R$:
\begin{equation}
d(\mu \pi_\gG^n, \nu \pi_\gG^n)\le C \exp\left(-c \frac{n}{\gG^2}\right) \times d(\mu, \nu).
\end{equation}
\end{lemma}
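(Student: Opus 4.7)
My plan is to establish the contraction via the synchronous (common-noise) coupling. Given probability measures $\mu, \nu$, a single application of $\pi_\gG$ sends both onto $[-\gG,\gG]$, so (absorbing one step into $C$) I may assume they are supported on $[-\gG,\gG]$. Take an optimal Wasserstein-$1$ coupling $(X_0, X'_0)$ of $(\mu, \nu)$ and iterate
\[X_{n+1} = h_\gG(X_n + \bfz_{n+1}), \qquad X'_{n+1} = h_\gG(X'_n + \bfz_{n+1}),\]
driven by the same iid sequence $(\bfz_n)$ of law $\zeta$. Since $x \mapsto h_\gG(x+z)$ is $1$-Lipschitz and non-decreasing, the gap $|X_n - X'_n|$ is almost surely non-increasing, and it vanishes after the coupling time $\sigma_{x,y} := \inf\{n : X^x_n = X^y_n\}$. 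As $\bbP[\sigma_{x,y} > n]$ is a deterministic function of the starting pair,
\[d(\mu \pi_\gG^n, \nu \pi_\gG^n) \le \bbE[|X_n - X'_n|] \le d(\mu, \nu) \cdot \sup_{x, y \in [-\gG, \gG]} \bbP[\sigma_{x,y} > n],\]
and the lemma reduces to a uniform tail bound on $\sigma_{x,y}$.

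The core ingredient is a Doeblin-type lemma: there exist $p \in (0,1)$, $C_0 > 0$ and $\gG_0 > 0$, depending only on $\zeta$, such that for every $\gG \ge \gG_0$ and every $(x, y) \in [-\gG, \gG]^2$,
\[\bbP\bigl[\sigma_{x,y} \le \lfloor C_0 \gG^2 \rfloor\bigr] \ge p.\]
To produce this I construct an event $\calA_\gG$ on $(\bfz_m)_{m \le K}$, $K = \lfloor C_0 \gG^2 \rfloor$, whose probability stays bounded below uniformly in $\gG$ and on which coalescence is forced. A natural candidate is the event that the rescaled partial-sum path $t \mapsto T_{\lfloor Kt \rfloor}/\gG$ lies in an open tube around a fixed continuous function $\varphi : [0,1] \to \R$ engineered so that: (i) during some subinterval both un-truncated walks $x + T_m$ and $y + T_m$ are pushed simultaneously above $\gG$, placing both truncated chains at the upper boundary at the same step; (ii) a subsequent small-increment interval registers coalescence before either chain is kicked back into the interior. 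Donsker's invariance principle, whose hypotheses are exactly \eqref{eq:assump_mu_initial}, yields that $\bbP[\calA_\gG]$ converges to the (positive) Brownian probability of the corresponding Wiener tube, so $\bbP[\calA_\gG] \ge p > 0$ for $\gG$ large.

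With this Doeblin step in hand, iterating the Markov property for the joint chain $(X_n, X'_n)$ at times $K, 2K, \ldots$ gives
\[\sup_{x,y \in [-\gG, \gG]} \bbP[\sigma_{x,y} > jK] \le (1-p)^j, \qquad j \ge 0.\]
Writing an arbitrary $n$ as $jK + r$ with $0 \le r < K$ and setting $C := (1-p)^{-1}$, $c := -\log(1-p)/C_0$ yields
\[\sup_{x,y} \bbP[\sigma_{x,y} > n] \le (1-p)^{\lfloor n/K \rfloor} \le C \exp\!\left(-\frac{cn}{\gG^2}\right),\]
valid for all $\gG \ge \gG_0$. Combined with the reduction of the first paragraph this gives the lemma for $\gG \ge \gG_0$; the finitely many values $\gG \in [1,\gG_0]$ are absorbed into the constants since for each such $\gG$ the chain $X$ has a positive spectral gap on a compact state space.

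The main obstacle is the Doeblin step: exhibiting one noise event of probability bounded away from $0$ \emph{uniformly in the diverging parameter $\gG$} on which the common-noise coupling achieves genuine coalescence from arbitrary starting points of $[-\gG,\gG]$. The invariance principle gives the correct diffusive scaling, but the discrete subtleties --- overshoots at the truncation boundary, and the possibility that the un-truncated walks $x + T_m$ and $y + T_m$ exit $(-\gG,\gG)$ on opposite sides before meeting --- have to be ruled out by the careful design of $\varphi$, so that what the event produces is actual coalescence rather than mere proximity.
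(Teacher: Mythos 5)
Your synchronous-coupling framework is the same as the paper's, and the reduction to a uniform tail bound on the coalescence time is the right move. But your Doeblin step --- the only nontrivial part --- is misformulated, and the concern you flag at the end is a real gap. The claim in your step (i), that both un-truncated walks $x + T_m$ and $y + T_m$ lying above $\gG$ at the same time places both truncated chains at the upper boundary, is simply false: the truncated chain may have been lifted at the \emph{lower} boundary $-\gG$ at an earlier step, after which it sits strictly below $x + T_m$, so the position of the un-truncated walk says nothing about where $X^x_m$ is. Step (ii) is also superfluous (once $X^x_m = X^y_m$ they are driven by the same deterministic map and are equal forever), and the worry about ``exiting $(-\gG,\gG)$ on opposite sides'' never arises, since $x \mapsto h_\gG(x+z)$ is monotone and the synchronous coupling preserves order. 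So the proposal as written does not produce a coalescence event of uniformly positive probability, and you correctly sense this but do not close it.

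What closes it is a clean pathwise observation, which is what the paper uses: if $t_1(\gG)$ is the first time the cumulative walk $T$ records a drop of $2\gG$ or more (the $\ttt_1$ of Section~\ref{sec:definitions} rescaled), then on $\{t_1(\gG) \le n\}$ every chain started in $[-\gG,\gG]$ and driven by $(\bfz_m)_{m\le n}$ must visit $-\gG$ before $t_1(\gG)$. Indeed, if the lower truncation never activates then $X_{k+1} \le X_k + \bfz_{k+1}$ at every step, and the drop of $2\gG$ would push $X$ below $-\gG$, a contradiction. By monotonicity the two synchronously coupled chains hit $-\gG$ simultaneously and coalesce, so $X_n = X'_n$ for all $n \ge t_1(\gG)$. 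This is a deterministic fact --- no invariance principle is invoked to produce the coalescence itself --- and it gives $d(\mu\pi_\gG^n,\nu\pi_\gG^n) \le \bbP[t_1(\gG) > n]\,d(\mu,\nu)$ directly. Donsker scaling enters only where it belongs: to get $\bbP[t_1(\gG) > \lfloor C\gG^2\rfloor] \le e^{-1}$ for $\gG$ large, after which independence over disjoint windows of length $\lfloor C\gG^2 \rfloor$ (the submultiplicativity in \eqref{eq:uniform_integr_t_1}) delivers the geometric tail, exactly the iteration you proposed. You should replace your tube event by the event ``$T$ drops by $2\gG$ before time $\lfloor C\gG^2 \rfloor$,'' whose probability is both easy to bound below and actually forces coalescence.
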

\begin{proof}
Since $h_\gG$ is 1-Lipshitz and sends $\R$ to $[-\gG, \gG]$, we can assume without loss of generality that $\mu$ and $\nu$ are measures on $[-\gG, \gG]$.
Let us pick a coupling $(X_0, X_0')$ of $\mu$ and $\nu$. As in Section \ref{sec:integral_form_calM}, let us couple the Markov chain started at $X_0$ and at $X_0'$ using the same i.i.d. sequence  $(\bfz_n)_n$ (independent of $(X_0, X_0')$):
\begin{equation}
X_{n+1}= h_\gG(X_n+\bfz_{n+1})\, , \qquad X_{n+1}'= h_\gG(X_n'+\bfz_{n+1})\, .
\end{equation}
First, we note that, since $h_\gG$ is 1-Lispschitz, we have that for every $n\ge 0$, $|X_n-X_n'|\le |X_0-X_0'|$.
Secondly, we claim that, using the notation $t_1(\gG)$ introduced in Section \ref{sec:definitions} for the first instant where a drop of $2\gG$ or larger has appeared in the walk $T$, we have $X_n=X_n'$ for every $n\ge t_1(\gG)$. Indeed, at time $t_1(\gG)$ both processes are at $\gG$ and they remain together afterwards.
We thus derive the following bound:
\begin{equation}
d(\mu \pi_\gG^n, \nu \pi_\gG^n)\le \bbP[ t_1(\gG)>n] \times \bbE[|X_0-X_0'|].
\end{equation}
Optimizing over the couplings of $\mu$ and $\nu$ yields
\begin{equation}
d(\mu \pi_\gG^n, \nu \pi_\gG^n)\le \bbP[ t_1(\gG)>n] \times d(\mu, \nu).
\end{equation}
Finally, we use the bound \eqref{eq:uniform_integr_t_1} established in the proof of Corollary \ref{th:cor_Donsker}.
\end{proof}

 Recall that $\nu_\gG$ is the invariant measure of the Markov chain $X$. From Lemma \ref{lem:iter} we derive the following corollary.
\begin{corollary}\label{thm:dev_max_ener:contraction_cor}
There exists $\gG_1>0$ and a constant $C'$ such that for every $\gG>\gG_1$ and every measure $\nu$ on $[-\gG, \gG]$, we have:
\begin{equation}
d(\nu, \nu_\gG) \le C' \gG^2 d(\nu, \nu \pi_\gG)\, .
\end{equation}
\end{corollary}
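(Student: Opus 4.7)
The proof plan is a standard telescoping argument that converts the exponential-in-$n/\gG^2$ contraction of Lemma \ref{lem:iter} into a one-step bound, at the cost of a factor $\gG^2$.

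First I would record the elementary observation that $\pi_\gG$ acts as a weak contraction on $W_1$: the coupling used in the proof of Lemma \ref{lem:iter} (driving the two chains by the same innovations $\bfz_{n+1}$ and exploiting the fact that $h_\gG$ is $1$-Lipschitz) gives $d(\mu \pi_\gG, \mu' \pi_\gG)\le d(\mu, \mu')$ for any probability measures $\mu, \mu'$. In particular, telescoping yields, for every integer $N\ge 1$,
\begin{equation}
d(\nu, \nu\pi_\gG^N) \,\le\, \sum_{n=0}^{N-1} d(\nu\pi_\gG^n, \nu\pi_\gG^{n+1}) \,=\, \sum_{n=0}^{N-1} d\bigl((\nu)\pi_\gG^n, (\nu\pi_\gG)\pi_\gG^n\bigr) \,\le\, N\, d(\nu, \nu\pi_\gG).
\end{equation}

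Next I would use invariance of $\nu_\gG$ together with Lemma \ref{lem:iter}: since $\nu_\gG = \nu_\gG \pi_\gG^N$,
\begin{equation}
d(\nu\pi_\gG^N, \nu_\gG) \,=\, d(\nu\pi_\gG^N, \nu_\gG\pi_\gG^N) \,\le\, C\exp\!\left(-c\frac{N}{\gG^2}\right) d(\nu, \nu_\gG).
\end{equation}
Combining the two displays via the triangle inequality,
\begin{equation}
d(\nu, \nu_\gG) \,\le\, N\, d(\nu, \nu\pi_\gG) + C\exp\!\left(-c\frac{N}{\gG^2}\right) d(\nu, \nu_\gG).
\end{equation}

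Finally I would choose $N = N(\gG) := \lceil (\gG^2/c)\log(2C)\rceil$, which guarantees $C\exp(-cN/\gG^2)\le 1/2$; rearranging gives $d(\nu, \nu_\gG) \le 2N\, d(\nu, \nu\pi_\gG)$. For $\gG \ge \gG_1$ large enough we have $N(\gG) \le C''\gG^2$ for some explicit constant $C''$ depending only on $c$ and $C$, so the corollary holds with $C' = 2C''$. The only implicit point is that $d(\nu, \nu_\gG)$ is finite, which is immediate because both measures are supported on the bounded interval $[-\gG, \gG]$; there is no real obstacle to overcome, and the proof is essentially a direct assembly of Lemma \ref{lem:iter} with the Wasserstein non-expansivity of $\pi_\gG$.
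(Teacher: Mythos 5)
Your proof is correct, and it takes a mildly different route from the paper's, though both ultimately rest on Lemma~\ref{lem:iter}. The paper telescopes all the way: it writes
$d(\nu, \nu_\gG) \le \sum_{n\ge 0} d(\nu\pi_\gG^n, \nu\pi_\gG^{n+1})$
(justified by the convergence $\nu\pi_\gG^n \to \nu_\gG$), bounds each term by $C\exp(-cn/\gG^2)\,d(\nu, \nu\pi_\gG)$ via Lemma~\ref{lem:iter} applied to the pair $(\nu, \nu\pi_\gG)$, and sums the geometric series, giving the constant $C' = 2C/c$ directly with no free parameter to tune. You instead \emph{truncate and bootstrap}: you bound $d(\nu, \nu\pi_\gG^N) \le N\, d(\nu, \nu\pi_\gG)$ using only the $W_1$ non-expansivity of $\pi_\gG$ (a fact the paper proves inside Lemma~\ref{lem:iter} but does not isolate), handle $d(\nu\pi_\gG^N, \nu_\gG)$ by one application of Lemma~\ref{lem:iter} directly to $\nu$ and $\nu_\gG$, and then absorb the tail into the left-hand side after choosing $N \asymp \gG^2$. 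Your variant needs the extra observation that $d(\nu, \nu_\gG)<\infty$ (which you supply, and which is trivial since both measures live on $[-\gG, \gG]$), whereas the paper's infinite telescope sidesteps this by invoking the convergence $\nu\pi_\gG^n \to \nu_\gG$. Both routes are short and yield the same $\gG^2$ scaling; yours is a standard and perfectly sound alternative, with the minor cost of slightly worse bookkeeping of constants.
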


\begin{proof}
Using the fact that $ \nu \pi_\gG^n \to \nu_\gG$ and Lemma \ref{lem:iter}, we write:
\begin{equation}
\begin{aligned}
d(\nu, \nu_\gG) 
& \le \sum_{n\ge 0} d(\nu \pi_\gG^n, \nu \pi_\gG^{n+1})\\
& \le \sum_{n\ge 0} C \exp\left(-c \frac{n}{\gG^2}\right) \times d(\nu , \nu \pi_\gG)\\
& = C \times \frac{1}{1-\exp\left(- \frac{c}{\gG^2}\right)} \times d(\nu , \nu \pi_\gG)
\end{aligned}
\end{equation}
Finally, we use the fact that for large $\gG$ we have $\frac{1}{1-\exp\left(-\frac{c}{\gG^2}\right)} \le 2 \frac{\gG^2}{c}$.
\end{proof}

\subsection{Approximating the invariant probability $\nu_\gG$}

We are now going to patch together the  (infinite!) invariant measure of the $Y$ process, translated to the left of $\gG$,
and  the  (infinite too, of course) invariant measure of the $Y$ process driven by $-\logZ$ instead of  $\logZ$, reflected with respect to the origin
and translated to the right of $\gG$ 
to form a probability measure that we call $\gamma_\gG$.
We expect (and will show) that $\gamma_\gG$ is close to the invariant probability $\nu_\gG$ of the Markov chain $X$. 



Let us introduce some notations for the cumulative mass functions. 
If $\nu$ is a measure on $\R$, such that for every $x\in\R$, $\nu((-\infty, x])<\infty$ we denote its cumulative mass function by $F_\nu$:
\begin{equation}
F_\nu(x) := \nu((-\infty, x]) \, , \qquad x \in \R\, .
\end{equation}
If $X$ is a real random variable, we set 
\begin{equation}
F_X(x) : = \bbP[X \le x]  \, , \qquad x \in \R\, .
\end{equation}

Let $F_\lar(x) := c_\nu^{-1} \nu ([0, x])$ for $x \in \bbR$, where $\nu$ and $c_\nu$ are as in \cref{thm:dev_max_ener:stationary_asymptotics}. So $F_\lar(\cdot)$ is the cumulative function of the (suitably normalized) stationary measure of $Y$  
and Theorem~\ref{thm:dev_max_ener:stationary_asymptotics} yields
\begin{equation}
	\label{eq:dev_max_ener:asymptlar}
	F_\lar(x)\stackrel{ x \to \infty}= x+ c_\lar + \begin{cases} O\left(x^{-(\xi-3)}\right) & \text{ under {\bf (T)}} \, , 
	\\
	O\left(\exp(-\gd x)\right) & \text{ under {\bf (T$^\prime$)}}\, , 
	\end{cases}  
\end{equation}
where $ c_\lar := \frac{\bbE[H_1^2]}{2\bbE[H_1]} $. 
Let $F_\rar$ be the same with $\logZ$ replaced by $-\logZ$, which describes the related object for the right edge instead of the left edge, which then has comparable asymptotic behavior: denote by $c_\rar$ the corresponding constant in \eqref{eq:dev_max_ener:asymptlar}. 
Using this, we define an approximate stationary distribution $\gamma_\gG$ on $[-\R, \R]$ by 
\begin{equation}
\label{eq:dev_max_ener:DHprobability_F}
F_{\gamma_\gG} (x)\, :=\, 
\begin{cases} 
1-\left(F_{\rar}(\gG -x) / C_\gG \right)  & \text{ if } x \ge 0\, ,
\\
F_\lar(x+\gG )/C_\gG  & \text{ if } x \le 0\, .
\end{cases}
\ \ \ \text{ with } C_\Gamma :=
	F_{\lar}(\gG) + F_{\rar}(\gG)\,.
\end{equation}
Note that $C_\Gamma$ has been selected so that $F_{\gamma_\gG}(0)$ is coherently chosen.
We register also that for the risk function probability  $G_{\gamma_\gG} (\cdot)=1-F_{\gamma_\gG} (\cdot)$ we have
\begin{equation}
\label{eq:dev_max_ener:DHprobability_G}
G_{\gamma_\gG} (x)\, :=\, \begin{cases} 
F_{\rar}(\gG -x) / C_\gG   & \text{ if } x \ge 0\, ,
\\
1- \left(F_\lar(x+\gG )/C_\gG \right) & \text{ if } x \le 0\, ,
\end{cases}
\end{equation}

Set
\begin{equation}
\label{eq:dev_max_ener:kappa1}
\widehat{\kappa}_1 : = \frac{1}{2} \iint_{\R^2} \max(0, -x-z) \nu_\lar(x) \gz(\dd z)
\end{equation}
and
\begin{equation}
\label{eq:dev_max_ener:kappa2}
\widehat{\kappa}_2\, :=\, \frac{c_\lar +c_\rar}2\, .
\end{equation}

\medskip

\begin{proposition}
\label{thm:dev_max_ener:onestep}
	Assume that both $\logZ$ and $-\logZ$ satisfy the assumptions of Theorem \ref{thm:dev_max_ener:stationary_asymptotics} (in particular, there exists $\xi>3$ such that $\logZ\in \bbL^{\xi}$, so {\bf (T-2)} becomes superfluous).
	Then for  a suitably chosen $\gd>0$
\begin{equation}
\label{eq:dev_max_ener:Ck}
	C_\Gamma 
	\stackrel{\Gamma \to \infty}=
	2\gG  + 2 \widehat{\kappa}_2 + \begin{cases} O\left(1/\gG^{\xi -3}\right) & \text{ under {\bf (T)} for both $\pm \logZ$}, 
	\\
	O\left(\exp(-\gd \gG)\right) & \text{ under {\bf (T$^\prime$)} for both $\pm \logZ$ }.
	\end{cases} 
\end{equation}  
Moreover
\begin{equation}
		d(\gamma_\gG, \gamma_\gG \pi_\gG)		\,=\,
		\begin{cases} O\left(1/\gG^{\xi-2}\right) & \text{ under {\bf (T)} for both $\pm \logZ$}, 
	\\
	O\left(\exp(-\gd \gG)\right) & \text{ under {\bf (T$^\prime$)} for both $\pm \logZ$}.
	\end{cases} 
		\label{eq:dev_max_ener:onestep}
	\end{equation}
	and
	\begin{equation}
		\label{eq:dev_max_ener:Lyap}
		\calM_\gG [\gamma_\gG ]\,=\, \frac {2\widehat{\kappa}_1}{C_\gG }+ 
		\begin{cases} 
		O(1/\gG^{\xi-2}) &  \text{ under {\bf (T)} for both $\pm \logZ$} , \\
		O(\exp(-c\gG )) &  \text{ under {\bf (T$^\prime$)} for both $\pm \logZ$}  .
		\,,
		\end{cases}
	\end{equation}	
\end{proposition}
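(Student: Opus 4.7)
The statement contains three asymptotic claims, which I address in order. First, \eqref{eq:dev_max_ener:Ck} is immediate: substitute \eqref{eq:dev_max_ener:asymptlar} into $C_\gG=F_\lar(\gG)+F_\rar(\gG)$ and recall $\widehat{\kappa}_2=(c_\lar+c_\rar)/2$. Second, for \eqref{eq:dev_max_ener:Lyap} I would decompose $\gamma_\gG=\gamma_\gG^\lar+\gamma_\gG^\rar$ into its restrictions to $[-\gG,0]$ and $(0,\gG]$ and compute $\calM_\gG[\gamma_\gG]$ from the definition \eqref{eq:definition_calM_gG}. The integrand $\max(0,-x-z-\gG)$ vanishes unless $x+z<-\gG$, which for $x\in(0,\gG]$ forces $z<-\gG$; the contribution of $\gamma_\gG^\rar$ is thus bounded by a tail probability of $\logZ$, of acceptable order. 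On $\gamma_\gG^\lar$ the substitution $y=x+\gG\in[0,\gG]$ converts the density to $\nu_\lar(\dd y)/C_\gG$ and the integrand to $\max(0,-y-z)$; extending the $y$-range to $[0,\infty)$ produces $2\widehat{\kappa}_1/C_\gG$ by \eqref{eq:dev_max_ener:kappa1}, and the truncation error $(1/C_\gG)\int_\gG^\infty\bbE[(-\logZ-y)_+]\,\dd y$ is controlled using the linear asymptotic $\nu_\lar(\dd y)\sim\dd y$ and the tail assumption on $\logZ$, giving $O(1/\gG^{\xi-2})$ under \textbf{(T)} (and exponentially small under \textbf{(T$^\prime$)}).

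For the main estimate \eqref{eq:dev_max_ener:onestep} I test against an arbitrary 1-Lipschitz $g:[-\gG,\gG]\to\R$, centered WLOG so that $g(\gG)=0$. In the shifted coordinate $y=x+\gG\in[0,2\gG]$ the kernel $\pi_\gG$ acts as the Lindley recursion with a ceiling at $2\gG$, namely $y\mapsto\max(\min(y+\logZ,2\gG),0)$. Setting $\widetilde{g}(y):=g(y-\gG)$ and extending $\widetilde{g}\equiv 0$ on $[2\gG,\infty)$ preserves the 1-Lipschitz property since $\widetilde{g}(2\gG)=g(\gG)=0$. The key observation is that, under this extension, the action of $\pi_\gG$ on $\widetilde{g}$ coincides with that of the pure Lindley operator $L\varphi(y):=\int\varphi((y+z)_+)\,\zeta(\dd z)$ on $[0,\infty)$: whenever $y+\logZ>2\gG$, both the capped value $2\gG$ and the Lindley value $(y+\logZ)_+$ land where $\widetilde{g}$ vanishes. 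Invoking the invariance of $\nu_\lar$ under $L$ given by \eqref{eq:dev_max_ener:fornu}, the left contribution $\int_0^\gG(\widetilde{g}-L\widetilde{g})\,\nu_\lar(\dd y)/C_\gG$ equals $-\int_\gG^\infty(\widetilde{g}-L\widetilde{g})\,\nu_\lar(\dd y)/C_\gG$. A symmetric argument in the mirror coordinate $w=\gG-x$, where the right-edge measure is the invariant measure of the Lindley chain driven by $-\logZ$, reduces the right contribution to an analogous tail integral, after extending $g^{*}(w):=g(\gG-w)$ by the constant $g(-\gG)$ on $[2\gG,\infty)$ (which preserves 1-Lipschitz and makes the corresponding ceiling operator again agree with its pure Lindley counterpart).

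The main obstacle is controlling these two tail integrals with the required precision. A naive pointwise Lipschitz bound $|\widetilde{g}-L\widetilde{g}|\le\bbE|\logZ|$ yields only $O(1)$ after integration against $\nu_\lar$ (whose near-uniform tail supplies a factor $\gG$), too crude by essentially a factor $\gG^{\xi-3}$. The needed precision comes from exploiting the linear asymptotic of $\nu_\lar$ in \eqref{eq:dev_max_ener:asymptlar} together with integration by parts to convert the integrand into tail probabilities of $\logZ$; combined with the symmetric estimate for the right-edge measure and the precise normalization $C_\gG$, this should yield the desired $O(1/\gG^{\xi-2})$ under \textbf{(T)} (respectively $O(\exp(-\gd\gG))$ under \textbf{(T$^\prime$)}). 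The computation closely parallels the one-step invariance estimate carried out in Section~4 of \cite{CGGH25} for the free-energy problem, which is precisely the argument being adapted to the maximal-energy framework here.
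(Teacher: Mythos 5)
Your handling of \eqref{eq:dev_max_ener:Ck} is the same one-line substitution used in the paper, and your decomposition for \eqref{eq:dev_max_ener:Lyap} is a sound variant of the paper's argument (the paper instead applies the integral identity $\calM_\gG[\nu]=\int F_\nu(z-\gG)F_\gz(-z)\,\dd z$ and splits accordingly, but the tail controls are the same). So those two parts are fine.

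The gap is in \eqref{eq:dev_max_ener:onestep}, which is the heart of the proposition, and you explicitly flag it yourself. Your reduction via the Kantorovich dual is sound: with $g(\gG)=0$, extending $\widetilde g$ by $0$ past $2\gG$ indeed makes the capped kernel $\pi_\gG$ agree with the pure Lindley operator $L$ on $[0,2\gG]$, and \eqref{eq:dev_max_ener:fornu} converts the bulk integral $\int_0^\gG(\widetilde g-L\widetilde g)\,\dd\nu_\lar$ into $-\int_\gG^\infty(\widetilde g-L\widetilde g)\,\dd\nu_\lar$. The problem is that this single tail integral is generically of order $1/\gG$ \emph{after} dividing by $C_\gG$: the zero-mean of $\logZ$ kills the $\widetilde g(\gG)$ term, but the next term, roughly $\widetilde g'(\gG)\cdot\mathrm{Var}(\logZ)/C_\gG$, survives and is $\Theta(1/\gG)$, which is strictly worse than the required $O(1/\gG^{\xi-2})$ for $\xi>3$. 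The claimed rate is therefore only reachable after the left tail is \emph{added to} the right-edge tail; a short computation shows that the right contribution produces $-g'(0)\cdot\mathrm{Var}(\logZ)/C_\gG$ at the same order, and since $\widetilde g'(\gG)=g'(0)$ the two $1/\gG$ terms cancel. This cancellation is the actual content of the estimate, and your proposal neither identifies it nor controls the sub-leading errors (the $O(y^{-(\xi-3)})$ correction in \eqref{eq:dev_max_ener:asymptlar}, the contribution of $\{y+\logZ<0\}$, the mismatch between the two edge measures $\nu_\lar$ and $\nu_\rar$). Saying ``integration by parts to convert to tail probabilities of $\logZ$'' and ``combined with the symmetric estimate \ldots this should yield'' is not a proof; the step you call the main obstacle is left unresolved, and it is precisely the non-trivial step.

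For comparison, the paper avoids the issue by working with the $L^1$ distance between cumulative functions rather than the Kantorovich dual, and by exploiting the self-consistency $F_\lar(y)=\int F_\lar(y-z)\,\gz(\dd z)$ on $y\ge 0$ to rewrite the one-step defect on $[-\gG,0]$ as $\int_{-\gG}^0\bigl|\int_{-\infty}^x\bigl(\frac{F_\rar(\gG-x+z)+F_\lar(\gG+x-z)}{C_\gG}-1\bigr)\gz(\dd z)\bigr|\dd x$. In that form the cancellation appears pointwise and explicitly — $F_\lar(\gG+y)+F_\rar(\gG-y)-C_\gG=O(\gG^{-(\xi-3)})$ on $[0,\gG/2]$ — and the residual tail is disposed of by a Markov bound. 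If you want to carry through the dual route you sketched, you would need to write down the two tail integrals over the same variable, extract and cancel the $\Theta(1/\gG)$ leading terms, and then estimate the remainder using the quantitative asymptotics of $\nu_\lar$ and $\nu_\rar$ plus the tail assumption; as it stands, those steps are missing.
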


\medskip
Before proving Proposition \ref{thm:dev_max_ener:onestep}, let us denote by $\ast$ the convolution product: if $f$ is a measurable fonction on $\R$,
\begin{equation}
(f\ast \gz) (x) := \int_\R f(x-z)  \gz(\dd z)\, .
\end{equation}
and let us observe that an iteration of the chain $(X_n)_n$ is encoded in terms of cumultative mass function as:
\begin{equation}
\label{eq:dev_max_ener:iter_F_X}
F_{X_{n+1}}(x)\, =\, 
 \begin{cases}
  0 & \text{if } x < -\gG \, \\
 (F_{X_n} \ast \gz) (x) & \text{if } -\gG \le x \le \gG \, \\
 1 & \text{if } x \ge \gG \, ,
 \end{cases}
\end{equation}
while an iteration of the chain $(Y_n)_n$ is encoded by:  
\begin{equation}
\label{eq:dev_max_ener:Fn-onesided}
F_{Y_{n+1}}(x)\, =\, 
 \begin{cases}
 (F_{Y_n} \ast \gz) (x) & \text{if } x\ge 0\\
 0 & \text{otherwise}.
 \end{cases}
\end{equation}

\begin{proof} [Proof of Proposition \ref{thm:dev_max_ener:onestep}]
\eqref{eq:dev_max_ener:Ck}  follows readily from \eqref{eq:dev_max_ener:asymptlar} and its analogue for $F_\rar$.

Now for the proof of \eqref{eq:dev_max_ener:onestep}.
We refer to assumption  ``{\bf (T)} for $\pm \logZ$'' as ``$\logZ\in \bbL^\xi$'' and to ``{\bf (T$^\prime$)} for $\pm \logZ$''
as ``exponential tails assumption'' and, when the assumption is omitted, $\logZ\in \bbL^\xi$ is assumed. Let us recall a useful identity concerning the Wasserstein-1 distance between two measures $\mu$ and $\nu$ on $\R$:
\begin{equation}
d(\mu, \nu)= \int_\R \left|F_\mu(x) - F_\nu(x) \right|  \dd x = \| F_\mu - F_\nu \|_1 \, ,
\end{equation} 
with $\| \cdot \|_1$ denoting the usual $L^1$-distance. 
This yields:
\begin{equation}
\label{eq:dev_max_ener:2addenda-2}
\begin{aligned}
d(\gamma_\gG, \gamma_\gG \pi_\gG ) 
& =
\left\Vert F_{\gamma_\gG}  - F_{\gamma_\gG \pi_\gG}  \right\Vert_1\, \\
&=\, 
\left\Vert ( F_{\gamma_\gG}  - F_{\gamma_\gG \pi_\gG}  )\ind_{(-\infty,0)}\right\Vert_1 + \left\Vert ( F_{\gamma_\gG}  - F_{\gamma_\gG \pi_\gG}  )\ind_{(0, \infty)}\right\Vert_1
\\
&=\, 
\left\Vert ( F_{\gamma_\gG}  - F_{\gamma_\gG \pi_\gG}  )\ind_{(-\infty,0)}\right \Vert_1 + \left\Vert ( G_{\gamma_\gG}  - G_{\gamma_\gG \pi_\gG}  )\ind_{(0, \infty)}\right \Vert_1
\, .
\end{aligned}
\end{equation}
where $G_\mu$ denotes the risk function of the measure $\mu$: $G_\mu=1-F_\mu$.
If $\zeta$ is symmetric the two terms coincide; in general, they do not coincide but they can be treated 
in the same way because one can be obtained from the other by replacing $\logZ$ with $-\logZ$.  
So we focus on the first one, which is equal to
\begin{equation}
	A_{\gG ,1}\, :=\,\int_{-\gG}^0 \left \vert   F_\gG(x) - \int_\R  F_\gG( x-z)  \gz( \dd z) \right\vert  \dd x\, , 
\end{equation}
with the abbreviation $F_\gG (\cdot) = F_{\gamma_\gG} (\cdot)$, given in 
\eqref{eq:dev_max_ener:DHprobability_F}. Now, we use the fact that for $-\gG \le x\le 0$
\begin{equation}
F_\gG(x)  = \frac{ F_\lar(x+\gG ) }{C_\gG} = \frac{1}{C_\gG} \int_{\R} F_\lar(x+\gG -z )  \gz(\dd z)\, ,
\end{equation}
(by definition of $F_\lar$), so
\begin{equation}
A_{\gG ,1}\, :=\, 
\int_{-\gG}^0  
\left \vert 
\int_{-\infty}^x
\frac{F_\rar(\gG -x+z) + F_\lar(\gG +x-z) }{ C_\gG } -1
\quad \gz(\dd z) 
\right\vert 
\dd x\, .
\end{equation}

Now, by \eqref{eq:dev_max_ener:asymptlar} and \eqref{eq:dev_max_ener:Ck},  we have
\begin{equation}
 \left \vert \frac{F_\rar(\gG -y) + F_\lar(\gG +y) }{ C_\gG } -1 \right \vert \,\le\, C \times  \begin{cases}
	\gG ^{-\xi+2} 
	& \text{ if } y\in [0, \gG /2] \, ,
\\
y/\gG  & \text{ if } y >\gG  /2\, ,
\end{cases}
\end{equation}
for suitably chosen $C>0$ and for $\gG $ sufficiently large: in the exponential case the first line of the estimate may be replaced by $\exp(-\gd \gG/2)/\gG$. So 
 $A_{\gG ,1} $ is bounded above by a constant times 
\begin{equation}
	\label{eq:dev_max_ener:interm2.k}
	\begin{split}
		&
		\gG ^{-\xi+2}
		\int_{-\gG}^0 \int_{x-\gG/2}^x 1 \zeta(\dd z) \dd x
		+
		\gG^{-1} 
		\int_{-\gG}^0 \int_{-\infty}^{x-\gG/2} (x-z) \zeta(\dd z) \dd x
		\\ &
		\le
		\gG ^{-\xi+2}
		\int_{-\infty}^0 F_\zeta(x) \dd x
		+\int_{-\infty}^{-\gG/2} (-z) \zeta(\dd z) \, .
	\end{split}
\end{equation}
Now, the quantity $\int_{-\infty}^0 F_\zeta(x) \dd x$ is finite since $\gz$ has a second moment, and the Markov inequality implies, in the polynomial case, that 
$\int_{-\infty}^{-\gG/2} (-z) \zeta(\dd z) =O(\gG^{-\xi+1})$ 
and, in the exponential case, that
 \begin{equation}
\int_{-\infty}^{-\gG/2} (-z) \zeta(\dd z) \le \exp(-c\, \frac{\gG}{2})
\end{equation}
for some $c>0$, so
\begin{equation}
	A_{\gG ,1}
	=\begin{cases}
	O\left( \gG ^{-\xi+2} \right) & \text{ if } \logZ \in \bbL^\xi, \\
	O(\gG \exp(-(\gd \wedge c) \gG/2)) & \text{ under exponential tails assumption}.
	\end{cases}
\end{equation}

Finally, let us prove \eqref{eq:dev_max_ener:Lyap}.

First, we rewrite $\calM_\gG[\nu]$ for $\nu$ a measure on $\R$. For every $x\in \R$
\begin{equation}
\begin{aligned}
\int_\R \max(0, -x-\gG-z) \gz (\dd z) 
& = \int_{-\infty}^{-x-\gG} (-x-\gG-z)  \gz (\dd z) \\
& = [ (-x-\gG-z) F_\gz(z)]_{-\infty}^{-x-\gG} + \int_{-\infty}^{-x-\gG} F_\gz(z) \dd z\, ,
\end{aligned}
\end{equation}
hence
\begin{equation}\label{eq:dev_max_ener:rewrite_L_IPP}
\calM_\gG[\nu]= \int_\R F_\nu(-z-\gG) F_\gz(z) \dd z =  \int_\R F_\nu(z-\gG) F_\gz(-z) \dd z\, .
\end{equation}

We use this identity for $\nu=\nu_\gG$ and we rewrite $\calM_\gG[\gamma_\gG]$ as:
\begin{equation}
\begin{aligned}
\calM_\gG[\gamma_\gG]
 & = \frac{1}{C_\gG} \int_{-\infty}^\gG F_\lar (z) F_\xi(-z) \dd z + \int_{\gG}^\infty F_{\gamma_\gG} (z) F_\xi(-z) \dd z \\
 & = \frac{1}{C_\gG} \int_{\R} F_\lar (z) F_\xi(-z) \dd z  - \frac{1}{C_\gG} \int_{\gG}^\infty F_\lar (z) F_\xi(-z) \dd z + \int_{\gG}^\infty F_{\gamma_\gG} (z) F_\xi(-z) \dd z \, . 
\end{aligned}
\end{equation}
Using the formula \eqref{eq:dev_max_ener:rewrite_L_IPP} for $\nu=\nu_\lar$, we readily see that the first term in this expression is equal to $\frac {2\widehat{\kappa}_1}{C_\gG }$. Hence there only remains to bound the two other terms.
Using the bound $F_{\gamma_\gG} (z) \le 1$, the third term is upper bounded by $\int_{\gG}^\infty   F_\gz(-z) \dd z $. If $\gz$ has a $\xi$-th moment, then the Markov property yields that $F_\gz(-z) = O(z^{-\xi})$ as $x\to\infty$, thus $\int_{\gG}^\infty   F_\gz(-z) \dd z = O(\gG^{-\xi+1})$. In the exponential case, we get instead the bound $\int_{\gG}^\infty   F_\gz(-z) \dd z = O(e^{-c\gG})$.

Now, let us bound $\int_\gG^\infty F_\lar (z) F_\xi(-z) \dd z$. We use the existence of a constant $C$ such that for large $z$, $ F_\lar (z)\le C z$, which yields that for large $\gG$, $\int_\gG^\infty F_\lar (z) F_\xi(-z) \dd z$ is bounded by a constant times
\begin{equation}
\int_\gG^\infty z F_\xi(-z) \dd z
\end{equation}
Proceeding as before, we bound this quantity by $O(\gG^{-\xi +2})$ in the polynomial case and by $O(\gG e^{-c \gG})$ in the exponential case.
\end{proof}

\subsection{Proof of Theorem \ref{th:dev_calM}}


With Proposition \ref{thm:dev_max_ener:onestep} at hand, it is immediate to derive the proof of Theorem \ref{th:dev_calM}.

\begin{proof}[Proof of Theorem \ref{th:dev_calM}]
Using the observation in the beginning of Section  \ref{sec:dev_max_ener:contraction} and Corollary \ref{thm:dev_max_ener:contraction_cor},
we have
\begin{equation}
|\calM_\gG[\nu_\gG] - \calM_\gG[\gamma_\gG]| \le d(\nu_\gG, \gamma_\gG) \le C'' \gG^2 d(\gamma_\gG, \gamma_\gG \pi_\gG) \,,
\end{equation}
We conclude the proof using the estimates given in Proposition \ref{thm:dev_max_ener:onestep}. We get Theorem \ref{th:dev_calM} with constants $\widehat{\kappa_1}$ in the numerator and $\widehat{\kappa_2}$ in the denominator. We use Theorem \ref{th:first_order_calM} to identify $\widehat{\kappa}_1$ with $\vartheta^2$. Concerning $\widehat{\kappa_2}$ (given by \eqref{eq:dev_max_ener:kappa2}, see also \eqref{eq:value_ratio_d/c}), rewritting it in terms of the random walk $S$ instead of $T$ yields \eqref{eq:expr_widehat_kappa}.
\end{proof}

\begin{remark}
The constant $\widehat{\kappa}$ is given in \eqref{eq:expr_widehat_kappa} in terms of strictly ascending record heights. Introducing 
\begin{equation}
	\rho_\lar := \inf\{n\ge 1: S_n\ge 0\}  \qquad \text{and} \qquad \rho_\rar := \inf\{n\ge 1: S_n\le 0\}\, ,
\end{equation}
we have the alternate expression
\begin{equation}\label{eq:widehatkappa_alternate_expre}
	\widehat{\kappa}= \frac{1}{2} 
	\left(
	\frac{\E\left[ (S_{\rho_{\lar}})^2\right]}{\E\left[ S_{\rho_{\lar}} \right]}
	+ \frac{\E\left[ (S_{\rho_{\rar}})^2\right]}{\E\left[ -S_{\rho_{\rar}} \, .\right]}
	\right)
\end{equation}
Indeed, we observe that the strictly ascending excursion $(S_n)_{0\le n\le \alpha_\lar}$ can be decomposed into a succession of weakly ascending excursions: a (geometric) number of excursions with overshoot zero and a final excursion conditioned on having a positive overshoot. Therefore:
\begin{equation}
	\E\left[ S_{\alpha_\lar} \right] 
	= \E\left[ S_{\rho_\lar} | S_{\rho_\lar}>0 \right] 
	= \frac{\E\left[ S_{\rho_\lar}\right]}{\p\left[S_{\rho_\lar}>0\right]} \,  , 
\end{equation}
and 
\begin{equation}
	\E\left[ \left(S_{\alpha_\lar} \right)^2 \right] 
	= \E\left[ \left(S_{\rho_\lar} \right)^2  | S_{\rho_\lar}>0 \right] 
	= \frac{\E\left[ \left(S_{\rho_\lar} \right)^2 \right]}{\p\left[S_{\rho_\lar}>0\right]}\, . 
\end{equation}
We reason similarly for $S_{\alpha_\rar}$ 
and deduce \eqref{eq:widehatkappa_alternate_expre}.
\end{remark}






\section*{Acknowledgements}
The author thanks Yueyun Hu for a useful discussion on the proof of Lemma \ref{th:lem_Elog_fine}.

\bigskip

This research was funded in part by the Austrian Science Fund (FWF) 10.55776/F1002. For open access purposes, the author has applied a CC BY public copyright license to any author accepted manuscript version arising from this submission.

\bibliographystyle{plainnat}
\bibliography{bibliography}


\end{document}